\newcommand{\cA}{\mathcal{A}}
\newcommand{\cH}{\mathcal{H}}
\newcommand{\cD}{\mathcal{D}}
\newcommand{\cF}{\mathcal{F}}
\newcommand{\cE}{\mathcal{E}}
\newcommand{\cM}{\mathcal{M}}
\newcommand{\cO}{\mathcal{O}}
\newcommand{\cR}{\mathcal{R}}
\newcommand{\cS}{\mathcal{S}}
\newcommand{\fa}{\mathfrak{a}}
\newcommand{\fb}{\mathfrak{b}}
\newcommand{\fg}{\mathfrak{g}}
\newcommand{\fk}{\mathfrak{k}}
\newcommand{\fm}{\mathfrak{m}}
\newcommand{\fn}{\mathfrak{n}}
\newcommand{\fp}{\mathfrak{p}}
\newcommand{\fq}{\mathfrak{q}}
\newcommand{\fs}{\mathfrak{s}}
\newcommand{\fu}{\mathfrak{u}}
\newcommand{\fz}{\mathfrak{z}}
\newcommand{\SU}{\mathrm{SU}}
\newcommand{\Sp}{\mathrm{Sp}}
\newcommand{\cU}{\mathcal{U}}
\newcommand{\bs}{\mathbf{S}}
\newcommand{\bS}{\mathbf{S}}
\newcommand{\Ci}{C^\infty}
\newcommand{\Cic}{C^\infty_c}
\newcommand{\Hr}{ \mathcal{PW}_r }
\newcommand{\pwn}{q_N}
\newcommand{\Dr}{\DS_r}
\newcommand{\HrH}{ \mathcal{PW}^{\HS}_r }
\newcommand{\DrH}{\DS^{\HS}_r}
\newcommand{\PWR}{\widetilde{\PW}_{H,R}^{\Z_2}}
\newcommand{\PWRP}{\OS_{R+|r|} \left( \C \times \snc \right)}
\newcommand{\PWN}{\pi_{N}}
\newcommand{\SW}{\mathcal{S}}
\newcommand{\DRH}{\cD_{H,R}(\Xi)}
\newcommand{\PW}{\mathcal{PW}}
\newcommand{\Z}{\mathbb{Z}}
\newcommand{\R}{\mathbb{R}}
\newcommand{\Rn}{\mathbb{R}^n}
\newcommand{\C}{\mathbb{C}}
\newcommand{\Cn}{\mathbb{C}^n}
\newcommand{\N}{\mathbb{N}}
\newcommand{\nb}{\mathbb{\beta}}
\newcommand{\re}{\mathrm{Re}}
\newcommand{\im}{\mathrm{Im}}
\newcommand{\HS}{\mathcal{H}}
\newcommand{\DS}{\mathcal{D}}
\newcommand{\OS}{\mathcal{O}}
\newcommand{\sn}{S^{n-1}}
\newcommand{\snc}{S^{n-1}_{\mathbb{C}}}
\newcommand{\ftc}{\FT_{\R}^c}
\newcommand{\FT}{\mathcal{F}}
\newcommand{\FTrn}{\FT_{\Rn}}
\newcommand{\FTrnc}{\FT^c_{\Rn}}
\newcommand{\RT}{\mathcal{R}}
\newcommand{\Vol}{\mathrm{Vol}}
\newcommand{\id}{\mathrm{id}}
\newcommand{\FTc}{\mathcal{F}^c}
\renewcommand{\:}{\, : \,}
\newcommand{\pr}{\mathrm{pr}}
\newcommand{\SO}{\mathrm{SO}}
\newcommand{\SOn}{\mathrm{SO(n)}}
\newcommand{\GL}{\mathrm{GL}}
\newcommand{\Tr}{\mathrm{Tr}}
\newcommand{\ad}{\mathrm{ad}}
\newcommand{\rI}{\mathrm{I}}
\numberwithin{equation}{section}
\newtheorem{thm}{Theorem}[section]
\newtheorem{lem}[thm]{Lemma}
\newtheorem{coro}[thm]{Corollary}
\newenvironment{Proof}[1][Proof.]{\begin{trivlist} \item[\hskip \labelsep {\bfseries #1}]}{\end{trivlist}}
\theoremstyle{definition}		% use "definition-style" font for the rest
\newtheorem{rem}[thm]{Remark}
\numberwithin{equation}{section}
\newcommand{\note}[2][\null]{%
 \marginpar{
   \renewcommand{\baselinestretch}{.5}
   \vspace{-1em}\hrule\vspace{3pt}%
   \footnotesize\raggedright\textsf{#2\ifx#1\null\else\\\hfill---
     {\em #1}\fi}\vspace{1.5em}
 }%
}
\def\sideremark#1{\ifvmode\leavevmode\fi\vadjust{\vbox to0pt{\vss
 \hbox to 0pt{\hskip\hsize\hskip1em
\vbox{\hsize2cm\small\raggedright\pretolerance10000
 \noindent #1\hfill}\hss}\vbox to8pt{\vfil}\vss}}}
\newcommand{\K}{\mathbf{K}}
\renewcommand{\H}{\mathbf{H}}
\begin{document}
\title{Paley-Wiener Theorems with respect to the spectral parameter}
\author{Susanna Dann}
\address{Mathematics Department\\
Louisiana State University\\
Baton Rouge, Louisiana}
\email{sdann@math.lsu.edu}
\author{Gestur {\'O}lafsson}
\address{Mathematics Department\\
Louisiana State University\\
Baton Rouge, Louisiana}
\email{olafsson@math.lsu.edu}
\thanks{The research of S. Dann and G. \'Olafsson was supported by NSF grant
DMS-0801010}
%\subjclass{05B35}
%\date{}

\subjclass[2000]{43A85,  22E46}
\keywords{Gelfand pairs; Euclidean motion group; Spherical Fourier transform; Paley-Wiener theorem; Limits of symmetric spaces}

\begin{abstract}
One of the important questions related to any integral transform on a manifold $\cM$ or on a homogeneous space $G/K$ is the description of the image of a given space of functions. If $\cM=G/K$, where $(G,K)$ is a Gelfand pair, then the harmonic analysis is closely related to the representations of $G$ and the direct integral decomposition of $L^2(M)$ into irreducible representations. We give a short overview of the Fourier transform on such spaces and then ask if one can describe the image of the space of smooth compactly supported functions in terms of the spectral parameter, i.e., the parameterization of the set of irreducible representations in the support of the Plancherel measure for $L^2(M)$. We then discuss the Euclidean motion group, semisimple symmetric spaces, and some limits of those spaces.
\end{abstract}

\maketitle
\section*{Introduction}
\noindent
The aim of this article is to discuss Paley-Wiener type theorems in diffe\-rent settings. Here we understand the term ``Paley-Wiener type theorems'' to mean the following problem: Given a manifold $\cM=G/H$, where $G$ is a Lie group and $H$ a closed subgroup of $G$, and given a Fourier type transform on $\cM$, characterize the image of a given function space on $\cM$.
More often than not, those are spaces of smooth compactly supported functions. Similar statements for square-integrable functions are called \textit{Plancherel theorems}. The ``classical'' Paley-Wiener theorem  identifies the space of smooth, respectively square-integrable, compactly supported functions on $\Rn$ with certain classes of holomorphic functions on $\C^n$ of exponential growth, where the exponent is determined by the size of the support. Similar statements are also true for distributions.

Normalize the Fourier transform on $\Rn$ by
\[\widehat{f}(\lambda )=\cF_{\Rn}(f)(\lambda )=\int f(x)e^{-2\pi ix\cdot \lambda }\, dx\, .\]
Then a function or a distribution $f$ is supported in a ball of radius $r>0$ centered at the origin if and only if $\widehat{f}$ extends to a holomorphic function on $\C^n$ such that for some constant $C>0$
$$ \left| \widehat{f}(\lambda) \right| \le
\left\{ \begin{array}{lll}
					C (1+|\lambda |^2)^{-N}e^{2\pi r|\im \lambda|} &\text{ for all } N\in \N\, &\text{ if  } f \text{ is smooth,} \\
          C (1+|\lambda |^2)^N e^{2\pi r | \im \lambda |} &\text{ for some } N\in \N &\text{ if } f \text{ is a distribution}.
\end{array} \right.$$
For $ f\in L^2(\Rn)$ along with the condition $|\widehat{f}(\lambda)| \le C e^{2\pi r|\im \lambda|} $ one has to assume that $\widehat{f}|_{\Rn}\in L^2(\Rn)$, see \cite{rudin87} p. 375. We shall often refer to the following Paley-Wiener space. Let $r>0$. Denote by $\PW_{r}(\Cn)$ the space of entire functions $H$ on $\Cn$ such that $z^m H(z)$ is of exponential type $\leq r$ for every $m \in \N^n$. The vector space $\PW_{r}(\Cn)$ is topologized by the family of seminorms: $$ \pwn(H) := \sup\limits_{z \in \Cn} (1+|z|^2)^N e^{-r |\im (z)|} |H(z)| $$ with $N \in \N$. In this case one can turn the above bijection statement into a topological statement. Let $\Dr(\Rn)$ denote the space of smooth functions supported in a ball of radius $r$ centered at the origin and equipped with the Schwartz topology. Then the following is true

\begin{thm}(Classical Paley-Wiener theorem)
The Fourier transform $\FTrn$ extends to a linear topological isomorphism of $\Dr(\Rn)$ onto $\PW_{2\pi r}(\Cn)$ for any $r>0$.
\end{thm}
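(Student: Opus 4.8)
The plan is to check three things in turn: that $\FTrn$ carries $\Dr(\Rn)$ continuously into $\PW_{2\pi r}(\Cn)$, that it is a bijection onto that space, and that its inverse is continuous; the last point will be obtained formally from the open mapping theorem once both spaces are recognised as Fréchet spaces.

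\emph{The transform lands in $\PW_{2\pi r}(\Cn)$, continuously.} For $f\in\Dr(\Rn)$ define $\widehat f(\zeta)=\int f(x)e^{-2\pi i x\cdot\zeta}\,dx$ for $\zeta\in\Cn$; differentiation under the integral sign (permitted since $f$ has compact support) shows that $\widehat f$ is entire and restricts to $\FTrn f$ on $\Rn$. Integrating by parts $|m|$ times gives $(2\pi i\zeta)^m\widehat f(\zeta)=\int(\partial^m f)(x)e^{-2\pi i x\cdot\zeta}\,dx$ for every multi-index $m$, and on the support of $f$ one has $|x|\le r$, so $|e^{-2\pi i x\cdot\zeta}|=e^{2\pi x\cdot\im\zeta}\le e^{2\pi r|\im\zeta|}$. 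Hence $|\zeta^m\widehat f(\zeta)|\le(2\pi)^{-|m|}\|\partial^m f\|_{L^1}\,e^{2\pi r|\im\zeta|}$, which says precisely that $\widehat f\in\PW_{2\pi r}(\Cn)$. Writing $(1+|\zeta|^2)^N$, via the multinomial theorem, as a nonnegative combination of the monomials $|\zeta^m|$ with $|m|\le 2N$, the previous bounds give $\pwn(\widehat f)\le C_N\sum_{|m|\le 2N}\|\partial^m f\|_{L^1}$; since $f$ is supported in a fixed ball, the right-hand side is bounded by a continuous seminorm on $\Dr(\Rn)$, so $\FTrn\colon\Dr(\Rn)\to\PW_{2\pi r}(\Cn)$ is continuous.

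\emph{Bijectivity.} Injectivity follows from Fourier inversion on $\Rn$. For surjectivity, let $H\in\PW_{2\pi r}(\Cn)$. Finiteness of the seminorms $\pwn(H)$ makes $H|_{\Rn}$ decay rapidly, and Cauchy estimates on unit polydiscs give the same for all its derivatives, so $H|_{\Rn}\in\mathcal{S}(\Rn)$; put $f(x):=\int_{\Rn}H(\xi)e^{2\pi i x\cdot\xi}\,d\xi$, a Schwartz function with $\FTrn f=H|_{\Rn}$ by Fourier inversion. To see that $f$ vanishes outside the closed ball of radius $r$, fix $\eta\in\Rn$ and shift the contour one coordinate at a time: Cauchy's theorem applies because for $N$ large the rapid decay encoded in $\pwn(H)$ kills the contributions of the vertical segments, giving $f(x)=\int_{\Rn}H(\xi+i\eta)e^{2\pi i x\cdot(\xi+i\eta)}\,d\xi$. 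Estimating with $|H(\xi+i\eta)|\le\pwn(H)(1+|\xi|^2)^{-N}e^{2\pi r|\eta|}$ and $N>n/2$ yields $|f(x)|\le C\,e^{2\pi(r|\eta|-x\cdot\eta)}$; taking $\eta=t\,x/|x|$ and letting $t\to\infty$ forces $f(x)=0$ for $|x|>r$. Thus $f\in\Dr(\Rn)$, and $\FTrn f$ --- now the Fourier transform of a compactly supported smooth function --- extends to an entire function agreeing with $H$ on $\Rn$, whence $\FTrn f=H$ on $\Cn$ by the identity theorem.

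\emph{Topology.} Both $\Dr(\Rn)$ and $\PW_{2\pi r}(\Cn)$ are Fréchet spaces: the first is a closed subspace of the Schwartz space, and the second is metrizable by the countable separating family $\{\pwn\}$ and complete, since a sequence Cauchy in every $\pwn$ converges locally uniformly to an entire function inheriting all the bounds. The open mapping theorem then upgrades the continuous bijection $\FTrn$ to a topological isomorphism. (Alternatively, one verifies directly that every Schwartz seminorm of $f=\FTrn^{-1}H$ is dominated by some $\pwn(H)$, using once more that $f$ is supported in a fixed ball to absorb the polynomial weight $x^\alpha$.) The only genuinely delicate step is the contour shift used to localise the support of $f$: that is where holomorphy and the sharp exponential constant $2\pi r$ are actually exploited, whereas the rest reduces to elementary estimates and soft functional analysis.
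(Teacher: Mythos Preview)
Your argument is correct and is the standard direct proof (integration by parts for the forward estimate, contour shift for the support condition, open mapping for the topology). There is nothing to compare against, however: the paper does not prove this theorem. It is stated in the introduction as a classical fact, attributed in a footnote to H\"ormander, and then \emph{used} as an ingredient in the proof of the Hilbert-space-valued version (Theorem~\ref{secondTh}). The remark following Theorem~\ref{secondTh} notes that the vector-valued result can be proved without appealing to the scalar case, so that it genuinely generalizes it, but the paper does not spell this out. In short, the paper treats the classical Paley--Wiener theorem as background; your write-up supplies exactly the proof the paper omits, by the expected route.
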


Yet $\Rn$ can also be represented as a homogeneous space: $\Rn \simeq G/\SO(n)$ with the orientation preserving Euclidean motion group $G=\Rn \rtimes \SO(n)$. This realization comes with its own natural Fourier transform derived from the representation theory of $G$, see \cite{OlafssonSchlichtkrullRT08} and Section \ref{se-EmotGr}. One can again give a description of those spaces, and in fact we will give two such descriptions. The descriptions are given in terms of the parameter in the decomposition of $L^2(\Rn)$ into irreducible representations of $G$ as well as some homogeneity conditions.

More precisely, take the example $\cM=\Rn$. Consider functions on $\Rn$ as even functions on $\R\times\sn$ and take the Fourier transform in the first variable. Then the image of compactly supported smooth functions with an additional homogeneity condition are functions holomorphic in both variables - the radial and spherical directions - which are times any polynomial of exponential growth, and have a homogeneous power series expansion in the radial and spherical variables when the latter is restricted to be real. We will provide proofs for several topological Paley-Wiener type theorems in the Euclidean setting as well as discuss Paley-Wiener type theorems in the general setting of symmetric spaces of compact and noncompact type.

This article is organized as follows. In Section \ref{SectionBasicNotation} we introduce the basic notation. In section \ref{VecValPW} we prove a Paley-Wiener type theorem for Hilbert space valued functions. Next we recall the definition of a Gelfand pair $(G,K)$ and the Fourier transform on the associated commutative space $G/K$ in section \ref{SectionGelfandPairs}. A concrete example can be found in section \ref{se-EmotGr}, where $\Rn$ is considered is a homogeneous space. In section \ref{SectionEuclideanPW} we prove a topological analog of a Paley-Wiener type theorem due to Helgason, which is also stated at the beginning of that section. Then we prove that the Fourier transform extends to a bigger space, namely $\C\times\snc$, where $\snc$ stands for the complexified sphere. We also describe the image of the Schwartz space.

In sections \ref{SectionSSSofNT} and \ref{SectionSSSofCT} we discuss the case of Riemannian symmetric spaces of noncompact and compact type.  Recent results \cite{OlafssonWolf2009} on the inductive limit of symmetric spaces are reviewed in the last section \ref{SectionILofSS}.

%%%%%%%%%%%%%%%%%%%%%%%%%%%%%%%%%%%%%%%%%%%%%%%%%%%%%%%%%%%%%%%%%%%%%%%%%%%%%%%%%%%%%%%%%%%%%%%%%%%%%%%%%%%%%%%%%%%%%%%%%%%%%%%%%%%%%%%%%%%%%%%%%%%%%%%%%%%  Section: Basic Notation %%%%%%%%%%%%%%%%%%%%%%%%%%%%%%%%%%%%%%%%%%%%%%%%%%%%%%%%%%%%%%%%%%%%%%%%%%%%%%%%%%% %%%%%%%%%%%%%%%%%%%%%%%%%%%%%%%%%%%%%%%%%%%%%%%%%%%%%%%%%%%%%%%%%%%%%%%%%%%%%%%%%%%%%%%%%%%%%%%%%%%%%%%%%%%%%%%%%%%%%%%%%%%%%%%%%%%%
\section{Basic Notation}\label{SectionBasicNotation}
\noindent
In this section we recall some standard notation that will be used in this article. We will use the notation from the introduction without further comments. We refer to \cite{helgason65} for proofs and further discussion.

Let $\Rn$ and $\Cn$ denote the usual n-dimensional real and complex Euclidean spaces respectively. For $z=(z_1, \dots, z_n) \in \Cn$, the norm $|z|$ of $z$ is defined by $|z|:=(|z_1|^2 + \cdots + |z_n|^2)^{1/2}$. The canonical inner-product of two vectors $x$ and $y$ on $\Rn$ or $\Cn$ is denoted by $x \cdot y$. The inner-product on $\Rn$ extends to a $\C$-bilinear form $(z, \xi) := \sum\limits_{i=1}^n z_i \xi_i $ on $\Cn \times \Cn$. Let $\N$ be the set of natural numbers including 0. For $j=1,\ldots ,n$, let $\partial_j=\partial /\partial z_j$. For any multi-index $m=(m_1, \ldots , m_n) \in \N^n$ and $z \in \Cn$, put $|m|:=m_1+ \cdots + m_n$, $z^m:=z_1^{m_1} \cdots z_n^{m_n}$, and $D^m:=\partial_1^{m_1} \cdots \partial_n^{m_n} $.

Let $\cM$ be a (smooth) manifold of dimension n. For an open subset $\Omega$ of $\cM$, let $\Ci (\Omega )$ and $\Cic (\Omega)$ denote the spaces of smooth complex valued functions on $\Omega$ and smooth complex valued functions with compact support on $\Omega$, respectively. For each compact subset $K$ of $\Omega$, define a seminorm $|\cdot|_{K, \alpha}$ on $\Ci (\Omega )$ by
$$ |f|_{K, \alpha}:= \max\limits_{p \in K} |D^{\alpha} f(p)|, $$
with $\alpha \in \N^n$. The vector space $\Ci (\Omega )$ equipped with the topology defined by these seminorms becomes a locally convex topological vector space and is denoted by $\cE (\Omega)$. For each compact subset $K$ of $\Omega$, let $\cD_K(\Omega)$ be the subspace of $\Ci (\Omega )$ consisting of functions $f$ with supp$(f) \subseteq K$. The topology on $\cD_K(\Omega)$ is the relative topology of $\Ci (\Omega )$. The Schwartz topology on $\Cic (\Omega)$ is the inductive limit topology of the subspaces $\cD_K(\Omega)$ with $K \subseteq \Omega$. The space $\Cic (\Omega)$ with the Schwartz topology is denoted by $\cD (\Omega )$. If $\cM$ is a Riemannian manifold and $x_0\in\cM$ is a fixed base point, then $\cD_r(\cM)$ stands for the subspace of $\cD (\cM)$ of functions supported in a closed ball of radius $r>0$ centered at $x_0$. Similar notation will be used for other function spaces. The space of smooth rapidly decreasing functions on $\Rn$, the \textit{Schwartz functions}, will be denoted by $\SW(\Rn)$. It is topologized by the seminorms
\[|f|_{N,\alpha} := \sup\limits_{x\in\Rn} (1+|x|^2)^N |D^{\alpha} f(x)|\, , \quad N \in \N \text{ and } \alpha \in  \N^n\, .\]
The Fourier transform is a topological isomorphism of $\SW (\Rn)$ onto itself with the inverse given by $\FTrn^{-1} (g)(x)=\FTrn (g)(-x)$. It extends to a unitary isomorphism of order four of the Hilbert space $L^2(\Rn)$ with itself.

Denote by $\sn$ the unit sphere in $\Rn$ and by $d\omega$ the surface measure on $\sn$. We will sometimes use the normalized measure $\mu_n$ which is given by $\sigma_n\mu_n=d\omega$, where $\sigma_n:=2 \pi^{n/2}/ \Gamma(n/2)$ for $n \geq 2$. For $p\in \R$ and $\omega\in\sn$ denote by $\xi (p,\omega )=\{x\in\Rn\: x\cdot \omega =p\}$ the hyperplane with the normal vector $\omega$ at signed distance $p$ from the origin. Denote by $\Xi$ the set of hyperplanes in $\Rn$. Then, as $\xi (r,\omega )=\xi (s,\sigma)$ if and only if $(r,\omega)=(\pm s,\pm \sigma)$, it follows that $\R\times\sn\ni (r,\omega)\mapsto \xi( r,\omega)\in \Xi$ is a double covering of $\Xi$. We identify functions on $\Xi$ with the corresponding even functions on $\R\times \sn$, i.e., $f(r,\omega )=f(-r,-\omega)$. The \textit{Radon transform} $\cR f$ of a function $f \in \Cic(\Rn) $ is defined by
\[\mathcal{R}f(\xi) := \int_{\xi} f(x) dm(x)\, , \]
where $dm$ is the Lebesgue measure on the hyperplane $\xi$. Then $\cR f\in \Cic (\Xi)$. Moreover, $\mathcal{R}$ is continuous from $L^1(\Rn)$ to $L^1(\R\times\sn)$ and its restriction from $\SW(\Rn)$ into $\SW(\R\times\sn)$ is
continuous \cite{hertle83}, where $\SW(\R\times\sn)$ is the space of smooth functions $\varphi$ on $\R\times\sn$ satisfying that for any $ k,m \in \N$ and for any differential operator $D_{\omega}$ on $\sn$
$$ \sup\limits_{(r,\omega)\in\R\times \sn} (1 + r^2)^k \left| \partial_r^m(D_{\omega} \varphi)(r,\omega) \right|<\infty.$$

The Radon transform is related to the Fourier transform by the \textit{Fourier-Slice Theorem}
\begin{equation}\label{eq-FST}
\widehat f(r\omega )= \cF_{\R}(\cR f)(r,\omega)\, ,
\end{equation}
where the Fourier transform is taken in the first variable.

Denote by $\cS_H (\Xi)$ the space of smooth functions $f: \R\times \sn\to \C$ such that
\begin{enumerate}
\item $f$ is even, i.e. $f(r,\omega )=f(-r,-\omega)$;
\item $\displaystyle{\eta_{k,m,D_{\omega}}(f):=\sup_{(r,\omega)\in\R\times \sn}(1+r^2)^k\left|\partial_r^m D_{\omega} f(r,\omega )\right|<\infty}$ for all $k,m\in\N$ and for any $D_{\omega}$ a differential operator on $\sn$;
\item For each $k\in \N$, the function $\omega \mapsto \int_{-\infty}^\infty f(r,\omega )r^k\, dr$ is a homogeneous polynomial of degree $k$.
\end{enumerate}

The family $\{\eta_{k,m,D}\}$ defines a topology on $\cS_H (\Xi)$.

\begin{thm}\label{th-RadSchwartz}
The Radon transform is a topological isomorphism $\cS (\Rn)\to \cS_H (\Xi)$.
\end{thm}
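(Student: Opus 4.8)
The plan is to reduce the statement, via the Fourier--Slice Theorem \eqref{eq-FST}, to a statement about the ``polar restriction'' map on Schwartz space. For $g\in\cS(\Rn)$ write $\Lambda g(r,\omega):=g(r\omega)$, a function on $\R\times\sn$, and let $\FTr$ denote the Fourier transform taken in the first variable only. Then \eqref{eq-FST} reads $\FTr(\cR f)=\Lambda(\FTrn f)$, that is, $\cR=\FTr^{-1}\circ\Lambda\circ\FTrn$. Since $\FTrn$ is a topological isomorphism of $\cS(\Rn)$ onto itself and $\FTr$ is a topological isomorphism of $\cS(\R\times\sn)$ onto itself, it suffices to identify the image of $\Lambda$ and to show that $\Lambda$ is a topological isomorphism onto it. As a preliminary, recall that $\cR\colon\cS(\Rn)\to\cS(\R\times\sn)$ is continuous and $\cR f$ is even (both recorded above), that condition (3) in the definition of $\cS_H(\Xi)$ holds because Fubini gives $\int_{-\infty}^{\infty}\cR f(r,\omega)\,r^k\,dr=\int_{\Rn}f(x)\,(x\cdot\omega)^k\,dx$, which is visibly the restriction to $\sn$ of a homogeneous polynomial of degree $k$ in $\omega$, and that $\cR$ is injective, since by \eqref{eq-FST} $\cR f=0$ forces $\widehat f$ to vanish on every ray through the origin, hence $f=0$.

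Next I would transport the conditions defining $\cS_H(\Xi)$ through $\FTr$. Since $\int_{-\infty}^{\infty}\varphi(r,\omega)\,r^k\,dr=(-2\pi i)^{-k}\,\partial_s^k(\FTr\varphi)(0,\omega)$, the transform $\FTr$ maps $\cS_H(\Xi)$ isomorphically onto the closed subspace $\mathcal{P}\subset\cS(\R\times\sn)$ consisting of the even $\psi$ such that for every $k\in\N$ the function $\omega\mapsto\partial_r^k\psi(0,\omega)$ is the restriction to $\sn$ of a homogeneous polynomial of degree $k$ on $\Rn$. So the theorem reduces to showing that $\Lambda\colon\cS(\Rn)\to\mathcal{P}$ is a topological isomorphism. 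Continuity and injectivity of $\Lambda$, and the fact that $\Lambda g\in\mathcal{P}$ for $g\in\cS(\Rn)$, are routine: the chain rule together with $|r\omega|=|r|$ yields the seminorm and decay bounds, and the Taylor expansion of $g$ at the origin produces the required homogeneity of the coefficients $\partial_r^k(\Lambda g)(0,\cdot)$. For surjectivity, given $\psi\in\mathcal{P}$ one sets $g(x):=\psi(|x|,x/|x|)$ for $x\neq 0$; this is manifestly smooth and rapidly decreasing on $\Rn\setminus\{0\}$, and once one knows that $g$ extends to an element of $\cS(\Rn)$ the identity $\Lambda g=\psi$ is immediate. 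Finally, continuity of $\cR^{-1}=\FTrn^{-1}\circ\Lambda^{-1}\circ\FTr$ then follows from the open mapping theorem for Fréchet spaces.

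I expect the smoothness of $g$ at the origin to be the only genuine obstacle: one must prove that an even function $\psi\in\cS(\R\times\sn)$ all of whose $r$--Taylor coefficients at $0$ are restrictions of homogeneous polynomials of the matching degree descends from a function that is smooth across $0$ on $\Rn$. The homogeneity conditions are exactly what is needed to reassemble the coefficients $\partial_r^k\psi(0,\cdot)$ into a formal power series in the Cartesian coordinates $x_1,\dots,x_n$, and a Whitney/Borel type extension argument at the origin (as in Helgason's original treatment of this theorem) then upgrades $g\in C^\infty(\Rn\setminus\{0\})$ to a genuine Schwartz function on $\Rn$. If one prefers to avoid the open mapping theorem, this same construction can be carried out with explicit seminorm bounds, giving the continuity of $\Lambda^{-1}$, and hence of $\cR^{-1}$, directly.
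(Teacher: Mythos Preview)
Your strategy is correct and is, in essence, the content of the references the paper invokes. The paper does not give a self-contained argument at all: it simply cites Helgason's Theorem~2.4 for the bijectivity of $\cR$ and Hertle's Corollary~4.8 for the continuity of $\cR$ and $\cR^{-1}$. Your factorization $\cR=\FTr^{-1}\circ\Lambda\circ\FTrn$ via the Fourier--Slice identity, together with the identification $\FTr(\cS_H(\Xi))=\mathcal P$, is exactly Helgason's route, and your isolation of the one nontrivial point---that an even $\psi\in\cS(\R\times\sn)$ with $\partial_r^k\psi(0,\cdot)$ a homogeneous polynomial of degree $k$ descends to a smooth function on $\Rn$---is the correct diagnosis of where the work lies. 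Your appeal to the open mapping theorem for the topological statement is legitimate since $\cS_H(\Xi)$ is a closed subspace of the Fr\'echet space $\cS(\R\times\sn)$; Hertle's argument gives explicit seminorm estimates instead, which is the ``avoid open mapping'' alternative you mention at the end.

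The only caveat is that the smoothness-at-the-origin step is sketched rather than proved: the Borel/Whitney argument you allude to does work, but it requires some care (one subtracts off a Borel extension realizing the prescribed Taylor jet at $0$ and then checks that the remainder, which is flat at $0$ in the $r$-variable, pulls back to a function smooth and flat at $0\in\Rn$). If you want a genuinely self-contained proof rather than a reduction to the literature, that lemma needs to be written out; otherwise your argument and the paper's are equivalent, with the paper outsourcing precisely the step you flag as the obstacle.
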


\begin{proof} By Theorem 2.4 in \cite{helgason99} it is a bijection and by Corollary 4.8 in \cite{hertle83} it is continuous with a continuous inverse.\end{proof}

Let $\cD_H (\Xi ):=C^\infty_c (\Xi)\cap \cS_H (\Xi)$ with the natural topology. For $R>0$, let $\cD_{H,R}(\Xi):=\{f\in \cD_H (\Xi)\: f(r,\omega )=0\text{ for } |r|>R\}$. The topology on $\cD_{H,R}(\Xi)$ is given by the seminorms
$$ |f|_{m,D_{\omega}}:=\sup_{(r,\omega)\in[-R,R] \times \sn} \left|\partial_r^m D_{\omega} f(r,\omega )\right|<\infty,$$
where $m$ is in $\N$ and $D_{\omega}$ is any differential operator on $\sn$. The topology on $\cD_H (\Xi )$ is the inductive limit topology of the subspaces $\cD_{H,R}(\Xi)$ with $0<R<\infty$.

\begin{thm}\label{th-1.2} The Radon transform is a topological isomorphism $\cD_R(\Rn)\simeq \cD_{H,R}(\Xi)$.
\end{thm}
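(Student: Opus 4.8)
The plan is to deduce Theorem~\ref{th-1.2} from Theorem~\ref{th-RadSchwartz} together with Helgason's support theorem for the Radon transform, so that essentially no new analysis is required. First I would record the purely formal point that $\cD_R(\Rn)$ sits inside $\cS(\Rn)$ and $\cD_{H,R}(\Xi)$ sits inside $\cD_H(\Xi)\subseteq\cS_H(\Xi)$, and that in each case the intrinsic topology coincides with the subspace topology. Indeed, for $f\in\cD_R(\Rn)$ one has $|f|_{N,\alpha}=\sup_{|x|\le R}(1+|x|^2)^N|D^{\alpha}f(x)|$, which is squeezed between $|f|_{0,\alpha}$ and $(1+R^2)^N|f|_{0,\alpha}$, where $|f|_{0,\alpha}=\sup_{|x|\le R}|D^{\alpha}f(x)|$ is exactly the defining seminorm of $\cD_R(\Rn)$; the same computation with $r$ in place of $|x|$ shows that the seminorms $\eta_{k,m,D_{\omega}}$ restricted to $\cD_{H,R}(\Xi)$ are equivalent to the seminorms $|f|_{m,D_{\omega}}$. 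Consequently, once one knows that $\cR$ restricts to a \emph{bijection} $\cD_R(\Rn)\to\cD_{H,R}(\Xi)$, bicontinuity is inherited automatically from Theorem~\ref{th-RadSchwartz}.

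So the whole content is that $\cR$ carries $\cD_R(\Rn)$ onto $\cD_{H,R}(\Xi)$. For the inclusion $\cR(\cD_R(\Rn))\subseteq\cD_{H,R}(\Xi)$: if $\operatorname{supp} f\subseteq\{|x|\le R\}$ then $\cR f\in\cS_H(\Xi)$ by Theorem~\ref{th-RadSchwartz}, while for $|r|>R$ the hyperplane $\xi(r,\omega)$ is disjoint from the ball of radius $R$, so $\cR f(r,\omega)=0$. Hence $\cR f$ is supported in the compact set $[-R,R]\times\sn$, so $\cR f\in\Cic(\Xi)\cap\cS_H(\Xi)=\cD_H(\Xi)$ and in fact $\cR f\in\cD_{H,R}(\Xi)$. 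For surjectivity, take $g\in\cD_{H,R}(\Xi)\subseteq\cS_H(\Xi)$; by Theorem~\ref{th-RadSchwartz} there is a unique $f\in\cS(\Rn)$ with $\cR f=g$, and it remains to show $\operatorname{supp} f\subseteq\{|x|\le R\}$. Since $g$ vanishes for $|r|>R$, we have $\cR f(\xi)=0$ for every hyperplane $\xi$ at distance greater than $R$ from the origin, and Helgason's support theorem — which applies to any continuous $f$ with $|x|^k f(x)$ bounded for all $k$, in particular to a Schwartz function — gives $f(x)=0$ for $|x|>R$. Thus $f\in\cD_R(\Rn)$, and combining the two inclusions we obtain the claimed bijection.

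The only ingredient here that is not mere bookkeeping is the support theorem, and that is where I expect the real work to be hidden; if one did not wish to quote it as a black box, reproving it (say via the Fourier--Slice Theorem \eqref{eq-FST} and the classical Paley--Wiener estimate applied in the radial variable, controlling the spherical dependence) would be the main obstacle. Everything else reduces to the elementary geometric remark that a hyperplane far from the origin misses a small centered ball, together with the seminorm comparisons above.
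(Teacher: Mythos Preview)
Your proof is correct and is essentially the same as the paper's, which simply cites Helgason's range and support theorems for the bijection and Hertle for bicontinuity. The one minor difference is that you derive bicontinuity internally from Theorem~\ref{th-RadSchwartz} via the seminorm comparison showing the intrinsic and subspace topologies on $\cD_R(\Rn)$ and $\cD_{H,R}(\Xi)$ agree, whereas the paper invokes Hertle's result a second time; your route is slightly more self-contained but uses exactly the same ingredients.
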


\begin{proof} By Theorems 2.4 and 2.6 and Corollary 2.8 in \cite{helgason99} it is a bijection and by Corollary 4.8 in \cite{hertle83} it is continuous with a continuous inverse.\end{proof}

%%%%%%%%%%%%%%%%%%%%%%%%%%%%%%%%%%%%%%%%%%%%%%%%%%%%%%%%%%%%%%%%%%%%%%%%%%%%%%%%%%%%%%%%%%%%%%%%%%%%%%%%%%%%%%%%%%%%%%%%%%%%%%%%%%%%%%%%%%%%%%%%%%%%%%%%%%%  Section: The Paley-Wiener Theorem for vector valued functions on $\Rn$  %%%%%%%%%%%%%%%%%%%%%%%%%%%%%%%%%%% %%%%%%%%%%%%%%%%%%%%%%%%%%%%%%%%%%%%%%%%%%%%%%%%%%%%%%%%%%%%%%%%%%%%%%%%%%%%%%%%%%%%%%%%%%%%%%%%%%%%%%%%%%%%%%%%%%%%%%%%%%%%%%%%%%%%
\section{The Paley-Wiener Theorem for vector valued functions on $\Rn$}\label{VecValPW}
\noindent
There are many Paley-Wiener theorems or sometimes also called Paley-Wiener-Schwartz theorems in the literature. They establish a relation between some class of holomorphic functions and harmonic analysis of compactly supported functions or distributions. The classical Paley-Wiener theorem characterizes the space of compactly supported smooth functions on $\Rn$ by means of the Fourier transform. \footnote{This case is often referred to as "the PW theorem" or "the classical PW theorem". However the original work of R. Paley and N. Wiener \cite{PaleyWiener1934} is devoted to the case of square-integrable functions. The case of distributions was first proved by L. Schwartz \cite{schwartz52} and this case is due to L. H{\"o}rmander \cite{hoermander63}, Th. 1.7.7, p.21. We comply with the old-established labeling.}
The long-known Paley-Wiener theorems were discussed in the introduction. Here we will prove an analog for Hilbert space valued functions. It reduces to the classical result by taking the Hilbert space to be one dimensional.

Let $ \HS $ denote a complex separable Hilbert space with a complete orthonormal set $\{e_i\}_{i\in J}$, where $J$ is a finite or a countably infinite index set.  The norm in $\HS$ is denoted by  $\| \cdot \|$, and the inner-product of two elements $u,v\in \HS$ is denoted by $(u, v)$.

Let $r > 0$. The space of $\HS$-valued functions $\varphi: \Rn \rightarrow \HS $ such that for every $ u \in \HS$ the complex valued function, $ x \mapsto( \varphi(x), u)$ belongs to $\Dr(\Rn)$, is denoted by $\DrH = \DrH(\Rn, \HS)$. We let the topology on $\DrH$ be given by the seminorms
\begin{equation}\label{def-SeminormDr}
 \nu_{N ,u}(\varphi) :=\max_{|\alpha |\le N} \sup_{x\in\Rn} \left| D^\alpha \left(\varphi(x), u\right) \right|\, ,
\end{equation}
with $\alpha \in\N^n$, $N\in \N$, and $u \in\HS$. The same topology is defined by the seminorms
\begin{equation}\label{def-seminormDr2}
\widetilde{\nu}_{N,u}(\varphi ):=\max_{k\le N} |(1+\Delta )^k(\varphi (x),u)|\, .
\end{equation}
Restricting $u$ to be one of the elements in the orthonormal basis $\{e_i\}$ gives the same topology.
We remark without proof, as it follows easily from the one dimensional case applied to each of the functions $x\mapsto (\varphi (x),e_i)$, that $\DrH$ is a Fr{\'e}chet space.

Denote by $\HrH = \HrH(\C^n, \HS)$ the space of weakly-holomorphic functions $F:\C^n\to \HS$, which satisfy that for every $ u\in \HS $ and $N\in\N$
\begin{equation}\label{def-SemiNormPW}
\rho_{N, u}(F) := \sup\limits_{z \in \C^n} (1 + |z|^2)^N e^{-r |Im(z)|} |(F(z), u)|< \infty.
\end{equation}
Let $\HrH$ be topologized by the seminorms $\rho_{N,u}$. Again, it is enough to use the countable family of seminorms $\{\rho_{N,e_j}\}_{N,j}$. Hence $\HrH$ is a Fr{\'e}chet space.

\begin{lem} The space $\HrH$ and its topology can be defined using the seminorms
 $$ \rho_{N}(F) := \sup\limits_{z \in \Cn} (1 + |z|^2)^N e^{-r |Im(z)|} \|F(z)\|\, ,$$
 with $ N \in \N $.
\end{lem}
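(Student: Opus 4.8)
The plan is to establish two things: (i) every $F\in\HrH$ automatically satisfies $\rho_N(F)<\infty$ for all $N\in\N$, so that $\{\rho_N\}_{N\in\N}$ is a genuine family of seminorms on the same underlying space of functions; and (ii) this family generates precisely the given topology of $\HrH$. Only (i) involves any real analysis, resting on the uniform boundedness principle; (ii) is a comparison of locally convex topologies.

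For (i), fix $F\in\HrH$ and $N\in\N$, and for $z\in\Cn$ put $v_z:=(1+|z|^2)^N e^{-r|\im z|}\,F(z)\in\HS$. By the defining property of $\HrH$, for \emph{every} $u\in\HS$ we have $\sup_{z\in\Cn}|(v_z,u)|=\rho_{N,u}(F)<\infty$; that is, $\{v_z\}_{z\in\Cn}$ is a weakly bounded subset of $\HS$. A weakly bounded subset of a Banach space is norm bounded (uniform boundedness principle), so $\rho_N(F)=\sup_{z\in\Cn}\|v_z\|<\infty$. Hence each $\rho_N$ is finite on $\HrH$; as these seminorms vanish simultaneously only on $F\equiv0$, they define a metrizable locally convex topology $\tau'$ on $\HrH$. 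Write $\tau$ for the originally given topology.

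For (ii), one inclusion is immediate from Cauchy--Schwarz: $\rho_{N,u}(F)\le\|u\|\,\rho_N(F)$, so every $\rho_{N,u}$ (in particular every $\rho_{N,e_j}$) is $\tau'$-continuous, whence $\tau\subseteq\tau'$. For the reverse inclusion I would apply the open mapping theorem. We may use that $(\HrH,\tau)$ is Fréchet (established in the excerpt), so it suffices to see that $(\HrH,\tau')$ is Fréchet as well: it is metrizable, and for completeness a $\tau'$-Cauchy sequence $(F_k)$ converges pointwise in $\HS$ because $\|F_k(z)-F_l(z)\|\le e^{r|\im z|}\rho_0(F_k-F_l)$; the pointwise limit $F$ is weakly holomorphic since each $(F_k(\cdot),u)\to(F(\cdot),u)$ locally uniformly, one has $\rho_N(F)<\infty$ by (i) and hence $F\in\HrH$, and $\rho_N(F_k-F)\to0$ follows by letting $l\to\infty$ pointwise in $\rho_N(F_k-F_l)<\varepsilon$. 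Then $\id\colon(\HrH,\tau')\to(\HrH,\tau)$ is a continuous linear bijection of Fréchet spaces, hence a topological isomorphism, so $\tau=\tau'$.

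The one genuinely delicate inclusion is $\tau'\subseteq\tau$. It cannot be obtained from a pointwise estimate the way $\tau\subseteq\tau'$ was, because $\rho_N=\sup_{\|u\|\le1}\rho_{N,u}$ is a supremum of infinitely many of the weak seminorms and such a supremum need not be $\tau$-continuous a priori; one therefore has to use completeness, either via the open mapping theorem as above, or by showing directly that $\rho_N$ is bounded on a suitable $\tau$-neighbourhood of the origin. Thus the bulk of the work lies in checking the Fréchet structure for $\tau'$ and making the open-mapping step precise; the Banach--Steinhaus estimate in (i) and the Cauchy--Schwarz inequality in (ii) are routine.
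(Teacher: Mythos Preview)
Your argument is correct, and the key analytic step in part~(i) is exactly the one the paper uses: the set $\{(1+|z|^2)^N e^{-r|\im z|}F(z):z\in\Cn\}$ is weakly bounded in $\HS$, hence norm bounded (the paper cites this as Theorem~3.18 in Rudin, which in a Hilbert space is just the uniform boundedness principle you invoke).

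Where you differ is in part~(ii). The paper establishes only the easy inequality $\rho_{N,u}\le\|u\|\rho_N$ together with the finiteness of $\rho_N$, and then simply asserts that ``the seminorms $\rho_{N,u}$ can be replaced by the seminorms $\rho_N$,'' without addressing the reverse continuity $\tau'\subseteq\tau$. Your open-mapping argument, via checking that $(\HrH,\tau')$ is Fr\'echet, is a genuine addition that fills this gap; the completeness verification you sketch is straightforward and correct. So your proof is not a different route so much as a more careful completion of the paper's, which is somewhat elliptical on precisely the point you flag as delicate.
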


\begin{proof} It is clear that if $\rho_{N}(F)<\infty$, then $\rho_{N,u}(F)\le \|u\|\rho_{N}(F)<\infty$. For the other direction,
let $E:=\{ (1 + |z|^2)^N e^{-r |Im(z)|} F(z) \: z\in\C^n\}$. From the assumption it follows that the set $E$ is a weakly bounded.  Moreover, $\HS$ being a Hilbert space is locally convex and thus by the Theorem 3.18 in \cite{rudin91} the set $E$ is bounded. Hence the seminorms $\rho_{N, u}$ can be replaced by the seminorms $\rho_N$.
\end{proof}

\begin{lem}\label{lemWI} Let $ \varphi \in \DrH$ and $z\in\C^n$, then $x\mapsto \varphi (x)e^{-iz\cdot x}$ is weakly integrable and
\begin{equation}\label{eq-FourAb}
\left|\int_{\Rn} (\varphi (x) , u)e^{-iz\cdot x}\, dx\right| \le \Vol (B_r(0))\|\varphi \|_\infty \|u \| e^{r|\im z|} \, .
\end{equation}
\end{lem}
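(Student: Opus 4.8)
The plan is to construct the weak integral $\int_{\Rn}\varphi(x)e^{-iz\cdot x}\,dx$ as a genuine element $I$ of $\HS$, to bound its norm by the right-hand side of \eqref{eq-FourAb}, and then to read off the inequality from Cauchy--Schwarz. The first step is to record that $\|\varphi\|_\infty:=\sup_{x\in\Rn}\|\varphi(x)\|$ is finite. Indeed, for every $u\in\HS$ the scalar function $x\mapsto(\varphi(x),u)$ lies in $\Dr(\Rn)$, hence is continuous with support in the compact ball $\overline{B_r(0)}$, and is therefore bounded on $\Rn$. Thus $\{\varphi(x):x\in\Rn\}$ is a weakly bounded subset of $\HS$, and by Theorem 3.18 in \cite{rudin91} (exactly as in the proof of the previous lemma) it is bounded; in particular $\|\varphi\|_\infty<\infty$ and $\varphi(x)=0$ for $|x|>r$.

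Now fix $z\in\C^n$ and set $\psi(x):=\varphi(x)e^{-iz\cdot x}$, which vanishes for $|x|>r$. For $|x|\le r$ one has $|e^{-iz\cdot x}|=e^{\im z\cdot x}\le e^{|x|\,|\im z|}\le e^{r|\im z|}$, so $\|\psi(x)\|\le\|\varphi\|_\infty\,e^{r|\im z|}$ for all $x$, and consequently
\[\int_{\Rn}\|\psi(x)\|\,dx\le \Vol(B_r(0))\,\|\varphi\|_\infty\,e^{r|\im z|}<\infty.\]
Since $\HS$ is separable and $x\mapsto(\psi(x),u)$ is continuous for each $u$, $\psi$ is weakly measurable, hence strongly measurable by Pettis's theorem; together with the displayed bound this makes $\psi$ Bochner integrable. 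Let $I:=\int_{\Rn}\psi(x)\,dx\in\HS$ be its Bochner integral. Then $I$ is the asserted weak integral of $x\mapsto\varphi(x)e^{-iz\cdot x}$: for every $u\in\HS$,
\[(I,u)=\int_{\Rn}(\psi(x),u)\,dx=\int_{\Rn}(\varphi(x),u)e^{-iz\cdot x}\,dx,\]
and $\|I\|\le\int_{\Rn}\|\psi(x)\|\,dx\le\Vol(B_r(0))\,\|\varphi\|_\infty\,e^{r|\im z|}$. Combining this with $|(I,u)|\le\|I\|\,\|u\|$ gives \eqref{eq-FourAb}. (If one prefers to avoid Bochner integration, the same $I$ is produced directly: $u\mapsto\int_{\Rn}(\varphi(x),u)e^{-iz\cdot x}\,dx$ is a bounded conjugate-linear functional on $\HS$ of norm at most $\Vol(B_r(0))\|\varphi\|_\infty e^{r|\im z|}$, and the Riesz representation theorem furnishes $I$.)

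I do not expect any essential difficulty in this argument; the only points that require a little care are the passage from weak to uniform boundedness that yields $\|\varphi\|_\infty<\infty$, and pinning down the precise sense in which $x\mapsto\varphi(x)e^{-iz\cdot x}$ is ``weakly integrable'' — both of which are handled exactly as above.
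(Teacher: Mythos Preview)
Your proof is correct and rests on the same pointwise estimate the paper uses, namely $|(\varphi(x),u)e^{-iz\cdot x}|\le \|\varphi\|_\infty\|u\|e^{r|\im z|}\chi_{\overline{B_r(0)}}(x)$; the paper simply records this inequality and integrates it directly to obtain \eqref{eq-FourAb}, without passing through a Bochner integral or Cauchy--Schwarz. Your version is more careful in two respects---you justify $\|\varphi\|_\infty<\infty$ via weak boundedness and Theorem~3.18 of \cite{rudin91}, and you actually construct the weak integral as an element of $\HS$---but neither of these is strictly needed for the stated inequality, which is a scalar bound for each fixed $u$ and follows immediately from the displayed pointwise estimate once one knows the integrand is supported in $\overline{B_r(0)}$.
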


\begin{proof} This follows from
\begin{equation}\label{eq-extimate}
|(\varphi (x),u)e^{-iz\cdot x}|\le \|\varphi\|_\infty \|u\|e^{r|\im z|} \chi_{\overline{B_r(0)}}\, .
%\tag*{\qedhere}
\end{equation}
\end{proof}

We define the Fourier transform of $\varphi \in\DrH$ as the weak integral
\[\widehat{\varphi}(y)=\cF (\varphi)(y):= \int_{\Rn} \varphi (x ) e^{-2\pi ix\cdot y}\, dx\, .\]

\begin{thm}\label{secondTh} (Paley-Wiener type theorem for $\DrH$)
If $\varphi \in \DrH$, then $\cF {\varphi}$ extends to a weakly holomorphic function on $\C^n$ denoted by $\FTc (\varphi)$, and $\FTc (\varphi)\in \PW^{\HS}_{2\pi r}$. Furthermore, the Fourier transform $\FTc$ is a linear topological isomorphism of $\DrH$ onto $\PW^{\HS}_{2\pi r}$. The inverse of $\FTc$ is given by the conjugate weak Fourier transform on $\Rn$.
\end{thm}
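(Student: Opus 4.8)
The strategy is to reduce everything to the classical Paley--Wiener theorem stated in the introduction by pairing against vectors $u\in\HS$, and then to re-assemble the scalar conclusions into $\HS$-valued ones using the weak integral of Lemma~\ref{lemWI} and the preceding lemma.

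First I would build the candidate transform. Given $\varphi\in\DrH$ and $z\in\C^n$, Lemma~\ref{lemWI} (applied with $z$ replaced by $2\pi z$) shows that $x\mapsto\varphi(x)e^{-2\pi iz\cdot x}$ is weakly integrable, so $\FTc(\varphi)(z):=\int_{\Rn}\varphi(x)e^{-2\pi iz\cdot x}\,dx$ is a well-defined element of $\HS$, characterized by $(\FTc(\varphi)(z),u)=\int_{\Rn}(\varphi(x),u)e^{-2\pi iz\cdot x}\,dx$ for every $u\in\HS$. Since $x\mapsto(\varphi(x),u)$ belongs to $\Dr(\Rn)$, the right-hand side is, by the classical theorem, precisely the holomorphic extension to $\C^n$ of $\FTrn\big((\varphi(\cdot),u)\big)$; hence $z\mapsto(\FTc(\varphi)(z),u)$ is entire for every $u$, so $\FTc(\varphi)$ is weakly holomorphic and agrees on $\Rn$ with $\cF\varphi$. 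The classical growth estimate yields, for each $N$ and $u$, a constant $C_{N,u}$ with $(1+|z|^2)^N e^{-2\pi r|\im z|}\,|(\FTc(\varphi)(z),u)|\le C_{N,u}$, i.e.\ $\rho_{N,u}(\FTc(\varphi))<\infty$, so $\FTc(\varphi)\in\PW^{\HS}_{2\pi r}$.

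Next I would establish bijectivity. Injectivity is immediate: $\FTc(\varphi)=0$ forces $\FTrn\big((\varphi(\cdot),u)\big)=0$, hence $(\varphi(\cdot),u)=0$ for every $u$, hence $\varphi=0$. For surjectivity, take $F\in\PW^{\HS}_{2\pi r}$. By the preceding lemma, $\|F(y)\|\le\rho_N(F)(1+|y|^2)^{-N}$ for all $N$, so for $N$ large the map $y\mapsto F(y)e^{2\pi ix\cdot y}$ is weakly integrable on $\Rn$ with a bound uniform in $x$; set $\varphi(x):=\int_{\Rn}F(y)e^{2\pi ix\cdot y}\,dy$. For each $u$ the function $x\mapsto(\varphi(x),u)$ is the classical inverse Fourier transform of $y\mapsto(F(y),u)$, which lies in $\Dr(\Rn)$ by the classical theorem; thus $\varphi\in\DrH$. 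Applying the first step and the uniqueness of holomorphic extension, $(\FTc(\varphi)(z),u)=(F(z),u)$ for all $z$ and $u$, so $\FTc(\varphi)=F$. The very same computation shows that the conjugate weak Fourier transform $F\mapsto\big(x\mapsto\int_{\Rn}F(y)e^{2\pi ix\cdot y}\,dy\big)$ is a two-sided inverse of $\FTc$.

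Finally, continuity in both directions is inherited from the scalar case: $\rho_{N,u}(\FTc(\varphi))=\pwn\big(\FTrnc((\varphi(\cdot),u))\big)$, and continuity of the classical transform bounds this by finitely many of the seminorms $\nu_{N',u}(\varphi)$; the symmetric estimate handles the inverse. Alternatively, both $\DrH$ and $\PW^{\HS}_{2\pi r}$ being Fr\'echet spaces, once $\FTc$ is a continuous bijection the open mapping theorem makes it a topological isomorphism. I expect the only real subtlety to be the surjectivity step: one has to check that the pointwise, Riesz-type reconstruction of $\varphi$ from $F$ really produces an element of $\DrH$ and that the weak (Pettis) integrals involved are legitimate, which is exactly where the uniform estimate from the preceding lemma is used; everything else is bookkeeping on top of the classical Paley--Wiener theorem.
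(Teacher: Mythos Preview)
Your proposal is correct and follows essentially the same approach as the paper: both reduce to the scalar Paley--Wiener theorem by pairing against $u\in\HS$, use Lemma~\ref{lemWI} and the preceding norm lemma to justify the weak integrals, and reconstruct the inverse via the conjugate weak Fourier transform. The only cosmetic difference is that the paper obtains the continuity estimates by an explicit integration by parts (yielding $\rho_{N,u}(\FTc\varphi)\le C\,\nu_{2N,u}(\varphi)$ and $\widetilde{\nu}_{N,u}(\varphi)\le C'\rho_{N+k,u}(F)$ with $k>n/2$), whereas you invoke the continuity of the classical transform and, as a fallback, the open mapping theorem for Fr\'echet spaces.
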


% proof of the theorem
\begin{proof}
Equation (\ref{eq-extimate}) clearly shows that the integral $\int (\varphi (x),u)e^{-2\pi iz\cdot x}\, dx$ converges uniformly on every compact subset of $\C^n$ and is therefore holomorphic as a function of $z$. Moreover by the Theorem 3.27 in \cite{rudin91} the integral $\int \varphi(x) \, e^{-2\pi iz\cdot x}\, dx$ converges to a vector in $\HS$.  Partial integration and (\ref{eq-extimate}) show that
\[\rho_{N,u}(\FTc (\varphi)) \le C \, \nu_{2N,u}(\varphi) <\infty,\]
for some constant $C$.
In particular,  $\FTc(\varphi)\in \PW^{\HS}_{2\pi r}$ and $\FTc : \DrH \to \PW^{\HS}_{2\pi r}$ is continuous.

% begin of the surjectivity-part of the proof
To show surjectivity, let $F \in \PW^{\HS}_{2\pi r}$. Then the function $ z \mapsto (F(z), u)=: F_u(z) \in \PW_{2\pi r}(\C^n)$. Define $\varphi_u:= \cF_{\R^n}^{-1}(F_u)$, then by the classical Paley-Wiener Theorem $\varphi_u \in \Dr(\R^n)$. Let $k>n/2$, so that $x\mapsto (1+|x|^2)^{-k}$ is integrable. Then
\[\int |(F(x),u)|\, dx\le \left( \rho_{k}(F) \int (1+|x|^2)^{-k}\, dx\,\right)\,  \|u\|\, .\]
Hence the integral
\[\varphi(y) = \int_{\R^n}F(x)e^{2\pi ix\cdot y}\, dx=:(\FTc)^{-1}(F)(y)\]
exists and $\varphi \in\DrH$.

Finally, integrating by parts, we obtain
\[ \sup\limits_{y\in\Rn} |(1+\Delta )^N \varphi_u (y)| \le \left( \int (1+|x|^2)^{-k}\, dx\, \|u\|\right)\, \rho_{N+k}(F).\]
This shows that the the map $F\mapsto\varphi$ is continuous. The claim now follows as $(\FTc)^{-1}\circ \FTc=\id_{\DrH}$ and $\FTc\circ (\FTc)^{-1}=\id_{\PW^{\HS}_{2\pi r}}$.
\end{proof}

\begin{rem}
The above theorem can be proved without the use of the classical Paley-Wiener theorem. Hence as pointed out at the beginning of this section it generalizes the classical result.
\end{rem}

%%%%%%%%%%%%%%%%%%%%%%%%%%%%%%%%%%%%%%%%%%%%%%%%%%%%%%%%%%%%%%%%%%%%%%%%%%%%%%%%%%%%%%%%%%%%%%%%%%%%%%%%%%%%%%%%%%%%%%%%%%%%%%%%%%%%%%%%%%%%%%%%%%%%%%%%%%%%%%%%%%%%%  Section: Gelfand Pairs  %%%%%%%%%%%%%%%%%%%%%%%%%%%%%%%%%%%%%%%%%%%%%%%%%%%%%%%%%%%%%%%%%%%%%%%%% %%%%%%%%%%%%%%%%%%%%%%%%%%%%%%%%%%%%%%%%%%%%%%%%%%%%%%%%%%%%%%%%%%%%%%%%%%%%%%%%%%%%%%%%%%%%%%%%%%%%%%%%%%%%%%%%%%%%%%%%%%%%%%%%%%%%
\section{Gelfand Pairs}\label{SectionGelfandPairs}
\noindent
We recall the definition of a Gelfand pair $(G,K)$ and the basic facts about the Fourier transform on the associated commutative space $G/K$. These facts are derived from the abstract Plancherel formula for the group $G$, instead of, as commonly done, from the theory of spherical functions. A more detailed discussion can be found in \cite{wolf2007}.

Let $G$ be a Lie group and  $K\subset G$ a compact subgroup. Denote by $\ell $ the left regular representation: $\ell (a)f(x)=f(a^{-1}x)$ and by $\rho $ the right regular representation: $\rho (a)f(x)=f(xa)$. We often identify functions on $G/K$ with right invariant functions on $G$. For $1\le p \le \infty$, let
\begin{eqnarray*}
L^p(G/K)^K&=&\{f\in L^p(G)\: (\forall k_1,k_2\in K)\,\, \ell (k_1)\rho (k_2)f=f\}\\
&=&\{f\in L^p(G/K)\: (\forall k\in K)\,\, \ell (k)f=f\}\, .
\end{eqnarray*}
If $f\in L^1(G)$ and $g\in L^p(G/K)^K$, then
\[f*g(x)=\int_G f(y)g(y^{-1}x)\, dy\]
is well defined, $f*g\in L^p(G/K)$ and $\|f*g\|_p\le \|f\|_1\|g\|_p$. For $f\in L^1(G/K)^K$ and $g\in L^p(G/K)^K$, $f*g$ is left $K$-invariant. It follows that $L^1(G/K)^K$ is a Banach algebra. The pair $(G,K)$ is called a \textit{Gelfand pair} if $L^1(G/K)^K$ is abelian. In this case we call $G/K$ a \textit{commutative space}. In case $G/K$ is a commutative space, there exists a set $\Lambda\subseteq \widehat{G}$, where $\widehat{G}$ is the unitary dual of $G$, such that
\begin{equation}\label{eq-DirectInt}
(\ell,L^2(G/K))\simeq \int^\oplus_{\Lambda }(\pi_\lambda ,\HS_\lambda )\, d\mu (\lambda )\, ,
\end{equation}
where each $\pi_{\lambda}$ is an irreducible unitary representation acting on the Hilbert space $\HS_{\lambda}$. The important fact for us is, that this is a multiplicity one decomposition and $\dim \HS_\lambda^K=1$ for almost all $\lambda$. Here, as usually $\HS_\lambda^K$ stands for the space of $K$-fixed vectors in $\HS_\lambda$.

For details in the following arguments we refer to \cite{wolf2007}, for the case of Riemannian symmetric spaces of noncompact type see \cite{OlafssonSchlichtkrullRT08}. Let $p : \Lambda \to \int^\oplus \HS^K_\lambda d\mu (\lambda )$ be a measurable section such that $\|p_\lambda \|=1$ for almost all $\lambda$. For each $\lambda$, $p_\lambda$ is unique up to a multiplication by $z\in \C$ with $|z|=1$.

Recall the operator valued Fourier transform. For $f\in L^1(G)$ and a unitary representation $\pi$ of $G$,
\[\pi (f):=\int_G f(x)\pi (x)\, dx\in \mathrm{B}(\HS_\pi),\]
where $\mathrm{B}(\HS_\pi)$ stands for the space of bounded operators on $\HS_\pi$. Furthermore, $\|\pi (f)\|\le \|f\|_1$. We also recall, that for Type I groups, there exists a measure, \textit{the Plancherel measure} on $\widehat{G}$, such that
\begin{enumerate}
\item If $f\in C^\infty_c (G)$, then $\pi (f)$ is a Hilbert-Schmidt operator and
\begin{equation}\label{eq-HSnorm}
\|f\|_2^2=\int_{\widehat{G}}\|\pi (f)\|^2_{\text{HS}}\, d\mu (\pi )\, .
\end{equation}
\item The operator valued Fourier transform extends to $L^2(G)$ such that (\ref{eq-HSnorm}) still holds.
\item For $f\in C^\infty_c(G)$, $\displaystyle{f(x)=\int_{\widehat{G}}\Tr (\pi (x^{-1})\pi (f))\, d\mu (\pi)}$ pointwise and in $L^2$-sense otherwise.
\end{enumerate}
The projection $\pr : \HS_\pi \to \HS_\pi^K$ is given by
\[\pr (v)=\int_K \pi (k)v\, dk\, .\]
If $f\in L^1(G/K)$, then for $k\in K$
\begin{eqnarray*}
\pi (f)v&=&\int_G f(x)\pi (x) v\, dx\\
&=& \int_G f(xk^{-1})\pi (x)v\, dx\\
&=&\int_G f(x)\pi (x)\pi (k)v\, dx\, .
\end{eqnarray*}
As this holds for all $k\in K$, integration over $K$ gives:

\begin{lem} Let $f\in L^1(G/K)$. Then $\pi (f)=\pi (f)\pr$.
\end{lem}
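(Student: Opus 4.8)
The plan is to use nothing more than the right $K$-invariance of $f$ together with the left-invariance of Haar measure on $G$, which is precisely the computation displayed just above the statement. Fix $v\in\HS_\pi$ and $k\in K$. Since $f\in L^1(G/K)$ means $f(xk)=f(x)$ for all $x\in G$, $k\in K$, I would first rewrite
\[
\pi(f)v=\int_G f(x)\pi(x)v\,dx=\int_G f(xk^{-1})\pi(x)v\,dx,
\]
and then substitute $x\mapsto xk$; left-invariance of $dx$ and the homomorphism property $\pi(xk)=\pi(x)\pi(k)$ give
\[
\pi(f)v=\int_G f(x)\pi(x)\pi(k)v\,dx=\pi(f)\bigl(\pi(k)v\bigr).
\]
Thus $\pi(f)v=\pi(f)\pi(k)v$ for every $k\in K$.

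Next I would integrate this identity over $K$ against the normalized Haar measure $dk$. On the left the integrand is constant in $k$, so the left-hand side is unchanged; on the right one gets $\int_K \pi(f)\pi(k)v\,dk$. Recalling that $\pr(v)=\int_K\pi(k)v\,dk$, the lemma comes down to the identity
\[
\int_K \pi(f)\pi(k)v\,dk=\pi(f)\int_K \pi(k)v\,dk .
\]

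The one point that genuinely needs a word of justification — and the only place the argument is more than a substitution — is pulling the fixed operator $\pi(f)$ out of the $\HS_\pi$-valued integral over $K$. This is legitimate because $\pi(f)\in\mathrm{B}(\HS_\pi)$ is bounded, hence continuous, and a bounded operator commutes with a weak (Bochner) integral of a continuous, compactly supported vector-valued function; equivalently, one verifies the identity weakly by pairing with an arbitrary $w\in\HS_\pi$ and invoking Fubini. Combining this with the previous step yields $\pi(f)v=\pi(f)\pr(v)$ for all $v\in\HS_\pi$, i.e. $\pi(f)=\pi(f)\pr$. I do not anticipate any real obstacle: the statement is essentially the observation that $\pi(f)$ already kills the orthogonal complement of the $K$-fixed vectors because $f$ is right $K$-invariant, so everything reduces to the elementary change of variables above.
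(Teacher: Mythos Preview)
Your argument is correct and follows exactly the route the paper takes: the displayed change-of-variables computation yielding $\pi(f)v=\pi(f)\pi(k)v$, followed by integration over $K$. One small slip: the substitution $x\mapsto xk$ uses invariance of $dx$ under \emph{right} translation by $k\in K$ (automatic since $K$ is compact), not left-invariance; otherwise your write-up is, if anything, more carefully justified than the paper's.
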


Thus $\pi (f)$ is a rank-one operator and it is reasonable to define the \textit{vector valued Fourier transform} by
\[\widehat{f}(\lambda  ):=\cF_{G/K} (f) (\lambda):= \pi_\lambda (f) (p_\lambda )\, ,\]
where $p_\lambda\in\HS_\lambda^K$ as above.

\begin{lem}
If $f\in L^1(G)$ and $g\in C^\infty_c (G/K)$, then
\[\cF (f*g)(\lambda )=\pi_\lambda (f)\widehat{g}(\lambda)\, .\]
\end{lem}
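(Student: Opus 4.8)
The plan is to compute $\cF(f*g)(\lambda)$ directly from the definitions and the convolution formula, using the fact established just above that $\pi_\lambda(g) = \pi_\lambda(g)\pr$. First I would write
\[
\cF(f*g)(\lambda) = \pi_\lambda(f*g)(p_\lambda) = \int_G (f*g)(x)\,\pi_\lambda(x)p_\lambda\, dx = \int_G \int_G f(y) g(y^{-1}x)\,\pi_\lambda(x)p_\lambda\, dy\, dx.
\]
The estimate $\|f*g\|_1 \le \|f\|_1\|g\|_1$ (a special case of the bound $\|f*g\|_p \le \|f\|_1\|g\|_p$ recalled in Section~\ref{SectionGelfandPairs}) together with $\|\pi_\lambda(x)p_\lambda\| = 1$ guarantees absolute convergence, so Fubini applies and I may interchange the order of integration. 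The main computational step is then the substitution $x \mapsto yx$ in the inner integral, which turns $\pi_\lambda(x)$ into $\pi_\lambda(y)\pi_\lambda(x)$ by the homomorphism property and replaces $g(y^{-1}x)$ by $g(x)$:
\[
\cF(f*g)(\lambda) = \int_G f(y)\,\pi_\lambda(y)\left( \int_G g(x)\,\pi_\lambda(x)p_\lambda\, dx\right) dy = \int_G f(y)\,\pi_\lambda(y)\bigl(\pi_\lambda(g)p_\lambda\bigr)\, dy.
\]

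Recognizing the inner integral as $\pi_\lambda(g)p_\lambda = \widehat g(\lambda)$ and the outer integral as $\pi_\lambda(f)$ applied to that vector, I obtain $\cF(f*g)(\lambda) = \pi_\lambda(f)\widehat g(\lambda)$, which is the claim. Strictly speaking one should note that $\widehat g(\lambda)$ lies in $\HS_\lambda$ (not necessarily in $\HS_\lambda^K$), so $\pi_\lambda(f)$ is being applied here as the honest operator-valued Fourier transform of $f \in L^1(G)$, which is legitimate since $\|\pi_\lambda(f)\| \le \|f\|_1 < \infty$; the preceding lemma $\pi(f) = \pi(f)\pr$ is not needed for $f$, only for $g$, and indeed it is exactly what lets us write $\widehat g(\lambda) = \pi_\lambda(g)p_\lambda$ meaningfully as the single vector encoding the rank-one operator $\pi_\lambda(g)$.

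I do not expect a serious obstacle here; the only point requiring a little care is the justification of the interchange of integrals and of pulling $\pi_\lambda(y)$ out of the inner integral. For the latter, one uses that $v \mapsto \pi_\lambda(y)v$ is a bounded (indeed isometric) linear operator on $\HS_\lambda$, hence commutes with the vector-valued ($\HS_\lambda$-valued) integral defining $\pi_\lambda(g)p_\lambda$; this is the same principle invoked in the proof of Theorem~\ref{secondTh} via the Bochner-integral formalism of \cite{rudin91}. Since $g \in \Cic(G/K)$ the inner integral is over a compact set and everything is manifestly finite, so no growth or decay issues arise.
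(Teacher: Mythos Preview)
Your argument is correct and is exactly the paper's approach spelled out in detail: the paper's proof is the single line ``This follows from the fact that $\pi(f*g)=\pi(f)\pi(g)$,'' and you have simply reproved that identity (applied to $p_\lambda$) by the standard Fubini-and-substitute computation.
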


\begin{proof}
This follows from the fact that $\pi (f*g)=\pi (f)\pi (g)$.
\end{proof}

\begin{thm}\label{th-PlancherelGK} Let $f\in C_c^\infty (G/K)$. Then
\[\|f\|_2^2=\int \|\widehat{f}(\lambda )\|^2_{\HS_\lambda}\, d\mu (\lambda )\]
and
\[f(x)=\int (\widehat{f}(\lambda ),\pi_\lambda (x)p_\lambda )_{\HS_\lambda}\, d\mu (\lambda) .\]
Hence the vector valued Fourier transform extends to a unitary isomorphism
\[L^2(G/K)=\int^{\oplus} (\pi_\lambda ,\HS_\lambda )\, d\mu (\lambda )\]
with inverse
\[f(x)=\int (f_\lambda , \pi_\lambda (x)p_\lambda)_{\HS_\lambda }\, d\mu (\lambda)\]
understood in the $L^2$-sense.
\end{thm}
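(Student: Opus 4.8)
The plan is to deduce everything from the abstract Plancherel formula for the Type~I group $G$ — the three properties listed above — together with the preceding lemma, which for $f\in\Cic(G/K)$ gives $\pi_\lambda(f)=\pi_\lambda(f)\pr$.

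First I would make the rank-one structure explicit. Since $\dim\HS_\lambda^K=1$ with unit generator $p_\lambda$ for almost every $\lambda$, the projection is $\pr(v)=(v,p_\lambda)p_\lambda$, so the lemma yields
\[\pi_\lambda(f)v=(v,p_\lambda)\,\widehat f(\lambda),\qquad v\in\HS_\lambda .\]
In particular $\pi(f)=0$ on any $\pi$ with $\HS_\pi^K=\{0\}$, so only the representations parametrized by $\Lambda$ contribute to the integrals in (\ref{eq-HSnorm}) and in property (3). A one-line computation with an orthonormal basis of $\HS_\lambda$ gives, for the rank-one operator above, $\|\pi_\lambda(f)\|_{\mathrm{HS}}^2=\|p_\lambda\|^2\,\|\widehat f(\lambda)\|^2=\|\widehat f(\lambda)\|^2$, and for the composition with $\pi_\lambda(x^{-1})$,
\[\Tr\!\big(\pi_\lambda(x^{-1})\pi_\lambda(f)\big)=\big(\pi_\lambda(x^{-1})\widehat f(\lambda),\,p_\lambda\big)=\big(\widehat f(\lambda),\,\pi_\lambda(x)p_\lambda\big),\]
the last step by unitarity of $\pi_\lambda$.

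Substituting these two identities into (\ref{eq-HSnorm}) and into property (3) — and using that the measure $\mu$ in the decomposition (\ref{eq-DirectInt}) is exactly the restriction to $\Lambda$ of the Plancherel measure of $G$, which is part of the structure theory of commutative spaces (see \cite{wolf2007}) — one gets directly
\[\|f\|_2^2=\int_\Lambda\|\widehat f(\lambda)\|_{\HS_\lambda}^2\,d\mu(\lambda),\qquad f(x)=\int_\Lambda\big(\widehat f(\lambda),\pi_\lambda(x)p_\lambda\big)_{\HS_\lambda}\,d\mu(\lambda)\]
for $f\in\Cic(G/K)$. The first identity shows $\cF_{G/K}$ is isometric on the dense subspace $\Cic(G/K)$, hence extends to an isometry of $L^2(G/K)$ into $\int^\oplus\HS_\lambda\,d\mu(\lambda)$; property (2) ensures this extension is the one compatible with the operator-valued Fourier transform, and since $\cF_{G/K}$ intertwines $\ell$ with $\int^\oplus\pi_\lambda$, a standard direct-integral argument combined with the multiplicity-one statement in (\ref{eq-DirectInt}) identifies the closed invariant image with all of $\int^\oplus(\pi_\lambda,\HS_\lambda)\,d\mu(\lambda)$. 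The inversion formula, valid pointwise for $f\in\Cic(G/K)$, then holds in the $L^2$-sense for general $f\in L^2(G/K)$.

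The routine items — measurability of $\lambda\mapsto\widehat f(\lambda)$, absolute convergence of the integrals for $f\in\Cic(G/K)$, and the orthonormal-basis trace computation — I would not dwell on. The step that requires care, and the one I expect to be the main obstacle, is the bookkeeping linking the abstract objects: choosing the section $\lambda\mapsto p_\lambda$ measurably with $\|p_\lambda\|=1$, identifying the Plancherel measure of $G$ restricted to $\{\pi:\HS_\pi^K\neq\{0\}\}$ with $\mu$, and checking that $\widehat f(\lambda)=\pi_\lambda(f)p_\lambda$ really is a version of the intertwining map realizing (\ref{eq-DirectInt}). Once this dictionary is in place, the analytic content reduces entirely to the scalar Plancherel theorem for $G$.
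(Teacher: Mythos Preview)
Your reduction to the operator-valued Plancherel formula is exactly the paper's approach: the computations $\|\pi_\lambda(f)\|_{\mathrm{HS}}^2=\|\widehat f(\lambda)\|^2$ and $\Tr(\pi_\lambda(x^{-1})\pi_\lambda(f))=(\widehat f(\lambda),\pi_\lambda(x)p_\lambda)$ are carried out there in the same way, by extending $p_\lambda$ to an orthonormal basis and using $\pi_\lambda(f)e_{j,\lambda}=0$ for $j>1$.

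The one substantive difference is in the surjectivity step. You argue abstractly: the image of $\cF_{G/K}$ is a closed $G$-invariant subspace of the multiplicity-free direct integral, hence of the form $\int_E^\oplus\HS_\lambda\,d\mu$ for some measurable $E\subseteq\Lambda$, and then ``a standard direct-integral argument'' forces $E=\Lambda$. That last implication is not automatic --- you still owe a reason why no $\lambda$-set of positive measure is missed. The paper bypasses this by a direct construction: given an arbitrary section $(f_\lambda)$, define rank-one operators $T_\lambda$ by $T_\lambda p_\lambda=f_\lambda$, $T_\lambda|_{(\HS_\lambda^K)^\perp}=0$; then $(T_\lambda)$ lies in the Hilbert--Schmidt direct integral, so the inverse of the \emph{operator-valued} Plancherel produces $f\in L^2(G)$ with $\pi_\lambda(f)=T_\lambda$, and one checks $f\in L^2(G/K)$ and $\widehat f(\lambda)=f_\lambda$. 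This is cleaner because it uses the surjectivity already built into the group Plancherel theorem rather than re-proving it at the level of $G/K$. Your route can be completed (for instance by noting that the isometry, combined with the known unitary equivalence (\ref{eq-DirectInt}), forces the image to carry the full Plancherel measure), but the paper's constructive inverse is the more efficient way to close the argument.
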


\begin{proof} Extend $e_{1,\lambda}:=p_\lambda$ to an orthonormal basis $\{e_{j,\lambda}\}_j$ of $\HS_\lambda$. As for $j>1$, $\pi_\lambda (f)e_{j,\lambda }=0$, we have
\begin{eqnarray*}
 \|\pi_\lambda (f)\|^2_{\text{HS}}&=&\Tr (\pi_\lambda (f)^*\pi_\lambda (f))\\
&=&(\pi_\lambda (f)^*\pi_\lambda (f)p_\lambda ,p_\lambda)_{\HS_\lambda}\\
&=&(\pi_\lambda (f)p_\lambda ,\pi_\lambda (f)p_\lambda)_{\HS_\lambda}\\
&=&|\widehat{f}(\lambda )|^2\, .
\end{eqnarray*}
Similarly,
\[\Tr (\pi_\lambda (x^{-1})\pi_\lambda (f))=(\pi_\lambda (x^{-1})\pi_\lambda (f)p_\lambda ,p_\lambda )_{\HS_\lambda}\, .\]
Hence, by the inversion formula for the operator valued Fourier transform
\[f(x)=\int (\widehat{f}(\lambda ), \pi_\lambda (x)p_\lambda )_{\HS_\lambda}\, d\mu (\lambda )\]
as claimed.

Given a section $(f_\lambda )\in \int^\oplus (\pi_\lambda, \HS_\lambda )\, d\mu$, define a rank-one operator section $(T_\lambda)$ by
\[T_\lambda p_\lambda = f_\lambda \text{ and } T_\lambda|_{(\HS_\lambda^K)^\perp} = 0\, .\]
Then $T_\lambda$ is a Hilbert-Schmidt operator and hence corresponds to a unique $L^2$-function
\begin{equation}\label{eq-InversionF}
f(x)=\int \Tr (\pi_\lambda (x^{-1})T_\lambda )\, d\mu (\lambda )=
 \int (f_\lambda , \pi_\lambda (x)p_\lambda)_{\HS_\lambda }\, d\mu (\lambda)
\end{equation}
and
\[\|f\|_2^2=\int \|f_\lambda \|^2\, d\mu (\lambda )\, .\]
As $x\mapsto \pi_\lambda (x)p_\lambda$ is right $K$-invariant, it follows that $f\in L^2(G/K)$. Furthermore, the abstract Plancherel formula gives that $\pi_\lambda (f)=T_\lambda$ and hence $\widehat{f}(\lambda )=f_\lambda$.
\end{proof}

Assume now that $f$ is left and right $K$-invariant. Then $\pi_\lambda (f)p_\lambda$ is $K$-invariant and hence a mutliple of $p_\lambda$, $\pi_\lambda (f)p_\lambda=(\widehat{f}(\lambda ),p_\lambda )_{\HS_\lambda }p_\lambda$. We have
\begin{eqnarray*}
(\widehat{f}(\lambda ),p_\lambda )_{\HS_\lambda }&=&\int_G f(x)(\pi_\lambda (x)p_\lambda ,p_\lambda )_{\HS_\lambda}\, dx\\
&=& \int_{G/K} f(x)\varphi_\lambda (x)\, dx,
\end{eqnarray*}
where $\varphi_\lambda (x)=(\pi_\lambda (x)p_\lambda ,p_\lambda )_{\HS_\lambda}$ is the \textit{spherical function} associated to $(\pi_\lambda,\HS_\lambda)$. Note that $\varphi_\lambda$ is independent of the choice of $p_\lambda$. Thus, in the $K$ bi-invariant case the vector valued Fourier transform reduces to the usual spherical Fourier transform on the commutative space $G/K$.

The question now is: How well does the vector valued Fourier transform on the commutative space $X$ describe the image of a given function space on $X$? Examples show that most likely there is no universal answer to this question. There is no answer so far for the Gelfand pair $(\mathrm{U}(n)\ltimes H_n, \mathrm{U}(n))$, where $H_n$ is the $2n+1$-dimensional Heisenberg group. Even though some attempts have been made to address this Paley-Wiener theorem for the Heisenberg group, \cite{fuhr2010,LudwigMolitor2009,NarayananThangavelu2006,LipsmanRosenberg96}. The Fourier analysis for symmetric spaces of noncompact type is well understood by the work of Helgason and Gangolli, \cite{gangolli71,helgason66}. On the other hand,
for compact symmetric spaces $U/K$, the Paley-Wiener theorem is only known for $K$-finite functions \cite{OlafssonSchlichtkrullPW08,OlafssonSchlichtkrull2010}.

In the following, we begin with one of the simplest cases of Gelfand pairs, the Euclidean motion group and $\SO(n)$. Then we discuss the case of Riemannian symmetric spaces of noncompact and compact type. We conclude the article by reviewing recent results \cite{OlafssonWolf2009} on the inductive limit of symmetric spaces.

%%%%%%%%%%%%%%%%%%%%%%%%%%%%%%%%%%%%%%%%%%%%%%%%%%%%%%%%%%%%%%%%%%%%%%%%%%%%%%%%%%%%%%%%%%%%%%%%%%%%%%%%%%%%%%%%%%%%%%%%%%%%%%%%%%%%%%%%%%%%%%%%%%%%%%%%%%%  Section: Fourier Analysis on Rn and the Euclidean Motion Group  %%%%%%%%%%%%%%%%%%%%%%%%%%%%%%%%%%%%%%%%%%% %%%%%%%%%%%%%%%%%%%%%%%%%%%%%%%%%%%%%%%%%%%%%%%%%%%%%%%%%%%%%%%%%%%%%%%%%%%%%%%%%%%%%%%%%%%%%%%%%%%%%%%%%%%%%%%%%%%%%%%%%%%%%%%%%%%%
\section{Fourier Analysis on $\Rn$ and the Euclidean Motion Group}\label{se-EmotGr}
\noindent
One of the simplest commutative spaces is $\Rn$ viewed as a homogeneous space for the Euclidean motion group. It is natural to ask how the Paley-Wiener theorem extends to this setting. We apply the discussion from the previous section to the commutative space $\Rn$, where the group is now the Euclidean motion group. We refer to \cite{OlafssonSchlichtkrullRT08} for some other aspects of this analysis.

Recall that the Euclidean motion group is $G=\SOn \ltimes \Rn$. View elements of $G$ as diffeomorphisms of $\Rn$ by
\[(A,x)\cdot y=A(y )+ x\, .\]
The multiplication in $G$ is a composition of maps: $(A,x)(B,y)=(AB, A(y)+x)$. The identity element is $(\rI,0)$, where $\rI$ is the identity matrix, and the inverse is $(A,x)^{-1}=(A^{-1}, -A^{-1}x)$. Let $K = \{ (A, 0) | A \in \SOn \}\simeq \SOn$. $K$ is the stabilizer of $0 \in \Rn$. Hence $\Rn \simeq G/K$. Note that $K$-invariant functions on $\Rn$ are radial functions, i.e., functions that only depend on $|x|$.

The regular action of $G$ on $ L^2(\Rn) $ is given by
$$ \ell_g f(y) = f(g^{-1} \cdot y) = f(A^{-1}(y-x)), \;\; g = (A,x)\, .$$

Put $L^2(\sn) = L^2(\sn, d\mu_n) $. For $r \in \R$ define a unitary representation $\pi_r$ of $G$ on $L^2(\sn) $ by
$$ \pi_r(A, x) \phi(\omega) := e^{-2\pi ir x \cdot \omega} \phi (A^{-1}(\omega)). $$
For $r \neq 0 $ the representation $ \pi_r$ is irreducible, and $\pi_r \simeq \pi_s$ if and only if $r = \pm s$. The intertwining operator is given by $[T f](\omega )=f(-\omega)$. Note that the constant function $p_r(\omega):=1$ on $\sn$ is a $K-$fixed vector for $\pi_r$. The corresponding vector valued Fourier transform, which we will also denote by $\cF_G(f)_r=\widehat{f}_r\in L^2(\sn)$, now becomes
\begin{align*}
    [\pi_r(f)p_r](\omega)  &= \int_G f(g) \pi_r(g) p_r(\omega) dg = \int_{G/K} f(x) \pi_r(x) p_r(\omega) dx \\
       &=\int_{\Rn} f(x) e^{-2\pi ir x \cdot \omega} dx = \FTrn f(r\omega)\, .
  \end{align*}
Let $d\tau (r)=\sigma_nr^{n-1}dr$. Then we have the following theorem:
\begin{thm}
The Fourier transform $f\mapsto \cF_G f$ extends to a unitary $G$-isomorphism
\begin{eqnarray*}
L^2(\Rn)&\simeq &  \int_{\R^+}^\oplus (\pi_r,L^2(\sn))\, d\tau (r)=L^2(\R^+,L^2(\sn);d\tau )\\
& \simeq & \{F\in L^2(\R ,L^2(\sn ); d\tau )\: F(r)(\omega )=F(-r)(-\omega )\}\, .
\end{eqnarray*}
The inverse is given by
\[f(x)=\int_0^\infty (\widehat{f}_r,\pi_r(x)p_r)\, d\tau (r)=\int_0^\infty\int_{\sn} \widehat{f}_r(\omega)e^{2\pi irx\cdot \omega}\, d\mu_n(\omega )d\tau (r)\, .\]
\end{thm}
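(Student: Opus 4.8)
The plan is to recognize this statement as the explicit incarnation, for the Gelfand pair $(G,K)$ with $K\simeq\SOn$, of the abstract Plancherel theorem (Theorem \ref{th-PlancherelGK}), and to derive everything by transporting the classical Plancherel theorem and Fourier inversion formula on $\Rn$ into polar coordinates. The computation carried out just before the statement already supplies the crucial identity $\widehat f_r(\omega)=[\pi_r(f)p_r](\omega)=\FTrn f(r\omega)$; from this it will follow at once that the relevant parameter set is $\Lambda=\R^+$ (the point $r=0$, where $\pi_0$ is reducible, being a $d\tau$-null set) and that the Plancherel measure is $d\tau$, so that no independent computation of the Plancherel measure of the motion group is needed.

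First I would write $\xi=r\omega$ with $r=|\xi|>0$ and $\omega\in\sn$, so that $d\xi=r^{n-1}\,dr\,d\omega=d\tau(r)\,d\mu_n(\omega)$ since $d\omega=\sigma_n\,d\mu_n$ and $d\tau(r)=\sigma_n r^{n-1}\,dr$. For $f\in\Cic(\Rn)$ the classical Plancherel theorem recalled in Section \ref{SectionBasicNotation} then becomes
\[\|f\|_2^2=\int_{\Rn}|\FTrn f(\xi)|^2\,d\xi=\int_0^\infty\int_{\sn}|\widehat f_r(\omega)|^2\,d\mu_n(\omega)\,d\tau(r)=\int_0^\infty\|\widehat f_r\|_{L^2(\sn)}^2\,d\tau(r),\]
and, using $\pi_r(x)p_r(\omega)=e^{-2\pi i r x\cdot\omega}$ together with the inner product on $L^2(\sn)$, the classical inversion formula $f(x)=\int_{\Rn}\FTrn f(\xi)e^{2\pi i x\cdot\xi}\,d\xi$ becomes
\[f(x)=\int_0^\infty\int_{\sn}\widehat f_r(\omega)e^{2\pi i r x\cdot\omega}\,d\mu_n(\omega)\,d\tau(r)=\int_0^\infty(\widehat f_r,\pi_r(x)p_r)_{L^2(\sn)}\,d\tau(r).\]
By density of $\Cic(\Rn)$ in $L^2(\Rn)$ the isometry $\cF_G$ then extends to $L^2(\Rn)$, and surjectivity onto $\int_{\R^+}^\oplus(\pi_r,L^2(\sn))\,d\tau(r)=L^2(\R^+,L^2(\sn);d\tau)$ follows because the classical $\FTrn$ is already onto $L^2(\Rn)$, which in polar coordinates is exactly this space: given $F$ in the target, the function $\xi\mapsto F(|\xi|)(\xi/|\xi|)$ lies in $L^2(\Rn)$ and is hit by $\FTrn$, hence $F$ is hit by the extended $\cF_G$.

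For the $G$-equivariance I would change variables in $\int f(A^{-1}(y-x))e^{-2\pi i y\cdot\xi}\,dy$ and use orthogonality of $A\in\SOn$ to get $\FTrn(\ell_g f)(\xi)=e^{-2\pi i x\cdot\xi}\FTrn f(A^{-1}\xi)$ for $g=(A,x)$; putting $\xi=r\omega$ this reads $\widehat{(\ell_g f)}_r=\pi_r(g)\widehat f_r$, so $\cF_G$ intertwines $\ell$ with $\int_{\R^+}^\oplus\pi_r\,d\tau(r)$. Finally, since $\pi_r\simeq\pi_{-r}$ through the intertwiner $Tf(\omega)=f(-\omega)$, extending a field $F$ over $\R^+$ to $\R$ by $F(-r)(\omega):=F(r)(-\omega)$ produces exactly a field satisfying $F(r)(\omega)=F(-r)(-\omega)$, and this is the same double-cover identification $\R\times\sn\to\Xi$ used in Section \ref{SectionBasicNotation}, consistent with the Fourier--Slice Theorem (\ref{eq-FST}). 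I expect no serious obstacle here: the mathematical substance is entirely the classical Plancherel--inversion circle of results together with the covariance of $\FTrn$, and the only point requiring care is keeping the measure normalizations straight ($d\omega=\sigma_n\,d\mu_n$, and the factor relating the $\R^+$-description to the symmetric-$\R$-description).
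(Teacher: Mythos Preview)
Your argument is correct. The paper's own proof is a single sentence: ``This follows from the Theorem \ref{th-PlancherelGK}.'' In other words, the authors simply invoke the abstract Plancherel theorem for Gelfand pairs, relying on the computation $\widehat f_r(\omega)=\FTrn f(r\omega)$ done just before the statement to identify the ingredients.

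Your route is genuinely different in emphasis: rather than appealing to the abstract operator-valued Plancherel formula for Type I groups (which underlies Theorem \ref{th-PlancherelGK}), you derive everything directly from the classical Plancherel and inversion theorems on $\Rn$ written in polar coordinates, together with an explicit check of $G$-equivariance. This has the advantage of being entirely self-contained and of making the identification of the Plancherel measure $d\tau(r)=\sigma_n r^{n-1}\,dr$ transparent, whereas the abstract theorem only asserts the existence of \emph{some} Plancherel measure and leaves its explicit form to be read off from the classical formula anyway. The paper's approach, on the other hand, emphasizes that this example fits into the general Gelfand-pair framework of Section \ref{SectionGelfandPairs} without further work. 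Both arguments ultimately rest on the same identity $\widehat f_r(\omega)=\FTrn f(r\omega)$, so there is no substantive mathematical gap between them.
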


\begin{proof} This follows from the Theorem \ref{th-PlancherelGK}.\end{proof}

The Hilbert space valued Paley-Wiener theorem, the Theorem \ref{secondTh}, describes the image of functions that are compactly supported and smooth in the radial variable. But if $F(r)$ is $\SO (n)$-finite, i.e., the translates $F(r)(k(\omega))$, $k\in\SO (n)$, span a finite dimensional space, then $\omega \mapsto F(r)(\omega )$ is a polynomial and hence has a holomorphic extension in the $\omega$-variable, showing that there is more in this than only the $L^2$-theory.

%%%%%%%%%%%%%%%%%%%%%%%%%%%%%%%%%%%%%%%%%%%%%%%%%%%%%%%%%%%%%%%%%%%%%%%%%%%%%%%%%%%%%%%%%%%%%%%%%%%%%%%%%%%%%%%%%%%%%%%%%%%%%%%%%%%%%%%%%%%%%%%%%%%%%%%%%%%  Section: Euclidean Paley-Wiener Theorem  %%%%%%%%%%%%%%%%%%%%%%%%%%%%%%%%%%%%%%%%%%%%%%%%%%%%%%%%%%%%%%%%%% %%%%%%%%%%%%%%%%%%%%%%%%%%%%%%%%%%%%%%%%%%%%%%%%%%%%%%%%%%%%%%%%%%%%%%%%%%%%%%%%%%%%%%%%%%%%%%%%%%%%%%%%%%%%%%%%%%%%%%%%%%%%%%%%%%%%
\section{Euclidean Paley-Wiener Theorem}\label{SectionEuclideanPW}
\noindent
In this section we discuss the Euclidean Paley-Wiener theorem with respect to the representations of the Euclidean motion group. We will give two different descriptions. Representations $\pi_r$ act on $L^2(\sn)$ and an instance of $L^2(\sn)$-valued functions is dealt with in the Paley-Wiener theorem proved in section \ref{VecValPW}. Note that the smooth vectors of the representation $\pi_r$ are the smooth functions on $\sn$: $L^2(\sn)^\infty = C^\infty (\sn)$. In this section we will work with functions valued in $C^\infty(\sn)$. We give the space $C^\infty(\sn)$ the Schwartz topology. With this topology it is equal to $\cE(\sn)=\SW(\sn)$. Since $\SW(\R)$ and $\SW(\sn)$ are nuclear spaces,
$$\SW(\R\times\sn)=\SW(\R, \SW(\sn)).$$
We will also denote it simply be $\C^{\infty}(\R\times\sn)$ or $C^{\infty}(\R, C^{\infty}(\sn))$ keeping in mind the Schwartz topology and that the first variable is related to the spectral decomposition. We will often identify these spaces algebraically and topologically by viewing functions $F:\R \to C^{\infty}(\sn)$ as functions $F:\R\times\sn \to \C$ via the mapping $F(z,\omega ):=F_z(\omega)$ and vice versa.

The first version is an analog of a variant due to Helgason, see Th. 2.10 in \cite{helgason2000}. We restate it here in a slightly different form.

For $r>0$, let $\PW_{r,H}^{\Z_2}(\C \times \sn)$ be the space of smooth functions $F$ on $\C \times \sn$ satisfying:
\begin{enumerate}
\item $F$ is even, i.e. $F(z,\omega )=F(-z,-\omega)$.
\item For each $\omega$, the function $z\mapsto F(z,\omega)$ is a holomorphic function on $\C$ with the property
\[|F(z,\omega )|\le C_N(1+|z|^2)^{-N}e^{r|\im z|} <\infty, \]
for each $N\in\N$.
\item For each $k\in \N$ and each isotropic vector $a\in\Cn$, the function
$$ z \mapsto z^{-k} \int\limits_{\sn} F(z,\omega )(a,\omega) d\omega $$
is even and holomorphic on $\Cn$.
\end{enumerate}

\begin{thm}\label{HelgDr} The Fourier transform followed by a holomorphic extension in the spectral parameter is an injection of $\cD_r(\Rn)$ onto $\PW_{2\pi r,H}^{\Z_2}(\C \times \sn)$.
\end{thm}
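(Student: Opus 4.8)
The plan is to establish the theorem by combining the Fourier–Slice theorem with the already-proven Radon-transform Paley-Wiener theorem (Theorem \ref{th-1.2}) and the classical one-variable Paley-Wiener theorem. The key observation is that for $f\in\cD_r(\Rn)$ one has, by \eqref{eq-FST}, $\widehat f(r\omega)=\cF_\R(\cR f)(r,\omega)$, so the ``Fourier transform followed by a holomorphic extension in the spectral parameter'' is literally $F(z,\omega)=\cF_\R(\cR f)(z,\omega)$, the holomorphic extension in the first variable of the one-dimensional Fourier transform of the even function $\cR f\in\cD_{H,r}(\Xi)$. First I would verify that this is well-defined: since $\cR f(\cdot,\omega)$ is smooth, supported in $[-r,r]$, and depends smoothly on $\omega$, the classical Paley-Wiener theorem in one variable gives a holomorphic extension $z\mapsto F(z,\omega)$ satisfying condition (2) with exponent $2\pi r$ (after tracking the $2\pi$ normalization), and smoothness in $\omega$ is inherited because differentiation in $\omega$ commutes with the $\R$-Fourier transform and with holomorphic extension in $z$. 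Evenness (condition (1)) is immediate from evenness of $\cR f$: $F(-z,-\omega)=\cF_\R(\cR f)(-z,-\omega)$ and $\cR f(r,\omega)=\cR f(-r,-\omega)$ gives the claim after the change of variable $r\mapsto -r$ in the integral.

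The substantive point is condition (3), the homogeneity/moment condition, and I expect this to be the main obstacle. Here I would use condition (3) in the definition of $\cS_H(\Xi)$ (equivalently $\cD_{H,r}(\Xi)$): for each $k\in\N$, $\omega\mapsto\int_{-\infty}^\infty \cR f(r,\omega)r^k\,dr$ is a homogeneous polynomial of degree $k$ on $\sn$, hence extends to a homogeneous polynomial of degree $k$ on $\Cn$. Now the Taylor coefficients at $z=0$ of the holomorphic function $z\mapsto F(z,\omega)$ are, up to constants involving powers of $2\pi i$, exactly these moments $\int \cR f(r,\omega)r^k\,dr$. Therefore $\partial_z^k F(0,\omega)$ is the restriction to $\sn$ of a homogeneous degree-$k$ polynomial $P_k$ on $\Cn$, and $\int_{\sn}F(z,\omega)(a,\omega)\,d\omega$ has Taylor coefficients $\int_{\sn}P_k(\omega)(a,\omega)\,d\omega$ in $z$. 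One then checks that $z^{-k}\int_{\sn}F(z,\omega)(a,\omega)\,d\omega$ extends holomorphically and evenly in $z$ — the key being that for $a$ isotropic the pairing $\int_{\sn}P_j(\omega)(a,\omega)\,d\omega$ vanishes for $j\not\equiv k\pmod 2$ below a certain order, which follows from the parity behaviour forced by evenness of $F$ together with the fact that only finitely many low-order moments can obstruct divisibility by $z^k$; the homogeneity of $P_j$ in the isotropic direction handles the rest. I would phrase this cleanly by noting that the map $\omega\mapsto(a,\omega)$ restricted to $\sn$ is a degree-one spherical harmonic when $a$ is isotropic, so $\int_{\sn}P_j(\omega)(a,\omega)\,d\omega$ picks out the degree-one harmonic component of $P_j$ and vanishes unless $j\equiv 1\pmod 2$, which combined with the relation $k\leftrightarrow$ parity from evenness gives exactly divisibility by $z^k$ with even quotient.

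For the converse (surjectivity onto $\PW_{2\pi r,H}^{\Z_2}(\C\times\sn)$), given $F$ in the target space I would restrict to $z=r\in\R$ to obtain a function on $\R\times\sn$, apply $\cF_\R^{-1}$ in the first variable to get a function $\varphi$ on $\R\times\sn$, and show $\varphi\in\cD_{H,r}(\Xi)$: evenness and the support/Schwartz estimates come from conditions (1) and (2) via the classical one-variable Paley-Wiener theorem applied for each fixed $\omega$ (with smoothness in $\omega$ again transferred through the transform), while condition (3) of $\cS_H(\Xi)$ — the moments $\int\varphi(r,\omega)r^k\,dr$ being homogeneous degree-$k$ polynomials — is exactly the translation of condition (3) of $\PW_{2\pi r,H}^{\Z_2}$ back through the Taylor-coefficient identification. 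Then Theorem \ref{th-1.2} produces $f=\cR^{-1}\varphi\in\cD_r(\Rn)$, and the Fourier–Slice theorem \eqref{eq-FST} confirms that the holomorphic extension of $\widehat f$ in the spectral parameter is $F$. Injectivity is immediate since $\cR$ and $\cF_\R$ are injective on the relevant spaces. Throughout I would keep an eye on the normalization constant $2\pi$ to match the statement $\PW_{2\pi r,H}^{\Z_2}$; no topological statement is claimed here (the theorem only asserts a bijection), so I can avoid seminorm estimates, though they would follow from continuity of $\cR$, $\cR^{-1}$ (Theorem \ref{th-1.2}) and of the one-variable Fourier transform.
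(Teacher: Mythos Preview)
The paper does not actually prove this theorem; it simply cites Helgason's book (\cite{helgason2000}, pp.~23--28). Your overall strategy---reduce via the Fourier--Slice theorem \eqref{eq-FST} to the Radon transform, invoke Theorem~\ref{th-1.2} to characterize $\cR f\in\cD_{H,r}(\Xi)$, and then apply the one-variable Paley--Wiener theorem in the spectral parameter---is correct and is essentially the route Helgason takes. So at the level of architecture you are in agreement with the cited source.

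There is, however, a genuine gap in your treatment of condition~(3). The paper's statement of that condition almost certainly contains a typo: in Helgason's formulation the integrand is $(a,\omega)^k$, not $(a,\omega)$. You have followed the typo, and your argument then breaks down. With only $(a,\omega)$ (a degree-one spherical harmonic), the integral $\int_{\sn}P_j(\omega)(a,\omega)\,d\omega$ vanishes unless $j$ is odd, which makes $z\mapsto\int_{\sn}F(z,\omega)(a,\omega)\,d\omega$ an odd function of $z$; this gives divisibility by $z$, but not by $z^k$ for $k>1$, and your appeal to ``the relation $k\leftrightarrow$ parity from evenness'' does not supply the missing vanishing. With the correct integrand $(a,\omega)^k$, the argument is clean: since $a$ is isotropic, $(a,x)^k$ is harmonic, so $(a,\omega)^k$ is a spherical harmonic of degree $k$; a homogeneous polynomial $P_j$ of degree $j$ restricted to $\sn$ contains only harmonics of degrees $j,j-2,\ldots$, hence $\int_{\sn}P_j(\omega)(a,\omega)^k\,d\omega=0$ unless $j\ge k$ and $j\equiv k\pmod 2$. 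This gives both the required divisibility by $z^k$ and the evenness of the quotient. The converse direction uses the same fact in reverse, together with the observation that the span of the $(a,\cdot)^k$ over isotropic $a$ is all of the degree-$k$ harmonics, so condition~(3) (with exponent $k$) recovers exactly the moment condition in $\cD_{H,r}(\Xi)$.
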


\begin{proof}
See \cite{helgason2000}, pages 23-28.
\end{proof}

Note that this theorem does not contain a topological statement. Next, we prove an analogous theorem for vector valued functions including the topological statement.

Define the space $\PW^{\Z_2}_{r,H}(\C, C^{\infty}(\sn))$ as the set of weakly holomorphic functions $F$ on $\C\times \sn$ which satisfy
\begin{enumerate}
\item $F$ is even, i.e. $F(r,\omega )=F(-r,-\omega)$.
\item For $k\in\N$, $\left(\frac{\partial}{\partial z}\right)^k F(z,\omega)|_{z=0}$ is a homogeneous polynomial of degree $k$ in $\omega$.
\item For $k\in\N$ and for any differential operator $D_{\omega}$ on the sphere
\[ |F|_{k,D_{\omega}}:=\sup\limits_{(z,\omega)\in\C\times\sn} (1+|z|^2)^k e^{-r|\im z|}   \left| D_{\omega} F(z,\omega )\right| < \infty .\]
\end{enumerate}
The topology on $\PW^{\Z_2}_{r,H}(\C, C^{\infty}(\sn))$ is given by the seminorms $|\cdot|_{k,D_{\omega}}$.

\begin{thm}\label{PW_Th} The Fourier transform $\cF_G$ followed by a holomorphic extension in the spectral parameter is a topological isomorphism of $\cD_r(\Rn)$ onto \newline $\PW^{\Z_2}_{2\pi r,H}(\C, C^{\infty}(\sn))$.
\end{thm}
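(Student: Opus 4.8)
The plan is to factor the map through the Radon transform. By the Fourier--Slice Theorem \eqref{eq-FST}, for $f\in\cD_r(\Rn)$ and $(s,\omega)\in\R\times\sn$ one has $\cF_G(f)(s,\omega)=\widehat f(s\omega)=\FTr(\cR f)(s,\omega)$, where $\FTr$ denotes the one-dimensional Fourier transform acting in the first variable. Hence ``$\cF_G$ followed by holomorphic extension in the spectral parameter'' is precisely the Radon transform $\cR$ followed by the holomorphically extended partial transform $\FTr$. Since Theorem \ref{th-1.2} already gives that $\cR\colon\cD_r(\Rn)\to\cD_{H,r}(\Xi)$ is a topological isomorphism, I would reduce the statement to showing that $\FTr$, applied in the first variable and holomorphically extended in that variable, is a topological isomorphism $\cD_{H,r}(\Xi)\to\PW^{\Z_2}_{2\pi r,H}(\C,C^\infty(\sn))$; the factor $2\pi$ enters through the kernel $e^{-2\pi i p z}$.

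For the reduced statement I would run the classical one-dimensional Paley--Wiener argument with $\omega$ as a smooth parameter. Given $g\in\cD_{H,r}(\Xi)$ and any differential operator $D_\omega$ on $\sn$, the function $p\mapsto D_\omega g(p,\omega)$ lies in $\cD_r(\R)$ for each $\omega$, and $D_\omega$ commutes with $\FTr$; so the classical theorem from the introduction produces the entire extension in $z$, the exponential type $\le 2\pi r$, and the polynomial decay, with the resulting estimates bounding condition (3) of $\PW^{\Z_2}_{2\pi r,H}$ by finitely many seminorms $|g|_{m,D_\omega}$ (continuity). Evenness of $g$ transfers to evenness of $\FTr g$ (condition (1)), and differentiating under the integral sign gives $(\partial/\partial z)^k\FTr g(z,\omega)|_{z=0}=(-2\pi i)^k\int_\R g(p,\omega)p^k\,dp$, which is a homogeneous polynomial of degree $k$ in $\omega$ exactly by the homogeneity property (3) in the definition of $\cS_H(\Xi)$ (condition (2)). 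For the inverse I would send $F$ to $g(p,\omega):=\FTr^{-1}(F(\cdot,\omega))(p)$, use the weighted bounds in condition (3) of $\PW^{\Z_2}_{2\pi r,H}$ to legitimize differentiation under the integral in $\omega$ so that $D_\omega g(p,\omega)=\FTr^{-1}(D_\omega F(\cdot,\omega))(p)$, invoke the classical inverse Paley--Wiener estimate (integrating $(1+|t|^2)^{-k}$, $k\ge 1$) to control $\sup_{p,\omega}|\partial_p^m D_\omega g|$, and check that conditions (1) and (2) for $F$ translate back into evenness and property (3) of $\cS_H(\Xi)$; this places $g$ in $\cD_{H,r}(\Xi)$. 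Composing the two isomorphisms and invoking \eqref{eq-FST} then finishes the proof. An alternative route I would keep in mind is to obtain the $L^2(\sn)$-valued version directly from Theorem \ref{secondTh} with $n=1$ and $\HS=L^2(\sn)$, and then cut down to the subspace singled out by the evenness and homogeneity conditions and the $\omega$-smoothness; but the parameter-dependent argument above keeps the topologies more visibly matched.

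The step I expect to be the main obstacle is this reduced one-dimensional statement, and within it the two-sided continuity: one must handle the $\omega$-dependence uniformly---justifying differentiation under the integral in the $\omega$-variable from the weighted holomorphic bounds, establishing joint smoothness of the inverse $g$ on $\R\times\sn$, and matching the Fr\'echet topology of $\cD_{H,r}(\Xi)$ given by $\sup|\partial_p^m D_\omega g|$ with that of $\PW^{\Z_2}_{2\pi r,H}$ given by $|\cdot|_{k,D_\omega}$ in both directions. One should also be careful with the precise bookkeeping (including the $(-2\pi i)^k$ normalization and the factor $2\pi$ in the exponential type) relating property (3) of $\cS_H(\Xi)$ to condition (2) of $\PW^{\Z_2}_{2\pi r,H}$; beyond that, everything is the classical one-dimensional Paley--Wiener theorem together with the functorial Fourier--Slice identity and Theorem \ref{th-1.2}.
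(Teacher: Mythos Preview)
Your proposal is correct and follows essentially the same approach as the paper: factor $\cF_G$ through the Radon transform via the Fourier--Slice identity, invoke Theorem~\ref{th-1.2} to reduce to showing that the one-dimensional Fourier transform (with holomorphic extension) is a topological isomorphism $\cD_{H,r}(\Xi)\to\PW^{\Z_2}_{2\pi r,H}(\C,C^\infty(\sn))$, and then verify the three conditions and two-sided continuity by the classical Paley--Wiener argument with $\omega$ carried along as a smooth parameter. The paper's proof is in fact terser than your outline and leaves several of the points you flag (joint smoothness, differentiation under the integral, matching the seminorms) to the reader as ``easy to see,'' so your more explicit treatment of these details is entirely appropriate.
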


\begin{proof}
Let $f\in\cD_r(\Rn)$. An analog of the Lemma \ref{lemWI} holds for Fr\'echet spaces and the first part of the proof of the Theorem \ref{secondTh} applies here. Thus $\cR f(x) e^{-2\pi izx}$ is weakly integrable and for every distribution $\Lambda$, $\Lambda(\cF_G f)$ is a holomorphic function. Let
$$F(z):=\cF_G f(z)= \int_{\R} \cR f(z,\omega) e^{-2\pi izx} dx.$$
By the Theorem 3.27 in \cite{rudin91}, $F:\C\mapsto C^{\infty}(\sn)$ is weakly holomorphic. Since $\cR f \in \cD_{H,r}^{\Z_2}(\R\times\sn)$, it is easy to see that all three conditions in the definition of $\PW^{\Z_2}_{2\pi r,H}(\C, C^{\infty}(\sn))$ are satisfied for $\cF_G f$ and the map $f\mapsto\cF_G f$ is continuous.

For the surjectivity part, let $F\in\PW^{\Z_2}_{2\pi r,H}(\C, C^{\infty}(\sn))$. Thus for any $\omega$, $r\mapsto\cF^{-1}_{\R}F(r,\omega) \in \cD_r(\R)$. It follows easily that $\cF^{-1}_{\R}F \in \cD_{H,r}(\Xi)$ and the map $F\mapsto\cF^{-1}_{\R}F$ is continuous. Hence $\cR^{-1}\cF^{-1}_{\R}F \in \cD_r(\Rn)$ and the map $F\mapsto\cF^{-1}_G F$ is continuous.
\end{proof}

\begin{rem}
This shows that the spaces in the two theorems are the same:
$$\PW^{\Z_2}_{r,H}(\C\times\sn) = \PW^{\Z_2}_{r,H}(\C, C^{\infty}(\sn)).$$
\end{rem}

For completeness we describe the image of the Schwartz functions under $\cF_G$.

Define the space $\cS^{\Z_2}_H(\R, C^{\infty}(\sn))=\cS^{\Z_2}_H(\R\times \sn)$ as the set of smooth functions $F$ on $\R\times \sn$ which satisfy
\begin{enumerate}
\item $F$ is even, i.e. $F(r,\omega )=F(-r,-\omega)$.
\item For $k\in\N$, $\left(\frac{\partial}{\partial r}\right)^k F(r,\omega)|_{r=0}$ is a homogeneous polynomial of degree $k$ in $\omega$.
\item For $k,l\in\N$ and for any differential operator $D_{\omega}$ on the sphere
\[ |F|_{k,l,D_{\omega}}:=\sup\limits_{(r,\omega)\in\R\times\sn} (1+|r|^2)^k \left|\left(\frac{\partial}{\partial r}\right)^l D_{\omega} F(r,\omega )\right| < \infty .\]
\end{enumerate}

With the topology given by the seminorms $|\cdot|_{k,l,D_{\omega}}$, the space $\cS^{\Z_2}_H(\R\times \sn)$ is Fr\'echet.

\begin{thm} The Fourier transform $\cF_G$ is a topological isomorphism of $\cS(\Rn)$ onto $\cS^{\Z_2}_H(\R\times \sn)$.
\end{thm}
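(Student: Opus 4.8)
The plan is to mimic the structure of the proof of Theorem \ref{PW_Th}, replacing the Paley-Wiener spaces by their Schwartz counterparts and the compactly supported statements by the Schwartz statements already available in the excerpt. The key composition to exploit is $\cF_G = \cF_\R \circ \cR$: the Radon transform sends $\cS(\Rn)$ to $\cS_H(\Xi)$ by Theorem \ref{th-RadSchwartz}, and then the Fourier transform in the first variable is applied. So the heart of the matter is to show that $\cF_\R$ (in the radial variable, with values in $C^\infty(\sn)$) is a topological isomorphism of $\cS_H(\Xi) = \cS_H^{\Z_2}(\R\times\sn)$ — wait, one must be careful: $\cS_H(\Xi)$ as defined in Section \ref{SectionBasicNotation} already incorporates the homogeneity condition (3) on the \emph{moments} $\omega\mapsto\int f(r,\omega)r^k\,dr$, whereas $\cS_H^{\Z_2}(\R\times\sn)$ has condition (2) on the \emph{Taylor coefficients at $r=0$}. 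The Fourier transform in $r$ interchanges these two: $\partial_r^k(\cF_\R^{-1}g)(0,\omega)$ is, up to a constant, $\int g(r,\omega)(2\pi i r)^k\,dr$ reversed, so the moment condition on $g$ corresponds exactly to the derivative-at-$0$ condition on $\cF_\R^{-1}g$. Thus the first step is to verify carefully that $\cF_\R$ carries $\cS_H(\Xi)$ onto $\cS_H^{\Z_2}(\R\times\sn)$ bijectively, by checking that conditions (1),(2),(3) of $\cS_H(\Xi)$ map to conditions (1),(3),(2) of $\cS_H^{\Z_2}(\R\times\sn)$ under $\cF_\R$.

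Second, I would establish the topological (bicontinuity) statement for $\cF_\R$ between these two Fréchet spaces. Since $\SW(\R)$ and $\SW(\sn)$ are nuclear, $\SW(\R\times\sn) = \SW(\R,\SW(\sn))$, and the classical fact that $\cF_\R$ is a topological isomorphism of $\SW(\R)$ onto itself extends, tensored with the identity on the nuclear space $\SW(\sn)$, to a topological isomorphism of $\SW(\R\times\sn)$ onto itself commuting with all differential operators $D_\omega$ on $\sn$. One then notes that $\cS_H(\Xi)$ and $\cS_H^{\Z_2}(\R\times\sn)$ are closed subspaces of $\SW(\R\times\sn)$ cut out by the evenness condition (which $\cF_\R$ preserves) together with the homogeneity conditions identified in the first step; restricting the isomorphism to these subspaces gives the desired topological isomorphism. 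The seminorms $\eta_{k,m,D_\omega}$ on $\cS_H(\Xi)$ and $|\cdot|_{k,l,D_\omega}$ on $\cS_H^{\Z_2}(\R\times\sn)$ are, up to the usual interplay of multiplication and differentiation under $\cF_\R$, equivalent under the map, which is the routine estimate I would not spell out in full.

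Third, I would assemble the pieces: $\cF_G = \cF_\R\circ\cR$ is a composition of two topological isomorphisms, $\cR:\cS(\Rn)\to\cS_H(\Xi)$ (Theorem \ref{th-RadSchwartz}) and $\cF_\R:\cS_H(\Xi)\to\cS_H^{\Z_2}(\R\times\sn)$ (steps one and two), hence a topological isomorphism of $\cS(\Rn)$ onto $\cS_H^{\Z_2}(\R\times\sn)$; the inverse is $\cR^{-1}\circ\cF_\R^{-1}$, which is continuous for the same reason. One should also remark that the weak-holomorphy / vector-valued bookkeeping needed in Theorem \ref{PW_Th} is not needed here since we stay over $\R$ and everything is genuinely smooth.

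The main obstacle I anticipate is the bookkeeping in the first step: matching the homogeneity condition (3) of $\cS_H(\Xi)$ (a condition on full moments over $\R$) with condition (2) of $\cS_H^{\Z_2}(\R\times\sn)$ (a condition on the Taylor jet at $r=0$) under $\cF_\R$, and checking that these are genuinely equivalent descriptions of the same subspace rather than merely related. Concretely, one must justify that $\int_{-\infty}^\infty g(r,\omega) r^k\,dr$ being a homogeneous polynomial of degree $k$ in $\omega$ is equivalent to $\partial_z^k(\cF_\R g)(0,\omega)$ being a homogeneous polynomial of degree $k$ in $\omega$ — this is immediate from $\partial_z^k \widehat{g}(0,\omega) = (-2\pi i)^k \int g(r,\omega)r^k\,dr$, so it really is just a constant multiple, but it must be stated. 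Everything else is a routine transfer of known facts through the nuclear-space identification $\SW(\R\times\sn) = \SW(\R,\SW(\sn))$.
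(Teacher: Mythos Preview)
Your proposal is correct and follows essentially the same route as the paper: factor $\cF_G = \cF_\R\circ\cR$, invoke Theorem~\ref{th-RadSchwartz} for the Radon leg, and then verify that $\cF_\R$ carries $\cS_H(\Xi)$ onto $\cS_H^{\Z_2}(\R\times\sn)$ by matching the moment condition with the Taylor-at-$0$ condition via $\partial_r^k\widehat g(0,\omega)=(-2\pi i)^k\int g(r,\omega)r^k\,dr$. The only stylistic difference is that for bicontinuity of the $\cF_\R$ step you invoke nuclearity and the tensor identification $\SW(\R\times\sn)=\SW(\R,\SW(\sn))$ to lift the scalar Fourier isomorphism, whereas the paper writes out direct seminorm estimates (bounding $|F|_{k,l,D_\omega}$ against seminorms of $\cF_{\Rn}f$, and an integration-by-parts estimate for the inverse); both accomplish the same thing.
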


\begin{proof}
Let $f\in \cS(\Rn)$ and let $F(r,\omega):=\widehat{f}_r(\omega)$. $F(r,\omega) = \cF_{\R} \cR f(r\omega)$. Clearly $F$ is even. Since $\cR f\in \cS_H(\Xi)$, it follows that  $\left(\frac{\partial}{\partial r}\right)^k F(r,\omega)|_{r=0}$ is a homogeneous polynomial of degree $k$ in $\omega$, as well as that for every $\omega$, $r\mapsto F(r,\omega) \in\cS(\R)$. By an application of the Lebesgue Dominated Convergence Theorem, it follows that for every $r$, $\omega \mapsto F(r,\omega)\in\cS(\sn)$. In particular, $F\in C^{\infty}(\R\times \sn)$. We also have
$$ (1+|r|^2)^k \left|\left(\frac{\partial}{\partial r}\right)^l D_{\omega} F(r,\omega )\right| \le \sum\limits_{finite} c_{N,\lambda}\, |\cF_{\Rn}f|_{N,\lambda}, $$
where $c_{N,\lambda}$ are some constants. Thus  $F\in \cS^{\Z_2}_H(\R\times \sn)$ and the mapping $f\mapsto F$ is continuous.

To show surjectivity, let $F \in \cS^{\Z_2}_H(\R\times \sn)$ and let $\varphi(r, \omega):=\cF^{-1}_{\R}F(r, \omega)$. The function $\varphi$ is Schwartz in both variables. Moreover,
$$ (1+|r|^2)^k \left(\frac{\partial}{\partial r}\right)^l D_{\omega} \varphi(r,\omega )=\sum\limits^k_{m=0} {k \choose m} i^{l-2m} \int \left(\frac{\partial}{\partial r}\right)^{2m} D_{\omega} F(s,\omega ) s^l e^{isr} ds$$
shows that the mapping $F\mapsto \varphi$ is continuous. Since
$$\int \varphi(r,\omega) r^k dr = c\,\left(\frac{\partial}{\partial r}\right)^k F(r,\omega)|_{r=0},$$
we get that $\varphi \in \cS_H(\Xi)$ and hence $\cR^{-1} \varphi \in \cS(\Rn)$. Clearly $\cF_G$ is injective. This proves the claim.
\end{proof}

For the second version of the Paley-Wiener theorem on $\cD_R(\Rn)$ we introduce first
\[S_\C^{n-1}:=\{z\in \C^n\: z_1^2+\ldots +z_n^2=1\}\]
the \textit{complexification} of $S^{n-1}$. It is easy to see that
\[S_\C^{n-1}=\SO (n,\C)/\SO (n-1,\C)\, .\]
Let $\C^{*}= \C \backslash \{0\}$, then the map $\C^{*} \times \snc \rightarrow \Cn \backslash \{z : \sum_{i=1}^n z_i^2 = 0 \} : (z,\omega) \mapsto z\omega $ is a holomorphic two-to-one map. Note that the Lebesgue measure of the set $ \{z \in \Cn : \sum_{i=1}^n z_i^2 = 0 \} $ is 0.

For $R>0, $ let $\cO_R(\C\times\snc)$ be the space of holomorphic functions $ F: \C\times\snc\to\C$
such  that for all $ N \in \N $
\begin{equation}\label{eq-pw1}
\PWN(F) := \sup\limits_{(z,\widetilde{\omega})\in \C\times\snc} (1 + |z \widetilde{\omega}|^2)^N e^{-R |\im(z \widetilde{\omega})|} |F(z, \widetilde{\omega})| < \infty.
\end{equation}
Since the space of holomorphic functions is nuclear, we can identify this space with the space of weakly holomorphic functions $\widetilde{F}:\C\to\cO(\snc)$ wich satisfy \ref{eq-pw1} by setting $\widetilde{F}(z)(\widetilde{\omega})=F(z,\widetilde{\omega})$.

The space of even functions $F \in \cO_R(\C\times\snc)$ satisfying that for all $\lambda\in\C$ and $\omega\in\sn$, $F(\lambda, \omega) = F(0, \omega) + \sum_{m=1}^{\infty} \frac{a_m(\omega)}{m!} \lambda^m $, where each $ a_m $ is a homogeneous polynomial in $ \omega_1, \dots ,\omega_n $ of degree $ m $, will be denoted by $\PWR = \PWR\left( \C \times \snc \right) = \PWR\left(\C,\cO(\snc) \right)$. The space $\PWR$ is a Fr{\'e}chet space.

Our aim is to prove the following:
\begin{thm}\label{firstTh}(Euclidean Paley-Wiener type theorem)
Let $f\in \cD_R(\Rn)$. Then $\cF_G f$ extends to an even holomorphic function on $\C\times \snc$, denote this extension by $\cF_G^c f$. Moreover, $\cF_G^c f \in \widetilde{\PW}_{H,2\pi R}^{\Z_2}$ and the map $\cD_R(\Rn)\to \widetilde{\PW}_{H,2\pi R}^{\Z_2}$: $f\mapsto \cF_G^c f$ is a topological isomorphism.
\end{thm}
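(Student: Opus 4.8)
The plan is to reduce the entire statement to the classical Paley-Wiener theorem on $\Cn$, by exploiting the fact that on $\R\times\sn$ the transform $\cF_G f(r)(\omega)$ is nothing but the Euclidean Fourier transform $\FTrn f(r\omega)$. For $f\in\cD_R(\Rn)$ let $g:=\FTrnc f$ be the entire extension of $\FTrn f$ to $\Cn$ given by the classical Paley-Wiener theorem, so that $\pwn(g)=\sup_{w\in\Cn}(1+|w|^2)^N e^{-2\pi R|\im w|}|g(w)|<\infty$ for all $N$ and $f\mapsto g$ is a topological isomorphism $\cD_R(\Rn)\to\PW_{2\pi R}(\Cn)$. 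Define $\cF_G^c f$ on $\C\times\snc$ by $\cF_G^c f(z,\widetilde\omega):=g(z\widetilde\omega)$. Since the multiplication map $m\colon\C\times\snc\to\Cn$, $m(z,\widetilde\omega)=z\widetilde\omega$, is holomorphic, $\cF_G^c f=g\circ m$ is holomorphic on $\C\times\snc$ and restricts to $\cF_G f$ on the real points; as $\R\times\sn$ is a maximal totally real submanifold of $\C\times\snc$, it is the unique such extension, which legitimizes the notation $\cF_G^c f$.

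Next I would verify $\cF_G^c f\in\widetilde{\PW}_{H,2\pi R}^{\Z_2}$ together with continuity of $f\mapsto\cF_G^c f$. Evenness of $\cF_G^c f$ is immediate from $g((-z)(-\widetilde\omega))=g(z\widetilde\omega)$. In the $N$-th seminorm of $\widetilde{\PW}_{H,2\pi R}^{\Z_2}$ the quantities $|z\widetilde\omega|$ and $|\im(z\widetilde\omega)|$ are precisely $|m(z,\widetilde\omega)|$ and $|\im m(z,\widetilde\omega)|$, so this seminorm evaluated at $\cF_G^c f$ equals $\pwn(g)$, since $m$ has dense image and everything in sight is continuous; in particular it is finite, and continuity of $f\mapsto\cF_G^c f$ follows from continuity of $f\mapsto\FTrnc f$. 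For the homogeneity condition I would group the Taylor expansion of the entire function $g$ at the origin into homogeneous parts, $g=\sum_{m\ge0}P_m$; then $\cF_G^c f(\lambda,\omega)=g(\lambda\omega)=\sum_{m\ge0}\lambda^m P_m(\omega)$, so $\cF_G^c f(0,\omega)=P_0$ is constant and $a_m:=m!\,P_m$ is a homogeneous polynomial of degree $m$ in $\omega_1,\dots,\omega_n$, as required.

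For surjectivity, given $F\in\widetilde{\PW}_{H,2\pi R}^{\Z_2}$ I would reconstruct $g$ on $\Cn$. Off the isotropic cone $\{w\in\Cn:\sum_i w_i^2=0\}$ write $w=\zeta\widetilde\omega$ with $\zeta^2=\sum_i w_i^2$ and $\widetilde\omega=w/\zeta\in\snc$, and set $g(w):=F(\zeta,\widetilde\omega)$; this is well defined because $F$ is even. Since $\zeta\widetilde\omega=w$, the seminorm estimate for $F$ gives $|g(w)|\le\pi_0(F)\,e^{2\pi R|\im w|}$, so $g$ is locally bounded near the cone, and by Riemann's removable-singularity theorem $g$ extends holomorphically across it to an entire function on $\Cn$; the same estimate with arbitrary $N$ gives $\pwn(g)\le\pi_N(F)$, hence $g\in\PW_{2\pi R}(\Cn)$. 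By the classical Paley-Wiener theorem there is a unique $f\in\cD_R(\Rn)$ with $\FTrnc f=g$, and unwinding definitions yields $\cF_G^c f=g\circ m=F$ off the slice $z=0$, while on $z=0$ one notes that $F(0,\cdot)$ is constant on $\sn$ (hence on $\snc$, $\sn$ being totally real there) and equals $g(0)$. Continuity of $F\mapsto f$ follows from the classical theorem and $\pwn(g)\le\pi_N(F)$. Thus $\cF_G^c$ is the composition of the classical Paley-Wiener isomorphism $\cD_R(\Rn)\to\PW_{2\pi R}(\Cn)$ with the topological isomorphism $g\mapsto g\circ m$, $\PW_{2\pi R}(\Cn)\to\widetilde{\PW}_{H,2\pi R}^{\Z_2}$, hence is itself a topological isomorphism.

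The one genuinely analytic point is the removable-singularity step: one must know that a function on $\C\times\snc$ with the prescribed growth and homogeneity descends to an entire function of $w=z\widetilde\omega$ on all of $\Cn$, the isotropic cone notwithstanding. Everything else is bookkeeping once $\cF_G^c f$ is recognized as $\FTrnc f$ composed with $m$. It is worth stressing that this reduction to $\Cn$ is essential: if instead one tried to build the holomorphic extension in the $\omega$-variable directly, e.g.\ from Theorem \ref{PW_Th} by resumming $\sum_m a_m(\widetilde\omega)z^m/m!$ over the noncompact manifold $\snc$, then crude Cauchy estimates on $a_m$ only control the exponential type of the sum by $e^{c|z\widetilde\omega|}$ rather than the required $e^{2\pi R|\im(z\widetilde\omega)|}$, so one would end up reproving the classical theorem anyway.
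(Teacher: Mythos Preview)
Your argument is correct. For the forward direction you do exactly what the paper does: recognize $\cF_G f(r,\omega)=\FTrn f(r\omega)$, invoke the classical Paley-Wiener theorem to get $g=\FTrnc f\in\PW_{2\pi R}(\Cn)$, and set $\cF_G^c f=g\circ m$ with $m(z,\widetilde\omega)=z\widetilde\omega$. The evenness, the seminorm identity $\PWN(\cF_G^c f)=\pwn(g)$, and the homogeneity via the Taylor grading of $g$ are all as in the paper (which routes the same computation through the Radon transform $\cD_R(\Rn)\simeq\cD_{H,R}(\Xi)$, but that is cosmetic).

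The genuine difference is in the surjectivity direction. The paper takes $F\in\widetilde{\PW}_{H,2\pi R}^{\Z_2}$, restricts to $\R\times\sn$, applies $\cF_\R^{-1}$ in the radial variable, and then spends several lemmas (Cauchy estimates on $\snc$, contour shifts, dominated convergence) to verify by hand that the result lies in $\cD_{H,R}(\Xi)$ with continuous dependence; finally $\cR^{-1}$ gives $f$. You instead reconstruct $g$ on $\Cn$ directly: off the isotropic cone the two-to-one covering $\C^*\times\snc\to\Cn\setminus\{(\,\cdot\,,\,\cdot\,)=0\}$ and the evenness of $F$ make $g(w)=F(\zeta,w/\zeta)$ well defined and holomorphic, the seminorm bound gives local boundedness near the cone, and the Riemann removable-singularity theorem for codimension-one analytic sets extends $g$ to an entire function in $\PW_{2\pi R}(\Cn)$. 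Your route is shorter and more structural; the paper's route is more elementary (no several-variable extension theorem) and, as a by-product, makes the continuity estimates completely explicit. It is worth noting that the factorization you exploit, $\cD_R(\Rn)\xrightarrow{\FTrnc}\PW_{2\pi R}(\Cn)\xrightarrow{g\mapsto g\circ m}\widetilde{\PW}_{H,2\pi R}^{\Z_2}$, appears in the paper only \emph{after} the theorem, as a corollary (the commutative diagram with the map $Ext$); you have promoted that diagram to the proof itself.

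One small point: you assert that $F(0,\cdot)$ is constant on $\sn$. This is not part of the definition of $\widetilde{\PW}_{H,2\pi R}^{\Z_2}$, but you do not need it as a separate fact: once you know $\cF_G^c f=g\circ m$ agrees with $F$ on the open dense set $\{z\neq 0\}\subset\C\times\snc$, equality on $\{z=0\}$ follows by continuity of both sides, and constancy of $F(0,\cdot)$ is then a consequence rather than an input.
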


For clarity of the exposition we will prove this result in several steps. We remark that by the Theorem \ref{th-1.2}, it suffices to prove the following: for $\varphi \in \cD_{H,R}(\Xi)$ the Fourier transform $\cF_{\R}(\varphi)(r,\omega)$ extends to an even holomorphic function on $\C\times\snc$, this extension belongs to $\widetilde{\PW}_{H,2\pi R}^{\Z_2}$, and $\cF_{\R}^c$ defines a topological isomorphism $\cD_{H,R}(\Xi)\simeq \widetilde{\PW}_{H,2\pi R}^{\Z_2}$.

\begin{lem}\label{PWdiffPW}
Let $k \in \N$. Then the map $ F \mapsto \left( \frac{d}{d\xi} \right)^k  F $ is a linear continuous mapping from $\PWR $ into itself.
\end{lem}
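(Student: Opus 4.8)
The plan is to prove the lemma by identifying $\PWR$ with the classical Paley--Wiener space $\PW_{R}(\Cn)$, on which differentiation in a coordinate direction is classical, and then transporting the three defining conditions back and forth. The key observation is that the variable of a function in $\PWR$ is really the single complex vector $\xi = z\widetilde\omega \in \Cn$: ``differentiation in the spectral parameter'' is differentiation of the pulled-back function in $\xi$, and in that guise the assertion is just the standard fact that $\PW_r(\Cn)$ is stable under $\partial/\partial\xi_j$.

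First I would make the identification precise. Given $F \in \PWR$, its evenness means $F$ is constant on the fibres of the two-to-one map $\C^{*} \times \snc \to \Cn \setminus \{\xi : \xi_1^2 + \dots + \xi_n^2 = 0\}$, $(z,\widetilde\omega) \mapsto z\widetilde\omega$, so $F$ descends to a holomorphic function $G$ on the complement of the null cone; the bound $|F(z,\widetilde\omega)| \le \PWN(F)(1 + |z\widetilde\omega|^2)^{-N} e^{R|\im(z\widetilde\omega)|}$ shows $G$ is locally bounded near the null cone (whose full preimage is $\{z = 0\}$), so by Riemann's extension theorem --- the singular locus $\{0\}$ has complex codimension $n \ge 2$ --- $G$ extends to an entire function with $\sup_{\xi \in \Cn}(1 + |\xi|^2)^N e^{-R|\im\xi|} |G(\xi)| = \PWN(F)$, i.e.\ $G \in \PW_{R}(\Cn)$. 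Conversely, for $G \in \PW_R(\Cn)$ the pull-back $F(z,\widetilde\omega) := G(z\widetilde\omega)$ is even, obeys the same seminorm bounds, and, grouping the Taylor series of $G$ at the origin by homogeneous degree, $F(\lambda,\omega) = \sum_{m \ge 0} \lambda^m g_m(\omega)$ with each $g_m$ a homogeneous polynomial of degree $m$ --- precisely condition (2) of $\PWR$. Thus $F \mapsto G$ is a topological isomorphism $\PWR \to \PW_R(\Cn)$ carrying $\tfrac{d}{d\xi}$ to an honest partial derivative $\partial_{\xi_j}$ on $\Cn$ and $\PWN$ to the classical seminorm $q_N$.

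The remaining ingredient is the classical statement that $\partial_{\xi_j}$ is continuous from $\PW_r(\Cn)$ into itself. Writing $\partial_{\xi_j}G(\xi) = \frac{1}{2\pi i}\oint_{|w - \xi_j| = 1} G(\xi_1,\dots,w,\dots,\xi_n)(w - \xi_j)^{-2}\, dw$ gives $|\partial_{\xi_j}G(\xi)| \le \sup_{|w - \xi_j| = 1}|G(\xi_1,\dots,w,\dots,\xi_n)|$, and since on that circle $|\im(\xi_1,\dots,w,\dots,\xi_n)| \le |\im\xi| + 1$ and $1 + |\xi|^2 \le 5(1 + |(\xi_1,\dots,w,\dots,\xi_n)|^2)$, one obtains $q_N(\partial_{\xi_j}G) \le 5^N e^{r} q_N(G)$ and hence $q_N((\partial_{\xi_j})^k G) \le (5^N e^{r})^k q_N(G)$. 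Transporting back, $(\tfrac{d}{d\xi})^k F$ is again even, still satisfies condition (2) (the degree-$m$ part of $\partial_{\xi_j}G$ is homogeneous of degree $m$), and obeys $\PWN((\tfrac{d}{d\xi})^k F) \le (5^N e^R)^k \PWN(F)$, which is the claimed continuity. The one step that is not purely formal is the identification $F \leftrightarrow G$ and the removal of the null-cone singularity; everything after it is the one-variable Cauchy estimate applied coordinatewise. If one prefers to avoid the $\Cn$ picture (e.g.\ because that identification is only set up in the sequel), the same computation runs directly in the variables $(z,\widetilde\omega)$, the only delicate point being to take the Cauchy contour of radius comparable to $|\widetilde\omega|^{-1}$ --- legitimate since $|\widetilde\omega| \ge 1$ on $\snc$ --- so that both the exponential type $R$ and the polynomial decay are preserved uniformly in $\widetilde\omega$.
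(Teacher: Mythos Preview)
Your main argument has a genuine gap: under the identification $F(z,\widetilde\omega)=G(z\widetilde\omega)$, differentiation in the spectral variable $z$ does \emph{not} become a coordinate partial derivative on $\Cn$. By the chain rule
\[
\partial_z F(z,\widetilde\omega)=\sum_{j=1}^{n}\widetilde\omega_j\,(\partial_jG)(z\widetilde\omega),
\]
and the right-hand side is not of the form $H(z\widetilde\omega)$ for any $H$. Indeed, if $F$ is even then $\partial_zF$ is \emph{odd}, $\partial_zF(-z,-\widetilde\omega)=-\partial_zF(z,\widetilde\omega)$, so $\partial_zF$ has left the image of your isomorphism altogether. Your seminorm estimate for $\partial_{\xi_j}G$ therefore does not transfer to $\partial_zF$: the extra factor $|\widetilde\omega|$, which is unbounded on $\snc$, destroys uniformity. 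Likewise your check of the homogeneity condition (``the degree-$m$ part of $\partial_{\xi_j}G$ is homogeneous of degree $m$'') is about the wrong operator; for $\partial_zF$ the $\lambda^m/m!$ coefficient is $a_{m+1}(\omega)$, homogeneous of degree $m+1$, not $m$.

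The paper proceeds by exactly what you relegate to a parenthetical remark at the end: a direct Cauchy estimate in the first variable with contour radius $\delta<1/|\widetilde\omega|$ (legitimate since $|\widetilde\omega|\ge 1$ on $\snc$), so that passing from $\xi$ to a point on the contour changes $|\im(\,\cdot\,\widetilde\omega)|$ by at most $\delta|\widetilde\omega|<1$ and the exponential weight is controlled by a fixed factor $e^R$ uniformly in $\widetilde\omega$. Two minor corrections to your write-up: the null cone $\{\sum\xi_j^2=0\}\subset\Cn$ has complex codimension $1$, not $n$ --- Riemann extension still applies because you established local boundedness, but the codimension argument you invoke is wrong; and since $\partial_zF$ is odd, the conclusion ``$\partial_zF\in\PWR$'' cannot hold as stated --- what is actually obtained (and all that is needed for the subsequent Corollary~\ref{PWrestSW}) is the seminorm bound in $\cO_R(\C\times\snc)$.
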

% proof of the lemma
\begin{Proof}
Let $F \in \PWR$, $ \xi \in \C $ and $ \widetilde{\omega} \in \snc $ be fixed. For some $ \delta > 0 $, let $\gamma(t) = \xi + \delta e^{it}$, with $0 \leq t \leq 2\pi $. Then \[\frac{\partial}{\partial \xi} F(\xi, \widetilde{\omega}) = \frac{1}{2\pi i} \int_{\mathscr{\gamma}} \frac{F(z, \widetilde{\omega}) }{(z-\xi)^2} \; dz \, .\]
Note that this holds for any $ \delta > 0 $. We have
$$\begin{array}{rcl}
          (1 + \left| \xi \widetilde{\omega} \right|^2)^N \left|\frac{d}{d\xi} F(\xi, \widetilde{\omega}) \right| \!\!\!\!\!\!\!\!\!\!\!\!\! && e^{-R \left|\im(\xi \widetilde{\omega})\right|} \\
		& \leq & (1 + \left|\xi \widetilde{\omega} \right|^2)^N e^{-R \left|\im(\xi \widetilde{\omega})\right|} \frac{1}{2\pi} \int\limits_{\mathscr{\gamma}} \frac{ \left|F(z, \widetilde{\omega})\right| }{\left|z-\xi\right|^2} \left|dz\right|\\
 		&\leq & \frac{1}{2\pi \delta^{2}} \int\limits_{\mathscr{\gamma}} \frac{(1 + \left| \xi \widetilde{\omega} \right|^2)^N (1 + \left| z \widetilde{\omega} \right|^2)^N \left| F(z, \widetilde{\omega}) \right| e^{-R \left| \im(\xi \widetilde{\omega}) \right|}}{(1 + \left| z \widetilde{\omega} \right|^2)^N} \left| dz \right|.
        \end{array} $$
Further observe that (i) $| \re(\widetilde{\omega}) |^2 = | \im(\widetilde{\omega}) |^2 + 1,$ which implies $| \im(\widetilde{\omega}) | + 1 \geq | \re(\widetilde{\omega})|$, (ii) $\left| \im(\xi \widetilde{\omega}) \right|^2 = |\im(\xi)|^2 + |\xi|^2 |\im(\widetilde{\omega})|^2 =  |\xi|^2 |\re(\widetilde{\omega})|^2 - |\re(\xi)|^2$, and (iii) $|z| \leq |\xi| + \delta$. Applying (i) to (iii) gives: $ |\im(\xi \widetilde{\omega})| \geq \left| \im(z \widetilde{\omega}) \right| - \delta |Re(\widetilde{\omega})| - |\xi|$. Hence $ e^{-R \left| \im(\xi \widetilde{\omega}) \right|} \leq e^{-R \left| \im(z \widetilde{\omega}) \right|} e^{R \delta |Re(\widetilde{\omega})|} e^{R |\xi|} $. Since $\xi$ is fixed, the last exponential is some positive constant $\geq 1$, call it $C$, and by choosing $ \delta < \frac{1}{|\widetilde{\omega}|} $, we obtain: $ e^{-R \left| \im(\xi \widetilde{\omega}) \right|} \leq C e^{-R \left| \im(z \widetilde{\omega}) \right|} e^R.$ Thus,
$$\begin{array}{rcl}
          (1 + \left| \xi \widetilde{\omega} \right|^2)^N \left|\frac{d}{d\xi} F(\xi, \widetilde{\omega}) \right| \!\!\!\!\!\!\!\!\!\!\!\!\! && e^{-R \left|\im(\xi \widetilde{\omega})\right|} \\
		& \leq & \frac{C e^R}{2\pi \delta^{2}} \int\limits_{\mathscr{\gamma}} \frac{(1 + \left| \xi \widetilde{\omega} \right|^2)^N (1 + \left| z \widetilde{\omega} \right|^2)^N \left| F(z, \widetilde{\omega}) \right| e^{-R \left| \im(z \widetilde{\omega}) \right|}}{(1 + \left| z \widetilde{\omega} \right|^2)^N} \left| dz \right| \\
 		&\leq & \frac{ C e^R \PWN(F)}{2\pi \delta^{2}} \int\limits_{\mathscr{\gamma}} \frac{(1 + \left| \xi \widetilde{\omega} \right|^2)^N }{(1 + \left| z \widetilde{\omega} \right|^2)^N} \left| dz \right|.
\end{array} $$
Next note $$\frac{1 + \left| \xi \widetilde{\omega} \right|^2}{1 + \left| z \widetilde{\omega} \right|^2} \leq \frac{1 + |\widetilde{\omega} |^2 \left| \xi \right|^2 }{1 + \left| z \right|^2}  = \frac{1 + |\widetilde{\omega} |^2 \left| z - \delta e^{it} \right|^2 }{1 + \left| z \right|^2}  \leq \frac{|z|^2 + 2  |z| + 2}{(1 + \left| z \right|^2)} \leq 5, $$ where we used that  $ \left| z - \delta e^{it} \right|^2 \leq |z|^2 + 2 \delta |z| + \delta^2 $ and $\delta \leq 1$. Hence $ \PWN(\frac{d}{d\xi} F) \leq \frac{C e^R 5^N}{\delta} \PWN(F) $. It is easy to see that $ \frac{d}{d\xi} F $ is even and satisfies the homogeneity condition. Hence $ \frac{d}{d\xi}F \in \PWR $ and the map $F \mapsto \frac{d}{d\xi}F $ is continuous. Iterating this argument the statement follows. \qed
\end{Proof}

\begin{coro}\label{PWrestSW}
Let $ \omega \in \sn $. The restriction map $R_{\omega}: \cO_R(\C\times\snc) \rightarrow \SW(\R): F \mapsto F(\cdot, \omega)|_{\R}$ is a continuous linear transformation.
\end{coro}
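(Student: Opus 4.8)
The plan is to fix a real point $\omega\in\sn$ (regarded as a point of $\snc$) and to observe that on the real slice $\R\times\{\omega\}$ the weight defining $\cO_R(\C\times\snc)$ degenerates to the Schwartz weight. Indeed, since $\omega$ is a real vector with $|\omega|=1$, for $x\in\R$ we have $\im(x\omega)=(\im x)\,\omega=0$ and $|x\omega|=|x|$. Writing $g:=F(\cdot,\omega)|_\R$, the bound defining $\PWN(F)$ gives immediately
\[(1+x^2)^N\,|g(x)|\ \le\ \PWN(F)\qquad(x\in\R)\, ,\]
which already shows that $g$ decays rapidly and that $R_\omega$ is continuous as far as the seminorms with no derivative are concerned.

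Next I would estimate the derivatives of $g$ by a Cauchy formula in the spectral variable, which is essentially the computation in the proof of Lemma \ref{PWdiffPW}; the evenness and homogeneity conditions appearing there play no role in the growth estimate and can be ignored here. For fixed real $\omega$ the function $z\mapsto F(z,\omega)$ is entire, hence for $k\in\N$ and $x\in\R$,
\[g^{(k)}(x)=\frac{k!}{2\pi i}\oint_{|z-x|=1}\frac{F(z,\omega)}{(z-x)^{k+1}}\,dz\, ,\qquad\text{so}\qquad |g^{(k)}(x)|\le k!\max_{|z-x|=1}|F(z,\omega)|\, .\]
On the circle $|z-x|=1$ one has $|\im(z\omega)|=|\im z|\le 1$ and $1+x^2\le 3(1+|z|^2)$ (from $|x|\le|z|+1$), so
\[|F(z,\omega)|\ \le\ \PWN(F)\,(1+|z|^2)^{-N}e^{R|\im z|}\ \le\ e^{R}\,\PWN(F)\,(1+|z|^2)^{-N}\, .\]
Combining the last three displays yields $(1+x^2)^N\,|g^{(k)}(x)|\le 3^N k!\,e^{R}\,\PWN(F)$, i.e. the Schwartz seminorm $|g|_{N,k}$ is dominated by the single seminorm $\PWN(F)$. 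Hence $R_\omega F=g\in\SW(\R)$ and $R_\omega$ is continuous.

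I do not expect any genuine obstacle; the argument is short and elementary. The only point deserving a word of care is that Lemma \ref{PWdiffPW} is stated for the space $\PWR$ rather than for the larger space $\cO_R(\C\times\snc)$, so one should note that the Cauchy-estimate part of its proof applies verbatim on that larger space (or simply reproduce the two-line estimate above). Everything else reduces to the elementary inequality $1+x^2\le 3(1+|z|^2)$ valid for $|z-x|\le 1$.
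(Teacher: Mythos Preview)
Your proof is correct and follows essentially the same approach as the paper: both obtain the Schwartz estimates on $g=F(\cdot,\omega)|_\R$ from the defining bound $\PWN(F)$ via Cauchy's integral formula in the spectral variable, exploiting that for real $\omega$ and real $x$ the exponential weight disappears. The paper simply quotes the estimate derived in the proof of Lemma~\ref{PWdiffPW} (iterated $k$ times) and then restricts to $\R\times\sn$, whereas you reproduce the Cauchy estimate directly with the convenient fixed radius~$1$; your remark that the growth-estimate part of Lemma~\ref{PWdiffPW} works on the larger space $\cO_R(\C\times\snc)$ is exactly the point needed, and your explicit constants $3^N k!\,e^R$ are a minor cosmetic improvement over the paper's $(Ce^R 5^N/\delta)^k$.
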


\begin{Proof}
By the above proof, restricting $F$ to $ \R \times \sn $ yields
$$ (1 + |r|^2)^N \, \left|\left( \frac{d}{dr} \right)^k F(r, \omega)\right| \leq  \left( \frac{C e^R 5^N}{\delta}  \right)^k \PWN(F). $$
Thus, if $f(r):=F(r, \omega)$ with  $\omega \in \sn$ fixed, then $ |f|_{N,k} \leq  \left( \frac{C e^R 5^N}{\delta}  \right)^k \PWN(F)$. \qed
\end{Proof}
A simple application of the Cauchy's Integral Formula gives:
\begin{lem}\label{PWmovePI}
Let $F\in \cO_R(\C\times\snc)$ and $\omega \in \sn $. Then for any $ y \in \R $, $ \int\limits_{\R} F(t, \omega) dt = \int\limits_{\R} F(t+iy, \omega) dt $.
\end{lem}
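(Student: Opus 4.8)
The plan is to fix $\omega\in\sn$ and work with the single-variable holomorphic function $g(z):=F(z,\omega)$ on $\C$, which by Corollary \ref{PWrestSW} restricts to a Schwartz function on every horizontal line $\im z = c$, with the decay constants controlled uniformly in $c$ on bounded strips by the seminorm $\pwn(F)$. The statement is then the classical fact that the integral of a holomorphic function of Paley--Wiener type is independent of the height of the horizontal contour. First I would form, for $y\ne 0$, the rectangular contour $\Gamma_T$ with vertices $-T$, $T$, $T+iy$, $-T+iy$, oriented positively. Since $g$ is holomorphic on (a neighbourhood of) the closed strip between $\R$ and $\R+iy$, Cauchy's theorem gives $\oint_{\Gamma_T} g(z)\,dz = 0$ for every $T>0$.

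Next I would split the contour integral into its four sides: the bottom edge contributes $\int_{-T}^{T} F(t,\omega)\,dt$, the top edge contributes $-\int_{-T}^{T} F(t+iy,\omega)\,dt$, and the two vertical edges contribute $\int_0^y \bigl(F(T+is,\omega) - F(-T+is,\omega)\bigr)i\,ds$. The key estimate is that the vertical edges vanish as $T\to\infty$: for $|s|\le|y|$ and the relevant $\omega$ one has $|z\widetilde\omega| = |z|$ (here $\widetilde\omega=\omega\in\sn$), so from $\pwn(F)<\infty$ with $N\ge 1$ we get $|F(\pm T+is,\omega)| \le \pwn(F)\,(1+T^2)^{-N} e^{R|y|}$, which tends to $0$ uniformly in $s$, and the vertical segments have fixed length $|y|$. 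Letting $T\to\infty$ in $\oint_{\Gamma_T}g=0$ then yields $\int_{\R}F(t,\omega)\,dt = \int_{\R}F(t+iy,\omega)\,dt$ (both improper integrals converge absolutely by the same decay bound with $N\ge 1$).

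There is no real obstacle here — this is the standard contour-shift argument — so the only point requiring a little care is bookkeeping of the decay: one must invoke Corollary \ref{PWrestSW} (or directly the definition of $\cO_R(\C\times\snc)$ with $\omega\in\sn$ real, so that the factor $e^{-R|\im(z\widetilde\omega)|}$ is bounded below by $e^{-R|\im z|}\ge e^{-R|y|}$ on the strip) to get both the absolute convergence of the line integrals and the uniform decay of the vertical pieces. The statement for general $y\in\R$ follows, the case $y=0$ being trivial.
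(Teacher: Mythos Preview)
Your argument is correct and is exactly the standard contour-shift via a rectangle that the paper has in mind when it writes ``A simple application of the Cauchy's Integral Formula gives'' without further detail. The decay estimate you use on the vertical edges (and for absolute convergence of the horizontal integrals) is precisely the bound coming from the definition of $\cO_R(\C\times\snc)$ specialized to real $\omega$, so nothing is missing.
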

\noindent
It is also not hard to see that:
\begin{lem}\label{PWtEXP}
Let $ F \in \cO_R(\C\times\snc)$ and $ r\in \R $. Define $ H (\xi, \widetilde{\omega} ):= F(\xi, \widetilde{\omega}) e^{i \xi r} $. Then $ H \in \PWRP $.
\end{lem}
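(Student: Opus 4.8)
The plan is to check directly the two defining requirements for $H$ to belong to $\PWRP = \OS_{R+|r|}(\C\times\snc)$: that $H$ is holomorphic on $\C\times\snc$, and that $\PWN(H)<\infty$ for every $N\in\N$. Holomorphy will be immediate, since $H$ is the product of the holomorphic function $F$ on $\C\times\snc$ with the entire function $\xi\mapsto e^{i\xi r}$, hence holomorphic in $(\xi,\widetilde\omega)$.

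For the growth bound the only nontrivial ingredient is the inequality $|\im z|\le|\im(z\widetilde\omega)|$, valid for all $z\in\C$ and $\widetilde\omega\in\snc$. This is exactly observation (ii) recorded in the proof of Lemma \ref{PWdiffPW}: writing $\widetilde\omega=\re\widetilde\omega+i\,\im\widetilde\omega$ and using $|\re\widetilde\omega|^2-|\im\widetilde\omega|^2=1$ together with $\re\widetilde\omega\cdot\im\widetilde\omega=0$, one obtains $|\im(z\widetilde\omega)|^2=|\im z|^2+|z|^2|\im\widetilde\omega|^2\ge|\im z|^2$. Consequently $|e^{izr}|=e^{-r\,\im z}\le e^{|r|\,|\im z|}\le e^{|r|\,|\im(z\widetilde\omega)|}$.

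Combining these, I would conclude that for every $N\in\N$ and every $(z,\widetilde\omega)\in\C\times\snc$,
\[(1+|z\widetilde\omega|^2)^N e^{-(R+|r|)\,|\im(z\widetilde\omega)|}\,|H(z,\widetilde\omega)| \le (1+|z\widetilde\omega|^2)^N e^{-R\,|\im(z\widetilde\omega)|}\,|F(z,\widetilde\omega)|,\]
so that $\PWN(H)\le\PWN(F)<\infty$. Hence $H\in\OS_{R+|r|}(\C\times\snc)=\PWRP$. The main (and essentially only) point requiring any thought is the inequality relating $|\im z|$ and $|\im(z\widetilde\omega)|$, and even that is already available from the proof of Lemma \ref{PWdiffPW}; everything else is routine bookkeeping.
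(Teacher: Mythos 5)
Your proof is correct: the identity $|\im(z\widetilde\omega)|^2=|\im z|^2+|z|^2|\im\widetilde\omega|^2$ gives $|e^{izr}|=e^{-r\,\im z}\le e^{|r|\,|\im(z\widetilde\omega)|}$, which together with the obvious holomorphy of the product yields the required bound in $\OS_{R+|r|}(\C\times\snc)$ for every $N$. This is exactly the routine verification the paper has in mind when it states the lemma without proof, relying on the same observation (ii) already recorded in the proof of Lemma \ref{PWdiffPW}.
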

\noindent
% proof of the theorem
We now complete the proof of the Theorem \ref{firstTh}:
\begin{Proof}
% begin of the "maps to"-part of the proof
Let $ \varphi \in \cD_{H,R}(\Xi)$ and $ \omega \in \sn $ be fixed. Then for $ \xi = x + iy \in \C $ and $ r \in [-R, R] $ we have the estimate $ |e^{-2\pi ir\xi}| \leq e^{2\pi R |\im(\xi)|} $. Hence $$ \left| \int_{-R}^R \varphi(r, \omega) e^{-2\pi ir\xi} dr \right| \leq e^{2\pi R |\im(\xi)|} | \varphi( \cdotp, \omega) |_{L^1} \leq 2R \, | \varphi( \cdotp, \omega) |_{\infty} \, e^{2\pi R |\im(\xi)|} < \infty.$$ This shows that for every $ \xi \in \C, \; \cF_{\R} (\varphi)( \xi, \omega) $ is well-defined. Let $  \xi_o \in \C $, and $ \epsilon > 0$,  then for each $\xi \in \{z : | z-\xi_o | < \epsilon \} $ we have the estimate
$$ | \varphi(r, \omega) e^{-2\pi ir\xi}| \leq  | \varphi( \cdotp, \omega) |_{\infty} \, \chi_{[-R,R]}(r) \, e^{2\pi |r| |\im(\xi)|} \in L^1(\R).$$
Thus $ \xi \mapsto \cF_{\R} (\varphi)( \xi, \omega) $ converges uniformly on compact subsets of $\C$ and hence is holomorphic. Moreover
$$ \frac{d}{d\xi} \cF_{\R} (\varphi)( \xi, \omega) = \int_{\R} \varphi(r, \omega) \frac{d}{d\xi} e^{-2\pi ir\xi} dr. $$

\noindent
Since  $ \varphi \in \cD_{H,R}(\Xi) $, there is $ f \in \DS_R(\Rn) $ such that $ \RT f = \varphi $. Define a function $F$ by
\begin{align*}
     F(r, \omega) :&= \cF_{\R} (\varphi)( r, \omega) \\
       				&= \int_{-\infty}^{\infty}  \mathcal{R}f(s, \omega) \, e^{-2\pi isr} ds \\
       				&= \int_{\Rn}  f(x) \, e^{-2\pi ir\omega \cdot x} dx \\
       				&= \FTrn (f)(r\omega).
  \end{align*}
The penultimate equality holds by the Fourier-Slice theorem. Now, by the classical Paley-Wiener theorem $ \FTrn(f) $ has a holomorphic extension to $ \C^n $.
It follows that $ F $ extends to a holomorphic function on $ \C \times \snc $:
$$\begin{array}{lclcl}
          \C \times \snc 	& \longrightarrow	& \C^n		& \longrightarrow & \C \\
          (z, \widetilde{\omega})	& \longmapsto	& z \widetilde{\omega} & \longmapsto	& \FTrnc(f)(z \widetilde{\omega}) =: F(z, \widetilde{\omega}),\\
\end{array} $$
and we have the estimate
\begin{equation}\label{est2}
	\sup\limits_{(z,\widetilde{\omega})\in\C\times\snc} (1 + |z \widetilde{\omega}|^2)^N e^{-2\pi R |\im(z \widetilde{\omega})|} |F( z, \widetilde{\omega} ) | < \infty.
\end{equation}
Note that we have holomorphically extended $ \cF_{\R} (\varphi) $ in two different ways to two different domains, namely to $  \C \times \sn $ and to $  \C \times \snc $. It is easy to verify that these two extensions agree on the common domain. Since $\sn$ is a totally real submanifold of $\snc$, to show that $ F(-\xi, -\widetilde{\omega}) = F(\xi, \widetilde{\omega}) $, with $ \xi \in \C $ and  $ \widetilde{\omega} \in \snc $, it is enough to verify it for $ \omega \in \sn $, which is easy.

Let $a_k(\omega) := \left( \frac{d}{d\xi} \right)^{k}  F(\xi, \omega) |_{\xi=0} $. As we can differentiate inside the integral, $a_k(\omega) =(-2\pi i)^k \int_{\R} \varphi(r, \omega) r^k dr $. Thus for $k \in \N^+$, $ a_k $ is a homogeneous polynomial in $ \omega_1, \dots, \omega_n $ of degree $k$. Hence for $ \xi \in \C $, $ \omega \in \sn $
$$ F(\xi, \omega) = F(0, \omega) + \sum\limits_{m = 1}^{\infty} \frac{\left( \frac{d}{d\xi} \right)^{m}  F(\xi, \omega) |_{\xi=0}}{m!} \xi^m = F(0, \omega) + \sum\limits_{m = 1}^{\infty} \frac{a_m(\omega)}{m!} \xi^m. $$
This shows that $ \ftc (\varphi) := F \in \widetilde{\PW}_{H,2\pi R}^{\Z_2}$. The map is injective and since the Radon transform is a linear topological isomorphism of $\DS_R(\Rn)$ with $\cD_{H,R}(\Xi)$ \cite{hertle84}, it is also continuous.

To show surjectivity, let $ F \in \widetilde{\PW}_{H,2\pi R}^{\Z_2}$. For $ \omega \in \sn $, the map $\R \rightarrow \C: r \mapsto F(r, \omega)$ is a Schwartz function by the Corollary \ref{PWrestSW}. We will use the same letter for this restriction of $F$.  Since the Fourier transform is a topological isomorphism of the Schwartz space with itself, $ \cF_{\R}^{-1}(F)$ is a Schwartz function in the first variable, call it $\varphi$.
By the Lemmas \ref{PWtEXP} and \ref{PWmovePI}, it follows that for any $ y \in \R $ and any $\omega \in \sn$, we have: $ \varphi(r, \omega) = \int_{\R} F(x + iy, \omega) \, e^{2\pi i(x + iy)r} dx $.

Let $ \eta \in \R $, then $\varphi(r, \omega)=e^{-2\pi \eta r}  \int_{\R} F(x+i\eta, \omega) \, e^{2\pi i x r} dx$. Since for all $N \in \N $ we have $ | F(x+i\eta, \omega) | \leq c (1 + x^2 + \eta^2)^{-N} e^{2\pi R |\eta|} $ for some constant $c$, it follows that
\begin{align*}
      | \varphi(r, \omega) | &\leq e^{-2\pi \eta r}  \int_{\R} | F(x+i\eta, \omega) | dx \\
       &\leq e^{2\pi (R |\eta| - \eta r)} c \int_{\R} (1 + x^2)^{-N} dx.
  \end{align*}
Take $N$ big enough so that the last integral is finite and let $ |\eta| \rightarrow \infty $. We obtain $ \varphi(r, \omega) = 0 $ for $ |r| > R $. Hence supp$(\varphi) \subseteq [-R, R] \times \sn $, and $ r \mapsto \varphi(r, \omega) \in  \mathcal{D}_R \left( \R \right) $.

To show that for any $x \in \R $ the function $\omega\mapsto\varphi(x, \omega)$ is $ C^{\infty}(\sn)$, we have to show $ \left| D^{\alpha}_{\omega} F(r, \omega) \right| \leq |f(r)|$ for some integrable function $f$ and any multi-index $ \alpha \in \N^n $. Then by the Lebesgue Dominated Convergence theorem
$$ D^{\alpha}_{\omega} \varphi(x, \omega) = \int_{\R} D^{\alpha}_{\omega} F(r, \omega) \, e^{2\pi irx} dr $$
and we are done. It is enough to show it for $ D^{\alpha}_{\omega} = \frac{\partial}{\partial \omega_j } $ for some $j \in \{ 1, \dots, n \} $, and then argue inductively.

Fix $r \in \R$ and $\omega \in \sn $, and let $\gamma(t) = \omega + \delta e^{it} e_j$, with $0 \leq t \leq 2\pi $, then
$$ \left| \frac{\partial}{\partial \omega_j} F(r, \omega) \right| \leq (2 \pi)^{-1}  \oint\limits_{\gamma} \frac{\left|  F(r, \xi) \right|}{\left| \xi-\omega \right|^2} |d\xi| = (2 \pi)^{-1} \delta^{-2} \oint\limits_{\gamma} \left|  F(r, \xi) \right| |d\xi|.$$
Since this holds for any $ \delta > 0 $, we can choose $ \delta < \frac{1}{1 + |r|} $. Note that $ 1 + |r\xi|^2 \geq 1 + |r|^2 $, and
$ |\im(\xi)| \leq \delta c$ for some constant $c>0$. This gives $$ |F(r, \xi)| \leq \PWN(F) (1+|r|^2)^{-N} e^{2\pi Rr\delta c} \leq \PWN(F) (1+|r|^2)^{-N} e^{2\pi R c} $$ for all $N \in \N $. Hence
$$ \left| \frac{\partial}{\partial \omega_j} F(r, \omega)\right| \leq \frac{\PWN(F) e^{2\pi R c}}{\delta} (1+|r|^2)^{-N} \in L^1_r(\R) \text{  for $N$ big enough.} $$

As $x\mapsto F(x, \omega)$ is Schwartz, we can differentiate inside the integral in
$$\left( \frac{d}{dr} \right)^k \varphi(r, \omega)=\int_{\R} F(x, \omega) \left( \frac{d}{dr} \right)^k e^{2\pi i x r} dx.$$
Furthermore, since for every $k\in \N$ and $\alpha \in \N^n$,
$ \left| D^{\alpha}_{\omega} F(x, \omega) (2\pi ix)^k \right| \leq C\frac{|x|^k}{(1+|x|^2)^{-N} }$ and $\frac{|x|^k}{(1+|x|^2)^{-N}} \in L^1_r(\R)$ for $N$ big enough, we have 
$$ \left( \frac{d}{dr} \right)^k  D^{\alpha}_{\omega}  \varphi(r, \omega)= \int_{\R}  D^{\alpha}_{\omega} F(x, \omega) \left( \frac{d}{dr} \right)^k e^{2\pi i x r} dx.$$
Thus we have the estimate,
$$ \left| \left( \frac{d}{dr} \right)^k  D^{\alpha}_{\omega}  \varphi(r, \omega) \right| \leq \frac{e^{2\pi R c}}{\delta^{|\alpha|}} \PWN(F) \int_{\R} \frac{|x|^k}{(1+|x|^2)^{-N}} dx, $$ for any $N \in \N $. Choosing $N$ big enough, we obtain $ \left| \left( \frac{d}{dr} \right)^k  D^{\alpha}_{\omega}  \varphi(r, \omega) \right| \leq \widetilde{c} \PWN(F), $ for some constant $\widetilde{c}$. Thus we conclude, $ \nb_{k, D}(\varphi) < \infty$ for any $ k \in \N $ and for any $D_{\omega}$, a differential operator on the sphere. Moreover this shows that the inversion is continuous.

By assumption, for $k \in \N^+$, $ \left( \frac{d}{d\xi} \right)^{k}  F(\xi, \omega) |_{\xi=0} $ is a homogeneous polynomial of degree $k$ in $ \omega_1, \dots, \omega_n $. Since $  \int_{\R} \varphi(r, \omega) r^k dr = \frac{1}{(-2\pi i)^k}  \left( \frac{d}{dr} \right)^{k}  F(r, \omega) |_{r=0} $, $  \varphi $ satisfies the homogeneity condition. It is easy to see that $\varphi(-r, -\omega) = \varphi(r, \omega)$. Thus, $  \varphi \in \cD_{H,R}(\Xi) $. \qed
\end{Proof}

\begin{rem}
For $F\in\widetilde{\PW}_{H,2\pi R}^{\Z_2}$, let the $Ext(F)$ denote the extension of $F$ to the whole $\Cn$. It is easy to see that $Ext$ is injective and continuous. We have the following commutative diagram:
$$\begin{CD}
\DRH @>\ftc>> \widetilde{\PW}_{H,2\pi R}^{\Z_2}(\C\times\snc)\\
@A\RT AA @VV Ext V\\
\DS_R(\Rn) @>\FT_{\Rn} >> \PW_{2\pi R}(\Cn)
\end{CD}$$
Since the Fourier transforms $\ftc$ and $\FT_{\Rn}^{-1}$, as well as the Radon transform $\RT$, are linear topological isomorphisms between the function spaces indicated in the diagram, it follows that the extension map: $$ Ext: \widetilde{\PW}_{H,2\pi R}^{\Z_2}(\C\times\snc) \rightarrow \PW_{2\pi R}(\Cn)$$ is a linear topological isomorphism.

We can view this in a different way. Let us re-draw the above diagram as follows:
\begin{equation}\label{eq-ComDia1}
\begin{CD}
\DS_R(\Rn)  @> \RT >> \DRH \\
@V\FTrnc VV 								@VV\ftc V\\
\PW_{2\pi R}(\Cn) @> \widetilde{\RT}>> \widetilde{\PW}_{H,2\pi R}^{\Z_2}(\C\times\snc)
\end{CD}
\end{equation}
We obtain a Radon type transform $\widetilde{\RT}$ between the spaces $\PW_{2\pi R}(\Cn)$ and $\widetilde{\PW}_{H,2\pi R}^{\Z_2}(\C\times\snc)$. For a function $F\in\PW_{2\pi R}(\Cn)$ there is a unique function $f\in\DS_R(\Rn)$ such that $\FTrnc f=F$, and $\widetilde{\RT} F $ is defined as:
$$ \widetilde{\RT} F (z, \widetilde{\omega}) := \ftc(\RT f)  (z, \widetilde{\omega}) = \FTrnc f  (z\widetilde{\omega})=F(z\widetilde{\omega}).$$
\end{rem}

\begin{rem}
Let $F$ be a function in $\PWR\left(\C,\cO(\snc)\right)$. Consider its restriction to the sphere: $F|_{\sn}$. Clearly $F|_{\sn}$ is in $\PW^{\Z_2}_{R,H}(\C, C^{\infty}(\sn))$ and this restriction map is injective. By the two Theorems \ref{firstTh} and \ref{PW_Th}, it is also surjective.

Some results of this flavor have been obtained in \cite{DamelinDevaney2007}. There a local Paley-Wiener theorem is considered, and the authors give necessary and sufficient conditions for a function, which restricts analytically to the sphere $\sn$ with $n=2,3$, to be in a Paley-Wiener space, $\Hr(\Cn)$ for some $r>0$.
\end{rem}

%%%%%%%%%%%%%%%%%%%%%%%%%%%%%%%%%%%%%%%%%%%%%%%%%%%%%%%%%%%%%%%%%%%%%%%%%%%%%%%%%%%%%%%%%%%%%%%%%%%%%%%%%%%%%%%%%%%%%%%%%%%%%%%%%%%%%%%%%%%%%%%%%%%%%%%%%%%  Section: Semisimple Symmetric Spaces of Noncompact Type %%%%%%%%%%%%%%%%%%%%%%%%%%%%%%%%%%%%%%%%%%%%%%%%%%% %%%%%%%%%%%%%%%%%%%%%%%%%%%%%%%%%%%%%%%%%%%%%%%%%%%%%%%%%%%%%%%%%%%%%%%%%%%%%%%%%%%%%%%%%%%%%%%%%%%%%%%%%%%%%%%%%%%%%%%%%%%%%%%%%%%%
\section{Semisimple Symmetric Spaces of Noncompact Type}\label{SectionSSSofNT}
\noindent
In this section we recall the Helgason-Gangolli Paley-Wiener theorem for Rie\-man\-nian symmetric spaces of noncompact type. Recall that a Riemannian symmetric space $X=G/K$ is called to be of \textit{noncompact type} if $G$ is a connected noncompact semisimple Lie group with a finite center and without compact factors, and $K$ is a maximal compact subgroup. Then $X$ is a commutative space.
The Paley-Wiener theorem was extended to this setting by Helgason and Gangolli \cite{gangolli71,helgason66}. The proof was later simplified by Rosenberg \cite{rosenberg77}. In essence the theorem says that $\lambda\mapsto \widehat{f}_\lambda=\widehat{f} (\lambda )$ extends to a holomorphic function in the spectral parameter $\lambda$ and this extension is of exponential growth $r$ if and only if $f$ is supported in a ball of radius $r$ centered at the base point $eK$. Furthermore, the Fourier transform $\widehat{f}$ satisfies intertwining relations coming from the equivalence of the representations $\pi_\lambda$.

Let $\theta: G\to G$ be the Cartan involution corresponding to the maximal compact subgroup $K$. Denote the corresponding involution on the Lie algebra by the same letter. Then $\fg=\fk\oplus \fs$, where $\fk=\fg^\theta =\{X\in \fg\: \theta (X)=X\}$ is the Lie algebra of $K$, and $\fs=\fg^{-\theta}=\{X\in\fg\: \theta(X)=-X\}$ corresponds to the tangent space of $X$ at the base point $x_o=eK$. Fix a $K$-invariant inner product $(\, ,\, )$ on $\fg$, i.e.,  $X,Y\mapsto  -\Tr (\ad (X)\ad (\theta(Y)))$. Then $(\, ,\, )$ defines an inner product on $\theta$-invariant subspaces of $\fg$ and also a Riemannian structure on $X$.

Let $\fa\subset \fs$ be a maximal abelian subspace and $\Sigma \subset \fa^*$ the set of (restricted) roots. For $\alpha\in \Sigma$, let $\fg_\alpha:=\{X\in \fg\: (\forall H\in\fa)\,\, [H,X]=\alpha (H)X\}$ be the corresponding root space. As $\fa_r=\{H\in\fa\: (\forall \alpha\in\Sigma)\,\, \alpha (H)\not= 0\}$ is open and dense in $\fa$, there exists $H_o\in\fa$ such that
$\alpha (H_o)\not= 0$ for all $\alpha\in\Sigma$. Let $\Sigma^+=\{\alpha\in\Sigma\: \alpha (H_o)>0\}$. As $\theta (\fg_\alpha)=\fg_{-\alpha}$, it follows that $\Sigma$ is invariant under the multiplication by $-1$. In particular, $\Sigma = \Sigma^+\dot{\cup} -\Sigma^+$. Let $\fn:=\bigoplus_{\alpha\in\Sigma^+}\fg_\alpha$ and $\bar{\fn}:=\theta (\fn)=\bigoplus_{\alpha\in-\Sigma^+}\fg_\alpha$. Finally, let $\fm:=\fz_\fk (\fa)=\{X\in \fk\: [X,\fa]=\{0\}\}$ and $\fp:=\fm\oplus\fa\oplus \fn$. Then $\fp$ is a Lie algebra. Define $P=N_G(\fp)$, $M=Z_K(\fa)$, $A=\exp \fa$ and $N=\exp \fn$. Then $P=MAN$ and the multiplication map $M\times A\times N\to P$, $(m,a,n)\mapsto man$, is a diffeomorphism. Furthermore, the group $MA$ normalizes $N$. We also have the Iwasawa decomposition $G=NAK=KAN\simeq K\times A\times N$. In particular, $G=KP$ and $G/P=K/M$. We set $B=K/M$. We write $x=k(x)a(x)n(x)$, where $x\mapsto (k(x),a(x),n(x))\in K\times A\times N$ is an analytic diffeomorphism. Note that all of these maps are well defined on $B$. The action of $G$ on $B$ is then given by $k\cdot b=k(b)$.

Let $W:=N_K(\fa) /M$. Then $W$ is a finite reflection group, the (little) Weyl group. It is generated by the reflections
\[H\mapsto s_\alpha (H)=H-\alpha (H)H_\alpha\, ,\]
where $H_\alpha\in [\fg_\alpha, \fg_{-\alpha}]$ is such that $\alpha (H_\alpha )=2$.

For $\lambda\in\fa^*_\C$  and $a=\exp H\in A$ set $a^\lambda :=e^{\lambda (H)}$. Then $a\mapsto a^\lambda$ is a character on $A$. It is unitary if and only if $\lambda\in i\fa^*$. Let $m_\alpha :=\dim \fg_\alpha$,  $\alpha\in\Sigma$, and define $\rho:=\frac{1}{2}\sum_{\alpha\in\Sigma^+}m_\alpha \alpha$. Note, even if we don't use it, that $\rho $ can be viewed as an element of $(\fm\oplus \fa\oplus \fn)^*$ by
$\rho = \frac{1}{2}\Tr (\ad|_{\fn})$.

Define a representation of $G$ on $L^2(B)$ by
\[\pi_\lambda (x)f( b)= a(x^{-1}k)^{\lambda -\rho}f(x^{-1}\cdot b)\, ,\]
with $b=k\cdot x_0 \in K/M$.
The representations $(\pi_\lambda, L^2(B))$ are the \textit{principal series representations}.
$\pi_\lambda$ is unitary if and only if $\lambda\in i\fa^*$ and $\pi_\lambda$ is irreducible for almost all $\lambda\in \fa_\C^*$.
$\pi_\lambda$ is equivalent to $\pi_\mu$ if and only if there exists $w\in W$ such that $w\lambda = \mu$.  The function $p_\lambda =1$ is clearly $K$-invariant.  We normalize the intertwining operator $\cA(w,\lambda ): L^2(B)\to L^2(B)$ such that $\cA (w,\lambda )p_\lambda=p_{w\lambda}$.

We note that the Hilbert space $L^2(B)$ is the same for each of the representations $\pi_\lambda$. Thus, if $\mu$ is a measure on $i\fa^*$ and $\Lambda\subseteq i\fa^*$ is measurable, then
\[\int^\oplus_{\Lambda}(\pi_\lambda ,L^2(B))\, d\mu (\lambda )\simeq
L^2(\Lambda ,L^2(B);\mu (\lambda))\simeq L^2(\Lambda ,\mu )\overline{\otimes}
L^2(B)\]
where $\overline{\otimes}$ denotes the Hilbert space tensor product. If $\varphi$ is a section in the direct integral, then we write $\varphi_\lambda$ or $\varphi (\lambda )$ for the $\varphi$ evaluated at $\lambda$.

For $f\in C_c^\infty (X)$ the Fourier transform is now
\begin{eqnarray*}
\widehat{f}_\lambda (b) &= &\int_{X}f(x)\pi_\lambda (x)p_\lambda (b)\, dx\\
&=&\int_{X}f(x)a(x^{-1}b)^{\lambda -\rho}\, dx\\
&=&\int_{X}f(x) e_{-\lambda,b}(x)\, dx
\end{eqnarray*}
where $\displaystyle{e_{\lambda,b}(x):=a(x^{-1}b)^{-\lambda -\rho}}$.
Thus, the vector valued Fourier transform $\widehat{f}_\lambda$ evaluated at $b\in B$ is exactly the Helgason Fourier transform on $X$.  Note however, that our notation differs from that of Helgason by an $i$ in the exponent. It differs from \cite{OlafssonSchlichtkrullRT08} by a minus sign. This is done so that it fits better to the compact case which we will discuss in a moment.

We have
\begin{eqnarray}
\cA (w,\lambda )\widehat{f}_\lambda &=&\int_{X} f(x)\cA (w,\lambda )[\pi_\lambda
(x)p_\lambda]\, dx\nonumber\\
&=&\int_{X} f(x)\pi_{w\lambda}(x)p_{w\lambda}\, dx\nonumber\\
&=&\widehat{f}_{w\lambda}\, .\label{IntertwiningRelation}
\end{eqnarray}

If $f$ is $K$-invariant, then $\widehat{f}_\lambda$ is independent of $b$ and we simply write $\widehat{f} (\lambda )$ for $\widehat{f}_\lambda (b)$. We have
\[\widehat{f}(\lambda )=\int_X f(x)\left(\int_K a(x^{-1}k)^{\lambda -\rho}\, dk\right)\, dx
= \int_X f(x)\varphi_{-\lambda } (x)\, dx\]
where $\varphi_\lambda $ denotes the spherical function
\begin{equation}\label{eq-sphericalFunction}
x\mapsto (\pi_{-\lambda } (x)p_{-\lambda },p_{-\lambda} )=
\int_K a(x^{-1}k)^{-\lambda - \rho}\, dk\, .
\end{equation}
We have $\varphi_\lambda =\varphi_\mu$ if and only if $\lambda \in W\cdot \mu$ and the intertwining relation (\ref{IntertwiningRelation}) reduces to $\widehat{f}(\lambda )=\widehat{f}(w\cdot \lambda )$.

Let $c(\lambda )$ be the Harish-Chandra $c$-function. We won't need the exact form here, but recall that it can be expressed as a multiple of $\Gamma$-functions \cite{GindikinKarpelevic62}. Define a measure on $i\fa^*$ by $d\mu_X (i\lambda )=(\# W|c (\lambda )|^2)^{-1}d(i\lambda)$. Let
\[L^2_W(i\fa^*,L^2(B);\mu_X):=\{F \in L^2(i\fa^*,L^2(B);\mu_X)\: \cA (w,\lambda )F_\lambda = F_{w\lambda}\}\, .\]
Then $L^2_W(\fa^*,L^2(B);\mu_X)$ is a closed subspace of $L^2(i\fa^*,L^2(B);\mu_X)$ and hence a Hilbert space.

\begin{thm} The Fourier transform extends to a unitary isomorphism
\[L^2(X)\simeq L^2_W(i\fa^*,L^2(B);\mu_X)\, .\]
\end{thm}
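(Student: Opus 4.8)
The plan is to obtain the theorem from the abstract Plancherel decomposition of Theorem~\ref{th-PlancherelGK}, feeding in Harish-Chandra's determination of the spherical Plancherel measure to make everything explicit. By the general theory of Section~\ref{SectionGelfandPairs}, the vector valued Fourier transform $f\mapsto\widehat{f}$ already extends to a unitary isomorphism of $L^2(X)$ onto $\int^\oplus_\Lambda(\pi_\lambda,L^2(B))\,d\mu(\lambda)$ for some measurable $\Lambda\subseteq\widehat{G}$ and some measure $\mu$, with inversion as in Theorem~\ref{th-PlancherelGK}. What remains is (i) to recognize $\Lambda$ as a set of representatives for $W\backslash i\fa^*$, (ii) to identify $\mu$, and (iii) to rewrite the resulting direct integral over $W\backslash i\fa^*$ as $L^2_W(i\fa^*,L^2(B);\mu_X)$. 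Step (i) is classical: the principal series $\pi_\lambda$, $\lambda\in i\fa^*$, are irreducible for almost all $\lambda$, carry a one-dimensional space of $K$-fixed vectors, satisfy $\pi_\lambda\simeq\pi_\mu$ iff $\mu\in W\lambda$, and exhaust the support of the part of the Plancherel measure of $G$ that enters $L^2(G/K)$.

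Step (ii) is the heart of the matter, and I would handle it by restricting first to $K$-invariant functions. For $f\in C_c^\infty(X)^K$ the transform reduces, as computed just before the theorem, to the Harish-Chandra spherical transform $\widehat{f}(\lambda)=\int_X f(x)\varphi_{-\lambda}(x)\,dx$, and the spherical Plancherel and inversion theorems (see \cite{gangolli71,helgason66}, simplified in \cite{rosenberg77}, with the $c$-function as in \cite{GindikinKarpelevic62}) say precisely that this is, up to the constant $(\# W)^{-1}$, an isometry of $L^2(X)^K$ onto $L^2(W\backslash i\fa^*,|c(\lambda)|^{-2}\,d(i\lambda))$, using that $|c(\lambda)|^{-2}$ and the spherical transform are $W$-invariant. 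Since the spherical transforms of $C_c^\infty(X)^K$ are dense in the scalar subspace $L^2(W\backslash i\fa^*,\mu)$ of the abstract decomposition, comparing the two norm identities forces $\mu$ to be $(\# W|c(\lambda)|^2)^{-1}d(i\lambda)$. Propagating this from the $K$-spherical subspace to all of $L^2(X)$ is then formal, since the Plancherel measure is intrinsic to the decomposition once the normalizations $\|p_\lambda\|=1$ and $\cA(w,\lambda)p_\lambda=p_{w\lambda}$ are fixed; alternatively one expands $\widehat{f}_\lambda(b)$ in spherical harmonics on $B=K/M$ and applies the spherical case for each $K$-type.

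For step (iii), the intertwining relation~(\ref{IntertwiningRelation}), $\cA(w,\lambda)\widehat{f}_\lambda=\widehat{f}_{w\lambda}$, shows that $\widehat{f}$ always satisfies the $W$-equivariance defining $L^2_W$; conversely, restriction to a fundamental domain for $W$, together with the normalization $\cA(w,\lambda)p_\lambda=p_{w\lambda}$ and the unitarity of $\cA(w,\lambda)$ for $\lambda\in i\fa^*$ (so that $\|F_\lambda\|$ is $W$-invariant), identifies $L^2_W(i\fa^*,L^2(B);\mu_X)$ isometrically with $\int^\oplus_{W\backslash i\fa^*}(\pi_\lambda,L^2(B))\,d\mu_X$ — the factor $\# W$ in $\mu_X$ being exactly what makes this an isometry rather than a rescaling. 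Combining (i)--(iii) with Theorem~\ref{th-PlancherelGK} yields the asserted unitary isomorphism, with inverse the stated integral; injectivity and surjectivity are then automatic.

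The main obstacle is squarely step (ii): the explicit Plancherel density $|c(\lambda)|^{-2}$ and the factor $\# W$ are Harish-Chandra's deep theorem, which I would cite rather than reprove; everything else is routine bookkeeping. A secondary point needing care is the compatibility of normalizations — the authors' $\pi_\lambda$ differs from Helgason's and from \cite{OlafssonSchlichtkrullRT08} by an $i$ in the exponent and a sign — but this affects only the identification of $c(\lambda)$ and $\mu_X$, not the shape of the argument.
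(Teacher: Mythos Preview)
Your proposal is a reasonable and essentially correct outline, but there is nothing to compare it against: the paper's own proof of this theorem is the single line ``See \cite{helgason2008}, p.~202.''  The authors treat this result as background and defer entirely to Helgason's textbook.

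What you have written is in fact a sketch of how the argument in \cite{helgason2008} (and the original sources \cite{gangolli71,helgason66,rosenberg77}) goes: reduce to the spherical case via the $K$-type decomposition, invoke Harish-Chandra's spherical Plancherel theorem to pin down the density $|c(\lambda)|^{-2}$, and then use the intertwining relation~(\ref{IntertwiningRelation}) together with the unitarity of $\cA(w,\lambda)$ on $i\fa^*$ to pass from a fundamental domain for $W$ to the $W$-equivariant space $L^2_W$.  So your route is the standard one, just spelled out where the paper merely points to the literature.  One small caution: your step~(ii), ``propagating from the $K$-spherical subspace to all of $L^2(X)$ is then formal,'' is correct but is exactly the place where Helgason's treatment (via the Eisenstein integral / $\delta$-spherical transform for each $K$-type) does real work; calling it formal undersells the content, though the outcome is as you say.
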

\begin{proof} See \cite{helgason2008}, p. 202.
\end{proof}

For $r>0$, let $\overline{B}_r(x_o)$ denote the closed ball of radius $r>0$ and center $x_o=eK$. Let $\PW_{r,W}(\fa_\C^*,C^\infty (B))$ denote the space of holomorphic functions $F:\fa_\C^*\to C^\infty (B)$ such that
\begin{enumerate}
\item For each $N\in\N$, and $D$ a differential operator on $B$ we have
$$\displaystyle{\sigma_{N}(F):=\sup_{\lambda\in\fa_\C^*} (1+|\lambda |^2)^Ne^{-r|\im \lambda |} \|DF(\lambda )\|_\infty<\infty} ,$$
\item $\displaystyle{ \cA (w,\lambda )F(\lambda )=F(w\lambda)}$.
\end{enumerate}
The topology defined by the seminorms $\sigma_N$ turns $\PW_{r,W}(\fa_\C^*,C^\infty (B))$ into a Fr\'echet space.

\begin{thm} If $f\in C_r^\infty (X)$, then $\widehat{f}$ extends to a holomorphic function $\widehat{f}^c$ on $\fa_\C^*$, $\widehat{f}^c\in \PW_{r,W}(\fa_\C^*,C^\infty (B))$ and $f\mapsto \widehat{f}^c$  is a topological isomorphism $C^\infty_r(X)\simeq \PW_{r,W}(\fa_\C^*,C^\infty (B))$.
\end{thm}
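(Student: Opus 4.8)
The plan is to reduce the vector-valued Paley--Wiener theorem to the scalar Helgason--Gangolli theorem combined with the $K$-type decomposition of $L^2(B)$, exactly as in the Euclidean case (Theorems~\ref{PW_Th} and~\ref{firstTh}) where the sphere played the role of $B$. The key point is that the Hilbert space $L^2(B)$ is the \emph{same} for all $\pi_\lambda$, so a section $\lambda\mapsto\widehat f_\lambda$ is genuinely an $L^2(B)$-valued (indeed $C^\infty(B)$-valued) function of $\lambda$, and the intertwining relation (\ref{IntertwiningRelation}) is the exact analog of the evenness condition.

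First I would establish continuity of $f\mapsto\widehat f^c$ in one direction. For $f\in C_r^\infty(X)$ and any distribution $\Lambda$ on $B$, the scalar function $\lambda\mapsto\Lambda(\widehat f_\lambda)=\int_X f(x)\Lambda_b\bigl(a(x^{-1}b)^{\lambda-\rho}\bigr)\,dx$ is, by the classical (scalar) Helgason--Gangolli Paley--Wiener theorem applied $K$-type by $K$-type, entire in $\lambda$ of exponential type $r$; hence $\widehat f$ is weakly holomorphic, and by nuclearity of $C^\infty(B)$ (as in the passage $\SW(\R\times\sn)=\SW(\R,\SW(\sn))$ used earlier) it is $C^\infty(B)$-valued holomorphic. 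To bound the seminorms $\sigma_N$, one differentiates under the integral: for a differential operator $D$ on $B$, $D_b\bigl(a(x^{-1}b)^{\lambda-\rho}\bigr)$ is a polynomial in $\lambda$ times elementary functions of $x$, uniformly controlled for $x$ in the support ball, so repeated integration by parts in the $X$-integral (using an $X$-invariant differential operator, e.g.\ powers of $1+\Delta_X$, whose $\pi_\lambda$-symbol grows polynomially in $\lambda$) yields $\sigma_N(\widehat f^c)\le C\,\nu(f)$ for a suitable Schwartz seminorm $\nu$ of $f$, and the $\|\cdot\|_\infty$ over $B$ is handled because $B$ is compact. The intertwining condition (2) is (\ref{IntertwiningRelation}) verbatim.

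For surjectivity and continuity of the inverse, let $F\in\PW_{r,W}(\fa_\C^*,C^\infty(B))$. Expand $F(\lambda)=\sum_{\delta\in\widehat K}F_\delta(\lambda)$ according to the decomposition $L^2(B)=\widehat\bigoplus_\delta L^2(B)_\delta$ into $K$-isotypic components; each matrix coefficient of $F_\delta(\lambda)$ is a scalar holomorphic function of $\lambda$ satisfying the scalar exponential-type estimate, so the scalar Helgason--Gangolli theorem (in its $K$-finite, intertwining-equivariant form --- cf.\ the $K$-finite discussion and \cite{helgason2008}) produces $f_\delta\in C_r^\infty(X)_\delta$ with $\widehat{f_\delta}=F_\delta$; summing, $f:=\sum_\delta f_\delta$ is the candidate, and the Weyl-group equivariance of $F$ ensures the scalar pieces glue consistently under the $\cA(w,\lambda)$ so that $f$ is well defined and genuinely supported in $\overline B_r(x_o)$. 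The inverse Fourier formula $f(x)=\int_{i\fa^*}(\widehat f_\lambda,\pi_\lambda(x)p_\lambda)\,d\mu_X(\lambda)$, together with the estimates on $F$ and the fact that $|c(\lambda)|^{-2}$ has at most polynomial growth on $i\fa^*$, lets one bound any Schwartz seminorm of $f$ by finitely many $\sigma_N(F)$ via integration by parts in $\lambda$ (shifting the contour using holomorphy of $F$ to exploit the $e^{-r|\im\lambda|}$ decay, exactly as in the proof of Theorem~\ref{firstTh}), proving continuity of $F\mapsto f$.

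The main obstacle I anticipate is controlling the support of $f$ precisely --- i.e.\ showing that the exponential type $r$ of $\widehat f^c$ forces $\mathrm{supp}(f)\subseteq\overline B_r(x_o)$ rather than merely some larger ball. In the Euclidean case this was clean because one could literally shift the $x$-contour in $\cF_\R^{-1}$ and let $|\eta|\to\infty$; on a symmetric space the analog is the classical but delicate estimate, due to Helgason and Gangolli (and simplified by Rosenberg, \cite{rosenberg77}), relating the exponential growth of the Fourier transform to the support via the behaviour of the spherical functions and the ``shift'' argument on $\fa_\C^*$, and one must check it passes from the scalar ($K$-invariant) case to each $K$-type with uniform constants so that the assembled $f$ has the asserted support. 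A secondary technical nuisance is verifying that the $K$-type sum $\sum_\delta f_\delta$ converges in $C^\infty$ with support control uniform in $\delta$; this follows from the seminorm estimates already obtained, since the $\sigma_N$ bounds on $F$ force rapid decay of $\|F_\delta\|$ in $\delta$.
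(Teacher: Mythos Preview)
The paper does not give a proof here at all: it simply cites \cite{gangolli71,helgason66,helgason2008,rosenberg77} and \cite{danielsen2010}. It does, however, indicate right after the statement what the key mechanism in those references is: the horocycle Radon transform $\cR$ of (\ref{def-RadTrGK}), the support property (\ref{RadonSupportThm}), and the Fourier--Slice factorization (\ref{eq-FSl2}), which reduces both holomorphy and the exponential-type estimate to the \emph{Euclidean} Paley--Wiener theorem on the abelian group $A\simeq\fa$. Your sketch goes a genuinely different route: you bypass the Radon transform entirely and instead decompose $L^2(B)$ into $K$-types, invoking the $K$-finite scalar Helgason theorem type by type and then summing. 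That is essentially Torasso's strategy \cite{torasso77}, which the paper only mentions later, in Section~\ref{SectionSSSofCT}, as the method of choice for the \emph{compact} dual. The Radon-transform route is cleaner for the forward direction because it makes the exponential type automatic (you never wrote down the needed geometric inequality $|H(x^{-1}k)|\le d(x\cdot x_o,x_o)$, which is what actually produces the factor $e^{r|\mathrm{Im}\,\lambda|}$); your $K$-type route is more representation-theoretic and transfers better to settings where no good Radon transform is available.

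One caution about your surjectivity paragraph: when you invoke ``the scalar Helgason--Gangolli theorem in its $K$-finite, intertwining-equivariant form'' you are essentially assuming the theorem you are proving, since the full statement is obtained from the $K$-finite one by exactly the summation-over-$\delta$ argument you then perform. The honest content of that step is the reduction of each fixed $K$-type to the $K$-\emph{invariant} case via Kostant's description of the spherical principal series (again, see the discussion around \cite{torasso77,kostant75} in Section~\ref{SectionSSSofCT}); you should name that reduction rather than cite the $K$-finite theorem as a black box. With that adjustment your outline is sound, and the obstacles you flag (uniform support control across $K$-types, convergence of $\sum_\delta f_\delta$) are indeed the right ones.
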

\begin{proof} See \cite{gangolli71,helgason66,helgason2008,rosenberg77}. For the formulation as above, see \cite{danielsen2010}.
\end{proof}

An important step in the proof is the generalization of the Fourier-Slice Theorem (\ref{eq-FST}). For that let us recall the Radon transform for $X$. The \textit{horocycles} in $X$ are the orbits of the group $N$. Using that $G=NAK$, it follows easily that each horocycle is of the form $\xi (kM,a)=kaN\cdot x_o$, and that $\Xi$, the space of horocycles, is a $G$-space and isomorphic to $G/MN\simeq K/M\times A$.
The Radon transform of a function $f\in C_c^\infty (X)$ is given by
\begin{equation}\label{def-RadTrGK}
\cR (f)(kM,a):=\int_{N}f(kan\cdot x_o)\, dk\, .
\end{equation}

For $r>0$, denote by $C^\infty_r(\Xi)$ the space of smooth functions $\varphi$ on $\Xi$ such that $\varphi (b,a)=0$ for $|\log a|\ge 0$. Then
\begin{equation}\label{RadonSupportThm}
\cR (C_r^\infty (X))\subseteq C^\infty_r(\Xi )\, .
\end{equation}
Furthermore, there exits a constant $c>0$ such that
\begin{equation}\label{eq-FSl2}
\widehat{f}(\lambda ,b)=c\cF_A (\cR (f))(\lambda ,b)
\end{equation}
where $\cF_A$ stands for the Fourier transform on the vector group $A\simeq \fa$.
The Eu\-cli\-de\-an Paley-Wiener The\-orem now implies that $\widehat{C^\infty_r(X)}\subseteq \PW_{r,W}(\fa_\C^*,C^\infty (B))$.

%%%%%%%%%%%%%%%%%%%%%%%%%%%%%%%%%%%%%%%%%%%%%%%%%%%%%%%%%%%%%%%%%%%%%%%%%%%%%%%%%%%%%%%%%%%%%%%%%%%%%%%%%%%%%%%%%%%%%%%%%%%%%%%%%%%%%%%%%%%%%%%%%%%%%%%%%%%  Section: Semisimple Symmetric Spaces of Compact Type %%%%%%%%%%%%%%%%%%%%%%%%%%%%%%%%%%%%%%%%%%%%%%%%%%%%%% %%%%%%%%%%%%%%%%%%%%%%%%%%%%%%%%%%%%%%%%%%%%%%%%%%%%%%%%%%%%%%%%%%%%%%%%%%%%%%%%%%%%%%%%%%%%%%%%%%%%%%%%%%%%%%%%%%%%%%%%%%%%%%%%%%%%

\section{Semisimple Symmetric Spaces of the Compact Type}\label{SectionSSSofCT}
\noindent
Now we discuss the Paley-Wiener theorem for symmetric spaces of compact type.
The case of central functions on compact Lie groups $U\simeq U\times U/\text{diag}(U)$ was considered by Gonzalez in \cite{gonzalez2001}. The general case of $K$-invariant functions on $U/K$ was treated in \cite{BransonOlafssonPasqualeH,BransonOlafssonPasquale2005,OlafssonSchlichtkrullPW08,OlafssonSchlichtkrullD}. The first two article considered only the case of even multiplicities $m_\alpha$. The $K$-finite case was solved in \cite{OlafssonSchlichtkrull2010}. The case of $K$-invariant functions on the sphere was discussed in \cite{abouelaz2001} and the Grassmanian was done in \cite{camporesi2006}. But so far the case of the full space $C_r^\infty (U/K)$ is still open.  Let us describe the main results in \cite{OlafssonSchlichtkrull2010}. For simplicity we will always assume that $U/K$ is simply connected and note that the results in \cite{OlafssonSchlichtkrull2010} are more general than stated here.

Compact and noncompact symmetric spaces come (up to a covering) in pairs. To use the notation that we already introduced, let $\fq=i\fs$, $\fb=i\fa$ and $\fu=\fk\oplus \fq$. Let $\fg_\C=\fg\otimes_\R \C$ be the complexification of $\fg$ and let $G_\C$ denote a simply connected Lie group with Lie algebra $\fg_\C$. Let $U$ be the subgroup of $G_\C$ with Lie algebra $\fu$. Then $U$ is compact and simply connected. We will assume that $G\subset G_\C$. The involution $\theta$ extends to an involution on $G_\C$. By restriction it defines an involution on $U$ as well, which we also denote by $\theta$. $U^\theta$ is connected and $U^\theta=U\cap G=K$. Let $Y:= U/K$ and $X_\C= G_\C/K_\C$, where from now on the subscript ${}_\C$ denotes the complexification in $G_\C$ of real subgroups in $G$ or $U$.

Let
\begin{equation}\label{Lambda}
\Lambda^+ (Y):=\left\{ \mu \in \fa^*\: (\forall \alpha\in\Sigma^+)\,\, \frac{(\mu ,\alpha )}{(\alpha ,\alpha)}\in \N \right\}
\end{equation}
and denote by $\widehat{U}_K$ the set of equivalence classes of irreducible representations with a non-trivial $K$-fixed vector. If $\pi$ is an irreducible representation of $U$, then $[\pi ]$ denotes the equivalence class of $\pi$.

\begin{thm} If $\mu \in\Lambda^+(Y)$, then there exists a unique irreducible representation $\pi_\mu$ with highest weight $\mu$. $[\pi_\mu]\in \widehat{U}_K$ and the map $\Lambda^+(Y)\ni \mu \mapsto [\pi_\mu]\in  \widehat{U}_K$ is a bijection. Furthermore, if $[\pi ]\in  \widehat{U}_K$, then $\dim V_\pi^K=1$, where $V_\pi$ denotes the Hilbert space on which $\pi$ acts.
\end{thm}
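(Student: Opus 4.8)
The plan is to recognize this as the Cartan--Helgason theorem, so the proof will assemble standard highest weight theory together with the correspondence between $U/K$ and its noncompact dual $X=G/K$. First I would recall that since $U$ is compact and simply connected, the Peter--Weyl theorem gives $L^2(U/K)=\bigoplus_{[\pi]\in\widehat U} V_\pi\otimes (V_\pi^*)^K$, so that $[\pi]$ contributes to $L^2(U/K)$ precisely when $V_\pi^K\neq\{0\}$; this is what makes $\widehat U_K$ the natural index set. The core claim splits into three parts: (a) a dominant weight $\mu$ of $U$ arises as the highest weight of some $[\pi]\in\widehat U_K$ if and only if $\mu$ vanishes on $\fa\cap\fk$-type directions and satisfies the integrality condition $\tfrac{(\mu,\alpha)}{(\alpha,\alpha)}\in\N$ for all $\alpha\in\Sigma^+$, i.e. $\mu\in\Lambda^+(Y)$; (b) the map $\mu\mapsto[\pi_\mu]$ is injective, which is immediate since an irreducible representation is determined by its highest weight; and (c) for such $\pi$ one has $\dim V_\pi^K=1$.

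For part (a), the plan is to use the restricted root data already set up in Section~\ref{SectionSSSofNT}: choose a maximal torus of $U$ containing $\exp(i\fa)$ and compatible with $\fm$, so that weights of $U$ restrict to $\fa^*$ and the condition ``$\mu$ is $\theta$-spherical'' translates into $\mu$ being trivial on the torus part coming from $\fm$ and real-valued on $i\fa$. The classical argument (due to Helgason, building on Cartan) shows that a $K$-fixed vector exists in $V_\pi$ exactly when the highest weight $\mu$ satisfies these two conditions, and that the integrality condition relative to the restricted roots $\alpha\in\Sigma^+$ is equivalent to $\mu$ extending to a well-defined character on the relevant subtorus; this is where one invokes that $U/K$ is simply connected, which removes any lattice ambiguity and makes the condition exactly $\mu\in\Lambda^+(Y)$. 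Conversely, given $\mu\in\Lambda^+(Y)$, one constructs $\pi_\mu$ as the irreducible representation of $U$ with that highest weight (this requires $\mu$ to be $U$-dominant integral, which follows from the $\Sigma^+$-integrality together with the structure of the weight lattice of the simply connected $U$), and checks it has a $K$-fixed vector.

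For part (c), the plan is the standard multiplicity-one argument: if $v_1,v_2\in V_\pi^K$ are two $K$-fixed vectors, consider the associated spherical functions $x\mapsto(\pi(x)v_i,v_i)$ on $U/K$; since $(U,K)$ is a Gelfand pair — which holds for a Riemannian symmetric pair — the convolution algebra $L^1(U/K)^K$ is commutative, forcing $\dim V_\pi^K\le 1$, and part (a) guarantees it is nonzero. Alternatively one deduces this directly from the multiplicity-one statement \eqref{eq-DirectInt} recalled in Section~\ref{SectionGelfandPairs}, applied to the Gelfand pair $(U,K)$. The surjectivity in the bijection then follows because every $[\pi]\in\widehat U_K$ has a highest weight $\mu$ which, by the ``only if'' direction of (a), must lie in $\Lambda^+(Y)$.

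The main obstacle I expect is part (a), specifically pinning down that the integrality/sphericity conditions are \emph{exactly} $\mu\in\Lambda^+(Y)$ as defined in \eqref{Lambda} rather than a coarser or finer lattice condition; this is precisely the point where the simple connectivity of $U/K$ is essential and where one must carefully relate the weight lattice of $U$, the restricted roots $\Sigma$, and the half-sum $\rho$. The cleanest route is to cite the Cartan--Helgason theorem in the form given in Helgason's book and to note, as the statement of the theorem already does, that the reference \cite{OlafssonSchlichtkrull2010} contains a more general version from which this follows; the remaining steps (b) and (c) are then routine consequences of highest weight theory and the Gelfand pair property.
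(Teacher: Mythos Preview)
Your proposal is correct and correctly identifies the statement as the Cartan--Helgason theorem; the paper's own proof consists solely of the reference ``See \cite{helgason2008}, p.~538,'' so your approach of citing Helgason is exactly what the paper does, only you supply considerably more of the underlying argument than the paper itself.
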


\begin{proof} See \cite{helgason2008}, p. 538.
\end{proof}

For $\mu\in \Lambda^+ (Y)$, let $(\pi_\mu,V_\mu)$ be an irreducible representation with the highest weight $\mu$. Let $d (\mu)=\dim_\C V_\mu$. Let $e_\mu$ be a $K$-fixed vector of norm one and let $v_\mu$ be a highest weight vector such that $(e_\mu,v_\mu)=1$. Recall that $\pi_\mu$ extends to a holomorphic representation of $G_\C$. We have with $M_\C = Z_{K_\C}(A_\C)$
\begin{equation}\label{eq-piMvmu}
\pi_\mu (m)v_\mu=v_\mu\quad \text{ for all } m\in M_\C\, ,
\end{equation}
see \cite{helgason2008}, p. 535 and \cite{OlafssonSchlichtkrull2010}, Lemma 3.1. Note, in \cite{OlafssonSchlichtkrull2010} this was proved for $M$ only, but the claim follows from that, because $M_\C=(M_\C)_oM$ and $\pi_\mu|_{M_\C}$ is holomorphic.

Let $f\in C_c^\infty (Y)$, $k\in K$, and $\mu\in\Lambda^+(Y)$. Using that
\begin{equation}\label{eq-piMuIntK}
\pi_\mu (u)e_\mu = \int_B e^{-(\mu +2\rho)(H(u^{-1}k))}\pi_\mu (k)v_\mu \, dk=
\int_B e_{\mu+\rho, kM}(u)\pi_\mu (k)v_\mu \, dk
\end{equation}
(see the proof of Lemma 3.2 in \cite{OlafssonSchlichtkrull2010}) we have
\begin{eqnarray*}
f(x)&=&\sum_{\mu\in\Lambda^+(Y)} d(\mu )(\widehat{f}_\mu ,\pi_\mu (x)e_\mu)\\
&=& \sum_\mu d (\mu )\int_U f(u)(\pi_\mu (u)e_\mu , \pi_\mu (x)e_\mu)\\
&=&\sum_\mu d(\mu )\int_Y \int_U f(u) e_{\mu +\rho,b}(u)(\pi_\mu (x^{-1}k)v_\mu ,e_\mu )\, dk\\
&=& \sum_\mu d(\mu )\int_Y \left(\int_U f(u)e_{\mu + \rho, b} (u)\, du\right)\,
e_{-\mu -\rho, b}(x)\, db\, .
\end{eqnarray*}
Therefore, following T. Sherman \cite{sherman75,sherman77,sherman90}, we define
\begin{equation}\label{def-SherFT}
\widetilde{f}(\mu ,b):=\int_Y f(u)e_{\mu + \rho, b}(u)\, du\, .
\end{equation}
Thus $\widetilde{f} : \Lambda^+(Y)\to C^\infty (B)$.
Note that this definition differs from the one in \cite{OlafssonSchlichtkrull2010} by a $\rho$-shift and a minus-sign.

To clarify this and to determine for which spaces of functions this is well defined, we recall that $K_\C A_\C N_\C$ and $N_\C A_\C K_\C$ are open complex submanifolds in $G_\C$. But the $A_\C$ component is not uniquely determined anymore because $\{e\}\subsetneqq A_\C\cap K_\C\not\subset M_\C$. However, by (\ref{eq-piMvmu}) and $2\rho \in \Lambda^+(Y)$, it follows that $e_{\mu +\rho,b}(x)$ is well defined for $x^{-1}k\in K_\C A_\C N_\C$ and $\mu\in \Lambda^+(Y)$. For a Paley-Wiener type theorem we need a set where the holomorphic extension in $\lambda$ is well defined on all of $\fa_\C^*$. For that one shows that there exists a $K$-invariant domain $\cU_1\subset X_\C$ containing $X$ such that $(xK,kM)\mapsto a(x^{-1}k)\in A_\C$ is well defined and that there exists a $K$-invariant subset $\cU \subset X_\C$, containing $X$  such that $(xK,b)\mapsto e_{\lambda ,b}(x)$ is well defined and holomorphic as a function of $xK$ and $\lambda$. We refer to the discussion and references in \cite{OlafssonSchlichtkrull2010}. The holomorphic continuation of $\widehat{f}$, denoted by $\widehat{f}^c$, is well defined if $f\in C_c^\infty( \cU\cap Y)$. Furthermore, we have an intertwining relation
\[ \cA (w, -\mu-\rho)\widetilde{f}(\mu)=\widetilde{f}(w(\mu+\rho)-\rho)\]
or equivalently
\[\cA (w, -\mu )\widetilde {f}(\mu -\rho )= \widetilde f (w\mu -\rho )\, .\]

Let $R>0$ be smaller than the injectivity radius for $Y$ and so that every closed ball in $Y$ of radius $0<r\le R$ is contained in $\Xi \cap Y$. Denote by $C^\infty_{F,r} (Y)$ the space of $K$-finite functions on $Y$ with support in a closed ball of radius $r$, and similarly $C^\infty_F(B)$ the space of $K$-finite functions on $B$. For $r<R$ let $\PW_r(\fb_\C^*,C^\infty_F (B))$ denote the space of holomorphic functions $\varphi $ on $\fb_\C^*$ such that
\begin{enumerate}
\item  $\varphi(\mu ,\, \cdot\, )\in C^\infty_F (B)$ the $K$-types are independent of $\mu$.
\item For all $N\in \N$, $\sup_{\lambda\in\fb^*_\C} (1+|\lambda |^2)^N e^{-r|\Re (\lambda )|} \|\varphi (\lambda )\| < \infty$.
\item We have for all $w\in W$ and $\lambda\in\fb_\C^*$ that $\cA (w,-\lambda )\varphi (\lambda - \rho)=\varphi (w\lambda -\rho)$.
\end{enumerate}

Note that by (1) there exists a finite dimensional $K$-invariant subspace $\cH_\varphi\subset C^\infty (B)$ such that $\varphi : \fb_\C^*\to \cH_\varphi$. Therefore in (2) one can use other topologies on $C^\infty_F(B)$, like the supremum of derivatives or the weak topology of $\cH_\varphi$ as a subspace of $L^2(B)$.

\begin{thm}[\cite{OlafssonSchlichtkrull2010}] Suppose that $0<r<R$.
 \begin{enumerate}
\item Let $f\in C^\infty_{F,r}(Y)$. Then $\mu \mapsto \widetilde{f}(\mu)$ extends to a holomorphic function $\widetilde{f}^c$ on $\fb^*_\C$ and $\widetilde{f}^c\in  \PW_r(\fb_\C^*,C^\infty_F (B))$.
\item If $\varphi \in  \PW_r(\fb_\C^*,C^\infty_F (B))$, then there exists $f\in
C^\infty_{F}(Y)\cap C^\infty_{F,r}(Y)$ such that $\widetilde{f}(\mu)=\varphi (\mu)$ for all $\mu\in \Lambda^+ (Y)$.
\item There exists $0< S\le R$ such that for all $0<r\le S$ the map $C_{F,r}^\infty (Y)\to \PW_r(\fb_\C^*,C^\infty_F (B))$ is a linear isomorphism.
\end{enumerate}
\end{thm}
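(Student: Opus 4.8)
The plan is to reduce the statement, one $K$-type at a time, to the Euclidean Paley--Wiener theorem on $\fa$ by means of a complexified horocycle transform, and then to bridge the gap between the discrete parameter set $\Lambda^+(Y)$ and the full space $\fb_\C^*$ by a sampling argument. Since every $f\in C^\infty_{F,r}(Y)$ is a finite sum of its $K$-isotypic components and both the transform $f\mapsto\widetilde f$ and the defining conditions of $\PW_r(\fb_\C^*,C^\infty_F (B))$ are $K$-equivariant, it suffices to treat $f$ lying in a single $K$-type. For such an $f$ the function $\mu\mapsto\widetilde f(\mu)$ takes values in a fixed finite-dimensional $K$-invariant subspace of $C^\infty(B)$, so the $B$-direction carries no analytic content and only the holomorphic extension in $\mu$ and the exponential estimate remain. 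Using the matrix-coefficient identity (\ref{eq-piMuIntK}) one writes $\widetilde f(\mu)$ as an integral of $f$ against the functions $e_{\mu+\rho,b}$, which are restrictions to $Y\subset X_\C$ of the functions $e_{\lambda,b}$ of the noncompact dual, and applies suitable shift operators in $U(\fg_\C)$ to reduce the single-$K$-type component to $U(\fg_\C)$-derivatives of the $K$-invariant spherical transform on $X$.

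For part (1) I would invoke the duality between $Y$ and $X=G/K$: on a suitable $K$-invariant domain $\cU\subset X_\C$ the compact spherical transform is, after the $\rho$-shift and in the authors' normalization, a restriction of Harish--Chandra's spherical transform on $X$, so $\widetilde f(\mu)$ is a Helgason Fourier transform evaluated at a $\rho$-shift of $\mu$ and extends holomorphically to all of $\fb_\C^*$. The exponential type is obtained exactly as in the noncompact theory: if $f$ is supported in the ball of radius $r$ (here $r<R$ and the inclusion of the ball in $\Xi\cap Y$ are used) then $\cR f$ is supported in $\{a\: |\log a|\le r\}$, and a Fourier--Slice identity relating $\widetilde f^c(\lambda,\cdot)$ to $\cF_A(\cR f)$ up to a $\rho$-shift in the spectral parameter, of the same shape as (\ref{eq-FSl2}), combined with the classical Paley--Wiener theorem on $\fa$, yields the bound $(1+|\lambda|^2)^N e^{-r|\Re\lambda|}$ for every $N$. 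The intertwining relation $\cA(w,-\lambda)\widetilde f(\lambda-\rho)=\widetilde f(w\lambda-\rho)$ recorded before the theorem then shows $\widetilde f^c\in\PW_r(\fb_\C^*,C^\infty_F (B))$, and continuity of $f\mapsto\widetilde f^c$ follows from these seminorm estimates.

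For part (2), given $\varphi\in\PW_r(\fb_\C^*,C^\infty_F (B))$, I would run this construction backwards: invert the Euclidean Fourier transform in $\lambda$, invert the horocycle transform, and check that the resulting smooth $K$-finite function $f$ on $Y$ is supported in the ball of radius $r$, by the same contour-shift argument (letting $|\eta|\to\infty$) as in the proof of Theorem \ref{firstTh}. One then verifies that the compact inversion formula $f(x)=\sum_{\mu\in\Lambda^+(Y)}d(\mu)(\widetilde f(\mu),\pi_\mu(x)e_\mu)$ holds, using the rapid decay in condition (2) against the polynomial growth of $d(\mu)$ and of the compact spherical functions. This produces $f$ with $\widetilde f(\mu)=\varphi(\mu)$ for all $\mu\in\Lambda^+(Y)$, but only on the discrete set $\Lambda^+(Y)$, not yet as an identity of holomorphic functions on $\fb_\C^*$.

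Part (3), and in particular the appearance of the threshold $S$, is the step I expect to be the main obstacle. Since the integrality conditions defining $\Lambda^+(Y)$ make it a translate of a full-rank lattice in $\fb^*$, one needs a uniqueness (sampling) statement of Paley--Wiener type: a holomorphic function on $\fb_\C^*$ of exponential type $r$ that vanishes on $\Lambda^+(Y)$ vanishes identically, provided $r$ is smaller than a constant $S$ governed by the covolume of the lattice; for larger $r$ this fails, which is exactly why (1) and (2) cannot be upgraded to an isomorphism for all $r<R$. Granting this, the $f$ from part (2) satisfies $\widetilde f^c=\varphi$ on all of $\fb_\C^*$, so for $0<r\le S$ the map $C^\infty_{F,r}(Y)\to\PW_r(\fb_\C^*,C^\infty_F (B))$ is a continuous bijection, hence a topological isomorphism by the open mapping theorem for Fr\'echet spaces. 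Making $S$ explicit, and pinning down the precise domain $\cU$ of holomorphy together with the $\rho$-shifted Fourier--Slice identity in the complexified setting, is the technical heart of the argument.
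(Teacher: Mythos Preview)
The paper's proof is by citation to \cite{OlafssonSchlichtkrull2010}, but the discussion immediately following the theorem sketches the actual argument, and your route diverges from it at the key technical step. For part (1) the paper does \emph{not} go through a horocycle transform on $Y$: there is no natural $N$-action on the compact space (the group $N$ is not a subgroup of $U$), so the object $\cR f$ you invoke and its support property $\{a:|\log a|\le r\}$ are undefined on the compact side. Instead the paper uses (\ref{eq-spherFunctEq}) to identify the compact spherical function $\psi_\mu$ with the holomorphic extension of the noncompact $\varphi_{\mu+\rho}$, and then appeals directly to Opdam's uniform exponential estimate for $\varphi_\lambda$ on $\Xi=K_\C\exp(\Omega)\cdot x_o$ to obtain the type-$r$ bound; this is why $R$ must be chosen so that $\overline{B_R(x_o)}\subset\Xi\cap Y$. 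The reduction from $K$-finite to $K$-invariant is carried out via Kostant's description of the spherical principal series (following Torasso), which is the precise mechanism behind your vaguer ``shift operators in $U(\fg_\C)$.'' If you tried to make your Fourier--Slice approach rigorous on $Y$ you would have to complexify and work with $N_\C$-orbits in $X_\C$, and the support and growth analysis there would most likely lead you back to the same spherical-function estimates anyway.

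On part (3) you have correctly located the obstruction, but the paper names the tool explicitly: Carlson's theorem, which says that a holomorphic function of exponential type $c<\pi$ (in the imaginary direction) vanishing on $\N^n$ must vanish identically, with $\sin(\pi z)$ showing that $c<\pi$ is sharp. This is precisely your ``sampling governed by the covolume of the lattice,'' and it is the reason $S$ may be strictly smaller than $R$. So your high-level architecture---reduce to $K$-invariant, extend holomorphically, control growth, then sample---matches the paper, but the middle step is executed by spherical-function estimates rather than by a Radon transform that does not exist on $Y$.
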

\begin{proof} See \cite{OlafssonSchlichtkrull2010}.
\end{proof}

Let us make a few comments on this theorem, its proof, and the different $R$ and $S$ that show up in the statement of the theorem. The proof is by reduction to the $K$-invariant case (\cite{OlafssonSchlichtkrullPW08}) using Kostant's description of the spherical principal series \cite{kostant75}, see also \cite[Ch. III]{helgason2008}.   This is an idea that was already used by P. Torasso in \cite{torasso77}. We would like to point out, that in \cite{torasso77} the fact that the Helgason Fourier transform on $X$ maps $C^\infty_r(X)$ into $\Hr$ (without the $K$-finiteness condition) was proved using the Fourier-Slice theorem (\ref{eq-FSl2}).

For the $K$-invariant case, as mentioned earlier, $\widehat{f}_\mu$ is a multiple of $e_\mu$, $\widehat{f}_\mu =(\widehat{f}_\mu, e_\mu)e_\mu$. We have
\[(\widehat{f}_\mu ,e_\mu)=\int_U f(u\cdot x_o)(\pi_\mu (u)e_\mu , e_\mu)\, du=\int_Y f(y)\psi_\mu (y)\, dy\]
where $\psi_\mu$ is the \textit{spherical function} $u\mapsto (\pi_\mu (u)e_\mu ,e_\mu)$. As $\pi_\mu$ extends to a holomorphic representation of $G_\C$, it follows that $\psi_\mu$ extends to a $K_\C$-invariant holomorphic function on $X_\C$. By (\ref{eq-sphericalFunction}) and (\ref{eq-piMuIntK}) we get that
\begin{equation}\label{eq-spherFunctEq}
\psi_\mu|_X =\varphi_{\mu+\rho}\, .
\end{equation}

According to \cite{BransonOlafssonPasquale2005,KrotzStanton2005} the function $x\mapsto \varphi_\mu$ extends to a $K_\C$-invariant holomorphic function on $\widetilde{\Xi}=K_\C\exp (2i\Omega)\cdot x_o$ where $\Omega:=\{X\in \fa\: (\forall \alpha \in \Sigma)\,\, |\alpha (X)|<\pi /2\}$. This gives a holomorphic extension $\widehat{f}^c$ of $\mu \mapsto \widehat{f}(\mu )$ for $f$ $K$-invariant and supp$(f)\subseteq \overline{\widetilde{\Xi}\cap Y}$:
\[\widehat{f}^c(\lambda )=\int_U f(x)\varphi_{\lambda +\rho}(x)\, dx\, .\]
The holomorphic extension satisfies $\widehat{f}^c(\lambda) =\widehat{f}^c(w(\lambda + \rho)-\rho)$ because of the Weyl group invariance of $\lambda \mapsto \varphi_\lambda$.

To show that $\widetilde{f}^c$ has exponential growth one needs to show that the spherical functions are of exponential growth. That has been shown only on $\Xi = K_\C \exp (\Omega )\cdot x_o$, see \cite{opdam95}, Theorem 6.1. Thus $R$ has to be so that $\overline{B_R(x_o)} \subseteq \Xi\cap Y$, forcing $R$ to be, in general, much smaller than the injectivity radius.

For $Y=\bs^n =\SO (n+1)/\SO (n ) =\SO (n+1)\cdot e_1$ we have $\Sigma=\{\alpha ,-\alpha\}$ such that $\alpha (H_o) =1$ with $H_o=E_{2,1}-E_{1,2}$, $E_{\nu,\mu}=(\delta_{i\nu}\delta_{j\mu})_{i,j}$ and $\Lambda^+(\bs^n)=\N \alpha$. Therefore we view the spherical Fourier transform of $f$ as a function $\widehat{f} : \N\to \C$. Note that $\exp (tH_o)=\cos (t)e_1+\sin (t)e_2$. Thus $\widetilde{\Xi}\cap Y=\bs^n \setminus \{-e_1\}$.
For the holomorphic extension of $\widetilde{f}$, we only need $f$ to vanish at $\{-e_1\}$. But
\begin{eqnarray*}
\Xi\cap \bs^n &=&\SO (n)\cdot \{ \cos (t)e_1+\sin (t)e_2 \: -\frac{\pi}{2}<t< \frac{\pi}{2}\}\\
							&=&\{(x_1,\ldots ,x_{n+1})\: x_1\ge 1\}.
\end{eqnarray*}
Thus, for the exponential growth  we have to assume that the support of $f$ is contained in the upper hemisphere $\bs^n_+=\{(x_1,\ldots ,x_{n+1})\: x_1\ge 1\}$.

The constant $S$ is needed because of the Carlson's theorem, \cite[p. 153]{boas54} and \cite[Lemma 7.1]{OlafssonSchlichtkrullPW08}: Let $F :\C^n\to \C$ be a holomorphic function such that $F(z)=0$ for all $z\in \N^n$, $j=1,\ldots ,n$. We have $|F(z+\eta e_j)|\le C_1 e^{\tau |\eta|}$ and $|F(z+iy e_j)|\le C_2 e^{c|y|}$ for some constants $C_1,C_2,\tau$ and $0\le c<\pi$, and so $F=0$. Taking $F(z)=\sin (\pi z)$ shows that the condition $c<\pi$ is necessary.

In \cite{abouelaz2001} a Paley-Wiener theorem for the sphere was proved for $K$-invariant functions $f$ such that
the function $t\mapsto f(\cos (t)e_1+\sin (t)e_2)$ and its first $n-3$ derivatives vanishes at $t=\pi$. The main idea of the proof is a Fourier-Slice type theorem. Identify $K$-invariant functions on the sphere with even functions on $[-\pi,\pi]$ by $F(t)=f(\cos (t)e_1+\sin (t)e_2)$. Note that $\rho=(n-1)/2$ if we identify $\fa^*_\C$  with the complex plane by $z\mapsto z\alpha$. For $0\le s\le \pi$ define
\[\cR (f)(s ):=\frac{2^\rho\rho}{\pi}\int_{s}^\pi F(t)\sin (t)(\cos (s)-\cos (t))^{\rho-1}\, dt\, .\]
Then we have the Fourier-Slice theorem \cite[Thm. 6]{abouelaz2001}:
\begin{equation}\label{eq-FSlSn}
\widehat{f}(m )=c\int_0^\pi \cos ((m+\rho)t)\cR(f) (t)\, dt\, .
\end{equation}

Now the Theorem 9 in \cite{abouelaz2001} says that if $f\in C^\infty (Y)^K$ vanishes of order $n-3$ at the south pole, then for $0< r \le \pi$, supp$(\cR(f))\subseteq [-r,r]$ if and only if supp$(F)\subseteq [-r,r]$. This, together with the Fourier-Slice theorem (\ref{eq-FSlSn}), implies that $\widetilde{f}$ extends to a holomorphic function of exponential growth such that $\widetilde{f}^c (-z-\rho )=\widetilde{f}^c(z-\rho)$. The fact that every holomorphic function of exponential growth $r$ is a holomorphic extension of a smooth function with support in $K\exp ([0,r])\cdot e_1$ is proved in a similar way using the inversion formula for the Radon transform. So, again, the Fourier-Slice theorem plays a fundamental role! We note, that if $n$ is odd, then $m_\alpha =n-1$ is even. Hence that case is also covered by \cite[Thm. 38]{BransonOlafssonPasqualeH}. It would be interesting to generalize the approach in \cite{abouelaz2001} to other rank one spaces or even higher rank compact symmetric spaces.

Staying with the example $Y=\bs^n$ we note that now
\[B=\SO (n)/\SO (n-1)=\{(0,x)\: x\in \sn\}\, .\]
The ``exponential function'' $e_{m,b}(x)$ is given by
\[e_{m,b}(z)=(z,(1,ib))^m=(z_1+i(z_2b_2+\ldots +z_{n+1}b_{n+1}))^m\, .\]
Hence $e_{\lambda ,b}(z)$ is well defined for all $\lambda$ as long as $z$ is in the domain  $\{z\in \bS^n_\C\: (\forall b\in \sn )\,\, z_1+i(z_2b_2+\ldots +z_{n+1}b_{n+1})\in \C\setminus (-\infty ,0]\}$. If $z\in \bs^n$ and $b\in \sn$, then that is equivalent to $z_1>0$, i.e., $z\in \bS^n_+$.

%%%%%%%%%%%%%%%%%%%%%%%%%%%%%%%%%%%%%%%%%%%%%%%%%%%%%%%%%%%%%%%%%%%%%%%%%%%%%%%%%%%%%%%%%%%%%%%%%%%%%%%%%%%%%%%%%%%%%%%%%%%%%%%%%%%%%%%%%%%%%%%%%%%%%%%%%%%  Section: The Inductive Limit of Symmetric Spaces %%%%%%%%%%%%%%%%%%%%%%%%%%%%%%%%%%%%%%%%%%%%%%%%%%%%%%%%%% %%%%%%%%%%%%%%%%%%%%%%%%%%%%%%%%%%%%%%%%%%%%%%%%%%%%%%%%%%%%%%%%%%%%%%%%%%%%%%%%%%%%%%%%%%%%%%%%%%%%%%%%%%%%%%%%%%%%%%%%%%%%%%%%%%%%
\section{The Inductive Limit of Symmetric Spaces}\label{SectionILofSS}
\noindent
One of the interesting aspects of the Paley-Wiener theory for $\R^n$ and semisimple symmetric spaces is that many of these results extend to some special classes of inductive limits of these spaces, see \cite{OlafssonWolf2009}.

The Euclidean case is a consequence of the results by Cowling \cite{cowling86} and Rais \cite{rais83}. Let $k\ge n$ and view $\R^n\simeq$ as a subspace of $\R^k$ by
 \[\R^n\simeq \{(x_1,\ldots ,x_n, 0,\ldots ,0)\: x_j\in \R\}\subseteq \R^k\, .\]
Assume that $W (n)$ is a finite reflection group acting on $\R^n$ and that $W(k)$ is a finite reflection group acting on $\R^k$. Set
\begin{equation}\label{defWnk}
W_n(k):= \{w\in W(k)\: w(\R^n)=\R^n\}\, .
\end{equation}
Then $W_n(k)$ is a subgroup of $W(k)$. Denote by $\C[\R^n]$ the algebra of polynomial maps $\R^n\to \C$. A subgroup $G\subseteq \GL (n,\R)$  acts on $\C [\R^n]$ by $g\cdot p(x)=p(g^{-1}(x))$. We denote by $\C [\R^n]^{G}$ the algebra of invariant polynomials.

\begin{thm}[\cite{OlafssonWolf2009}, Theorem 1.9]\label{OW1} Assume that $W_{n}(k)|_{\R^n}=W(n)$ and that the restriction map
$ \C [\R^k]^{W(k)}\to \C [\R^n]^{W(n)}$ is surjective. Then the restriction map
\[R^k_n:\PW_r(\C^k)^{W(k)}\to \PW_r(\C^n)^{W(n)}\, ,\quad F\mapsto F|_{\C^n}\]
is surjective for all $r>0$.
\end{thm}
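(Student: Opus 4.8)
The plan is to transport the statement to the ``real side'' via the classical Paley--Wiener theorem and there recognize it as a lifting problem for compactly supported invariant functions; recall that only surjectivity --- not a topological statement --- is being claimed. For $m\in\{n,k\}$ the classical Paley--Wiener theorem gives a linear isomorphism $\FT_{\R^m}\colon\DS_{r/2\pi}(\R^m)\xrightarrow{\sim}\PW_r(\C^m)$, and since every $w\in W(m)$ is orthogonal one has $\FT_{\R^m}(f\circ w)=\FT_{\R^m}(f)\circ w$, so this restricts to $\DS_{r/2\pi}(\R^m)^{W(m)}\cong\PW_r(\C^m)^{W(m)}$. Write $\R^k=\R^n\oplus\R^{k-n}$ with orthogonal projection $\pr\colon\R^k\to\R^n$ and, for $g$ on $\R^k$, set $\pr_*g(x'):=\int_{\R^{k-n}}g(x',x'')\,dx''$. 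Since $\widehat g(\lambda',0)=\widehat{\pr_*g}(\lambda')$, under the Fourier transform the restriction map $R^k_n\colon F\mapsto F|_{\C^n}$ corresponds to $\pr_*$. Because $|\pr(x)|\le|x|$, the operator $\pr_*$ maps $\DS_s(\R^k)$ into $\DS_s(\R^n)$ for every $s>0$, and since by hypothesis every $w_1\in W(n)$ is the restriction to $\R^n$ of some $w\in W_n(k)$ --- which then also preserves $\R^{k-n}$ --- one checks that $\pr_*$ sends $\DS_s(\R^k)^{W(k)}$ into $\DS_s(\R^n)^{W(n)}$ (and that $R^k_n$ is well defined). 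It therefore suffices to prove that for every $s>0$ the map
\[\pr_*\colon\DS_s(\R^k)^{W(k)}\longrightarrow\DS_s(\R^n)^{W(n)}\]
is surjective.

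Forgetting invariance for a moment, surjectivity of $\pr_*\colon\DS_s(\R^k)\to\DS_s(\R^n)$ is elementary: given $f$, a preimage is
\[h(x',x''):=f(x')\,\delta(x')^{-(k-n)}\,\psi\!\big(x''/\delta(x')\big),\qquad\delta(x'):=\sqrt{s^2-|x'|^2},\]
for a fixed $\psi\in\Cic(\R^{k-n})$ with $\int\psi=1$; because $f$ and all its derivatives vanish on $\{|x'|=s\}$ one has $|f(x')|\le C_L(s-|x'|)^L$ for every $L$, which dominates the negative powers of $\delta$ produced by differentiation and makes $h$ smooth up to the boundary, with $\mathrm{supp}\,h\subseteq\overline{B_s(0)}\subset\R^k$. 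The real difficulty is to make the lift $W(k)$-invariant. Averaging, $h^{W}:=|W(k)|^{-1}\sum_{w\in W(k)}h\circ w^{-1}$ is $W(k)$-invariant and still supported in $\overline{B_s(0)}$, but only the $w\in W_n(k)$ respect the splitting $\R^k=\R^n\oplus\R^{k-n}$, so $\pr_*h^{W}=cf+E$ with $c=|W_n(k)|/|W(k)|>0$ and an error $E\in\DS_s(\R^n)^{W(n)}$ which is a sum, indexed by the nontrivial cosets of $W_n(k)$ in $W(k)$, of ``tilted'' fibre integrals of $f$. One must show this error can be absorbed without leaving the ball of radius $s$.

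This last point is the crux, and it is exactly what the two hypotheses are for. The equality $W_n(k)|_{\R^n}=W(n)$ pins down the part of $W(k)$ visible on $\R^n$, and surjectivity of $\C[\R^k]^{W(k)}\to\C[\R^n]^{W(n)}$ means that the Chevalley quotient maps fit into a closed embedding $\C^n/W(n)\hookrightarrow\C^k/W(k)$ (these quotients being affine spaces by Chevalley's theorem) compatible with the linear inclusion $\C^n\hookrightarrow\C^k$. I would then attack the correction of $E$ by lifting the fundamental invariants of $W(n)$ to $W(k)$-invariant polynomials --- each of which acts as a multiplier on $\PW_r(\C^k)^{W(k)}$ --- and using Steinberg's description of point (hence subspace) stabilizers in reflection groups to control the coset geometry well enough to cancel $E$; alternatively, one can reformulate the goal as a Cartan-type extension statement across the closed embedding above with the exponential-type bound preserved, guided by the linear retraction $\pr$. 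The genuinely hard step --- and, I expect, essentially the whole content of the theorem --- is the simultaneous control of holomorphy (equivalently $C^\infty$-ness on the real side), $W(k)$-invariance, and the unchanged exponential type $r$ (equivalently the unchanged support radius $s$): naive symmetrization keeps the first and third but breaks the fibre-integration identity, while a naive retraction breaks invariance, and reconciling them is the point. Once this is in place, the remaining ingredients --- the Fourier-transform dictionary, the boundary-flatness estimate, and the bookkeeping with cosets of $W_n(k)$ --- are routine, and for the core difficulty one can ultimately appeal to the results of Cowling and Rais cited in the text \cite{cowling86,rais83}.
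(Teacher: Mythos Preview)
Your write-up correctly identifies the ingredients (Cowling for the non-invariant lift, Rais for the invariance) and gives a clean real-side dictionary, but it does not actually carry out the key step: after averaging you are left with $\pr_*h^{W}=cf+E$ and you never show how to kill $E$. The paragraph that follows is a list of possible tools (lifting Chevalley generators, Steinberg stabilizers, a Cartan-type extension), not an argument; and the closing sentence ``for the core difficulty one can ultimately appeal to the results of Cowling and Rais'' is precisely what needs to be explained. In particular, an iterative ``correct the error'' scheme is circular here: $E$ is again a generic element of $\DS_s(\R^n)^{W(n)}$, with no obvious smallness, so you are back to the original problem.

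The paper's proof is short and avoids your error term altogether by using Rais's theorem in a different way than you suggest. Work directly on the Paley--Wiener side. Given $F\in\PW_r(\C^n)^{W(n)}$, Cowling gives $G\in\PW_r(\C^k)$ with $G|_{\C^n}=F$; average $G$ over the subgroup $W_n(k)$ (not over all of $W(k)$), which preserves both the type $r$ and the restriction to $\C^n$ because $W_n(k)|_{\R^n}=W(n)$ and $F$ is $W(n)$-invariant. Now comes the point you are missing: Rais's theorem says one can write $G=\sum_j p_j G_j$ with $G_j\in\PW_r(\C^k)^{W(k)}$ already fully $W(k)$-invariant and $p_j$ polynomials. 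Averaging the $p_j$ over $W_n(k)$ (harmless, since $G$ and the $G_j$ are $W_n(k)$-invariant) gives $p_j\in\C[\R^k]^{W_n(k)}$, so $p_j|_{\R^n}\in\C[\R^n]^{W(n)}$. The polynomial surjectivity hypothesis then produces $q_j\in\C[\R^k]^{W(k)}$ with $q_j|_{\R^n}=p_j|_{\R^n}$, and $H:=\sum_j q_j G_j\in\PW_r(\C^k)^{W(k)}$ satisfies $H|_{\C^n}=\sum_j (p_j|_{\C^n})(G_j|_{\C^n})=G|_{\C^n}=F$. The moral: do not average over $W(k)$ and then try to repair the damage; instead use Rais to write the $W_n(k)$-invariant lift as a polynomial combination of $W(k)$-invariant Paley--Wiener functions, and adjust only the polynomial coefficients --- exactly where the second hypothesis applies.
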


\begin{proof} It is clear that   $R^k_n (\PW(\C^k)^{W(k)})\subseteq \PW(\C^n)^{W(n)} $. For the surjectivity let $G\in \PW(\C^n)^{W(n)} $. By the surjectivity result in \cite{cowling86} there exists $G\in \PW_r(\C^k)$ such that $G|_{\C^n}=F$. As $F$ is $W(n)$-invariant and by our assumption that
$W_{n}(k)|_{\R^n}=W(n)$, we can average $G$ over $W_n(k)$. Hence we can assume that $G$ is $W_n(k)$ invariant.  According to \cite{rais83}, there exists $G_1,\ldots ,G_k\in  P(\R^k)^{W(k)}$ and $p_1,\ldots ,p_k\in P(\R^k)$ such that
\[G=p_1G_1+\ldots +p_kG_k\, .\]
Again, by averaging, we can assume that $p_j\in P(\R^k)^{W_n(k)}$. But then $p_j|_{\R^n}\in P(\R^n)^{W(n)}$ and by our assumption there exists $q_j\in P(\R^k)^{W(k)}$ such that $q_j|_{\R^n} =p_j$. Let $H:=q_1G_1+\ldots +q_kG_k$. Then $H\in \PW_r(\C^k)^{W(k)}$ and $H|_{\C^n}=F$.
\end{proof}

Note that the on the level of smooth functions, the above restriction map corresponds to
\begin{equation}\label{eq-contProj}
f\mapsto (x\mapsto C^k_n(f)(x):=\int_{(\R^n)^\perp} f(x,y)\, dy )
\end{equation}
which by Theorem \ref{OW1} induces a surjective map $C^\infty_r(\R^k)\to C^\infty_r(\R^n)$.

Theorem \ref{OW1} leads to a projective system $\{(P(\R^k)^{W(k)},R^k_n)\}$ with surjective projections. The projective limit $\varprojlim \PW_r(\C^n)^{W(n)}$ with the surjective projection $R^\infty_k :\varprojlim \PW_r(\C^n)^{W(n)}\to  \PW_r(\C^k)^{W(k)}$  can be viewed as a space of functions on $\R_\infty =\varinjlim \R^n$, the space of all finite real sequences, by
\[F ((x_1,\ldots ,x_k,0,\ldots ))=  R^\infty_k(F)(x_1,\ldots ,x_k)\, .\]
It is easy to see that this definition is independent of the choice of $k$ such that $(x_j)\in \R^k$. Furthermore, $R^\infty_k$ is surjective. Similar statement holds for compactly supported smooth functions by using the projection maps (\ref{eq-contProj}), see the commutative diagram (\ref{eq-commutativeDia}) which can also be used for $\R^n$.

Without going into details, we note that we can also have projective limits for the space $\cD_{H,R}(\Xi_n)$ and $\widetilde{\PW}^{\Z_2}_{H,R}(\C\times \bS_{\C}^{n-1})$ and that the commutative diagram (\ref{eq-ComDia1}) gives a similar commutative diagram for the limits. In this setting the limit of the vertical arrows has a nice interpretation as an infinite dimensional Radon transform. First embed $\bS^{n-1}$ into $\bS^{k-1}$. Then we have a well defined restriction map $r^k_nf:= f|_{\mathbb{R}\times \bS^{n-1}}$ where $f$ is a function on $\R\times \bS^{k-1}$ by $\omega \mapsto (\omega ,0)$. If $f\in C^\infty_r(\Rn )$ then (\ref{eq-contProj}) implies that
\begin{equation}\label{eq-rad}
\cR_{\R^n}(C^k_n(f))(p,\omega) =r^k_n(\cR_{\R^k}(f))(p,\omega )\, ,\quad p\in\R ,\, \omega \in \bS^{n-1}
\end{equation}
which leads to surjective map
\[\cR_\infty : \varprojlim C^\infty_R(\R^n)\to \varprojlim C^\infty_{H,R}(\Xi_n)\]
such that
\[r^\infty_n (\cR_\infty (F))=\cR_n(C^\infty_n(F))\, .\]

Let us now turn our attention to symmetric spaces. To avoid introducing too much new notation we will concentrate on symmetric spaces of noncompact type and only say a few words about the compact case. We use the notation from pervious sections and add to it an index $n$ or $(n)$ wherever needed, to indicate the dependence of the symmetric space $X_n=G_n/K_n$ or $Y_n=U_n/K_n$ on $n$.
Let $\Sigma_{1/2} :=\{\alpha\in\Sigma \: \frac{1}{2}\alpha\not\in \Sigma\}$. Then $\Sigma_{1/2}$  is a root system. From now on we assume that $\Sigma_{1/2}$ is classical as finitely many exceptional cases can be removed from any projective sequence without changing the limit. Let $\Psi:=\{\alpha_1,\, \ldots ,\alpha_k\}$ be the set of simple roots. We number the roots so that the corresponding Dynkin diagram is
\begin{equation}\label{rootorder}
\begin{aligned}
&\begin{tabular}{|c|l|c|}\hline
$\Psi=A_k$&
\setlength{\unitlength}{.5 mm}
\begin{picture}(155,18)
\put(5,2){\circle{2}}
\put(2,5){$\alpha_{k}$}
\put(6,2){\line(1,0){13}}
\put(24,2){\circle*{1}}
\put(27,2){\circle*{1}}
\put(30,2){\circle*{1}}
\put(34,2){\line(1,0){13}}
\put(48,2){\circle{2}}
\put(49,2){\line(1,0){23}}
\put(73,2){\circle{2}}
\put(74,2){\line(1,0){23}}
\put(98,2){\circle{2}}
\put(99,2){\line(1,0){13}}
\put(117,2){\circle*{1}}
\put(120,2){\circle*{1}}
\put(123,2){\circle*{1}}
\put(129,2){\line(1,0){13}}
\put(143,2){\circle{2}}
\put(140,5){$\alpha_1$}
\end{picture}
&$k\geqq 1$
\\
\hline
\end{tabular}\\
&\begin{tabular}{|c|l|c|}\hline
$\Psi=B_k$&
\setlength{\unitlength}{.5 mm}
\begin{picture}(155,18)
\put(5,2){\circle{2}}
\put(2,5){$\alpha_{k}$}
\put(6,2){\line(1,0){13}}
\put(24,2){\circle*{1}}
\put(27,2){\circle*{1}}
\put(30,2){\circle*{1}}
\put(34,2){\line(1,0){13}}
\put(48,2){\circle{2}}
\put(49,2){\line(1,0){23}}
\put(73,2){\circle{2}}
\put(74,2){\line(1,0){13}}
\put(93,2){\circle*{1}}
\put(96,2){\circle*{1}}
\put(99,2){\circle*{1}}
\put(104,2){\line(1,0){13}}
\put(118,2){\circle{2}}
\put(115,5){$\alpha_2$}
\put(119,2.5){\line(1,0){23}}
\put(119,1.5){\line(1,0){23}}
\put(143,2){\circle*{2}}
\put(140,5){$\alpha_1$}
\end{picture}
&$k\geqq 2$\\
\hline
\end{tabular} \\
&\begin{tabular}{|c|l|c|}\hline
$\Psi=C_k$ &
\setlength{\unitlength}{.5 mm}
\begin{picture}(155,18)
\put(5,2){\circle*{2}}
\put(2,5){$\alpha_{k}$}
\put(6,2){\line(1,0){13}}
\put(24,2){\circle*{1}}
\put(27,2){\circle*{1}}
\put(30,2){\circle*{1}}
\put(34,2){\line(1,0){13}}
\put(48,2){\circle*{2}}
\put(49,2){\line(1,0){23}}
\put(73,2){\circle*{2}}
\put(74,2){\line(1,0){13}}
\put(93,2){\circle*{1}}
\put(96,2){\circle*{1}}
\put(99,2){\circle*{1}}
\put(104,2){\line(1,0){13}}
\put(118,2){\circle*{2}}
\put(115,5){$\alpha_2$}
\put(119,2.5){\line(1,0){23}}
\put(119,1.5){\line(1,0){23}}
\put(143,2){\circle{2}}
\put(140,5){$\alpha_1$}
\end{picture}
& $k\geqq 3$
\\
\hline
\end{tabular}\\
&\begin{tabular}{|c|l|c|}\hline
$\Psi=D_k$ &
\setlength{\unitlength}{.5 mm}
\begin{picture}(155,20)
\put(5,9){\circle{2}}
\put(2,12){$\alpha_{k}$}
\put(6,9){\line(1,0){13}}
\put(24,9){\circle*{1}}
\put(27,9){\circle*{1}}
\put(30,9){\circle*{1}}
\put(34,9){\line(1,0){13}}
\put(48,9){\circle{2}}
\put(49,9){\line(1,0){23}}
\put(73,9){\circle{2}}
\put(74,9){\line(1,0){13}}
\put(93,9){\circle*{1}}
\put(96,9){\circle*{1}}
\put(99,9){\circle*{1}}
\put(104,9){\line(1,0){13}}
\put(118,9){\circle{2}}
\put(113,12){$\alpha_3$}
\put(119,8.5){\line(2,-1){13}}
\put(133,2){\circle{2}}
\put(136,0){$\alpha_1$}
\put(119,9.5){\line(2,1){13}}
\put(133,16){\circle{2}}
\put(136,14){$\alpha_2$}
\end{picture}
& $k\geqq 4$
\\
\hline
\end{tabular}
\end{aligned}
\end{equation}
We note that $\Sigma_{1/2}=\Sigma$ except in the cases $\SU (p,q)/\SU (p+q)$, $\Sp (p,q)/\Sp (p)\times \Sp (p)\times \Sp (q)$ for  $1\le p <q$, $\SO^*(2j)$ for $j$ odd, and for the compact dual spaces.

Let $X_1=G_1/K_1\subseteq X_2=G_2/K_2$ be two irreducible symmetric spaces of the compact or noncompact type. We say that $G_2/K_2$ propagates $G_1/K_1$ if the following holds (with the obvious notation):
\begin{enumerate}
\item $G_1\subseteq G_2$, $K_1\subseteq K_2$, at least up to covering, and hence $G_1/K_1\hookrightarrow G_2/K_2$ and    $\fs_1\subseteq \fs_2$,
\item If we choose $\fa_1\subseteq \fa_2$, then $\Sigma (1)\subseteq \{\alpha_{\fa_1}\: \alpha\in\Sigma (2)\}$ and
the Dynkin diagram for $\Psi (2)$ is gotten from that of $\Psi (1)$ by adding simple roots at the \textit{right} end of the Dynkin diagram for $\Psi (1)$.
\end{enumerate}
Simple examples are $\bs^n\subseteq \bs^k$ and $G_{i,n} (\K)\subseteq G_{i,k} (\K)$ for $k\ge n$, where $G_{i,j} (\K)$ stands for the space of $i$-dimensional subspaces of $\K^j$, and $\K=\R$, $\C$, or $\H$.

In general we say that the symmetric space $G_2/K_2$ propagates $G_1/K_1$ if we can write $G_2/K_2$ up to covering as $G_2^1/K_2^1\times \ldots \times G_2^n/K_2^n$ where each $G_2^j/K_2^j$ is irreducible and similarly $G_1/K_1$ locally isomorphic to $G_1^1/K_1^1\times \ldots \times G_1^k/K_1^k$ with $k\le n $ such that $G_2^j/K_2^j$ is a propagation of $G_1^j/K_1^j$ for $j\le k$. From now on we will assume that $G_2/K_2$ is a propagation of $G_1/K_1$ and that $G_j/K_j$, $j=1,2$, is of noncompact type. Similarly $U_2/K_2$ is a propagation of $U_1/K_1$ and $U_j/K_j$, $j=1,2$ is of compact type.  We will always assume that $\fa_1\subseteq \fa_2$.

\begin{thm}\label{th-AdmExtG/K} Assume that $X_k$ and $X_n$ are
symmetric spaces of compact or noncompact type and that $X_k$ propagates
$X_n$.  Denote by $W (n)$, respectively $W (k)$, the Weyl group related to $X_n$, respectively $X_k$. Let $W_n(k):=\{w\in W(k)\: w(\fa_n) = \fa_n\}$.

\begin{enumerate}
\item  If $X_n$ does not contain any irreducible factors
with $\Psi_{1/2} (n)$ of type $D$, then
\begin{equation}\label{eq-RestrictionOfWeyl1}
W_{n}(k)|_{\fa_n} = W(n)
\end{equation}
and the restriction map $\C [\fa_k]^{W(k)}\to \C [\fa_n]^{W(n)}$ is surjective.
\item Assume that $X_n$ and $X_k$ are of type $D$. Then
$W (n)$ is the group $\gamma_n$ of permutations of $n$ objects semidirect product with all even number of sign changes whereas $W_n(k)|_{\fa_n}$ is the group $\gamma_n$ semidirect product of all sign changes.
 \item If $X_n$ and $X_k$ are of type $D$, then $\C [\fa_k]^{W(k)}|_{\fa_n}$ is the algebra of even $\gamma_n$-invariant polynomials and $\C [\fa_k]^{W(k)}|_{\fa_n}\subsetneqq P (\fa_n)^{W(n)}$.
\end{enumerate}
\end{thm}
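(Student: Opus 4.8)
The plan is to reduce to irreducible symmetric spaces and then argue in the standard coordinate models of the classical root systems $A$, $B$, $C$, $D$. Since propagation is defined factor by factor, write $X_n$ and $X_k$ as products of irreducible symmetric spaces so that $\fa_n=\bigoplus_j\fa_n^{(j)}\subseteq\bigoplus_{j'}\fa_k^{(j')}=\fa_k$, with $W(n)=\prod_jW(n)^{(j)}$ and $W(k)=\prod_{j'}W(k)^{(j')}$ acting block-diagonally, the factors of $X_k$ not paired with a factor of $X_n$ meeting $\fa_n$ only in $0$. Then $W_n(k)|_{\fa_n}$ is the product over the paired factors of the corresponding restrictions, and under $\C[\fa_n]^{W(n)}=\bigotimes_j\C[\fa_n^{(j)}]^{W(n)^{(j)}}$ and $\C[\fa_k]^{W(k)}=\bigotimes_{j'}\C[\fa_k^{(j')}]^{W(k)^{(j')}}$ the restriction map becomes a tensor product of the per-factor restriction maps, the unpaired factors of $X_k$ contributing the ``evaluate at $0$'' map onto $\C$; a tensor product of surjections is surjective. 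Since appending nodes at one end of a Dynkin diagram preserves the classical family, it suffices to treat $X_n$ and $X_k$ irreducible with $\Psi(n),\Psi(k)$ of the same type; I also assume $k>n$, the case $k=n$ being vacuous.

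In the standard model, for types $B$ and $C$ one has $\fa_m=\R^m$ with $W(m)$ the full group of signed permutations of the coordinate lines $\epsilon_1,\dots,\epsilon_m$; for type $A$, $\fa_m=\{x\in\R^{m+1}:\sum x_i=0\}$ with $W(m)=S_{m+1}$; for type $D$, $\fa_m=\R^m$ with $W(m)$ the index-two subgroup of signed permutations effecting an even number of sign changes. The propagation hypothesis that the diagram of $\Psi(k)$ arises from that of $\Psi(n)$ by appending nodes at an end forces, after relabeling coordinates, $\fa_n$ to be the coordinate subspace of $\fa_k$ obtained by setting to zero a block $B^c$ of $k-n$ coordinates, with $\Sigma(n)$ the classical subsystem of $\Sigma(k)$ supported on the complementary block $B$. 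Hence $W_n(k)=\{w\in W(k):w(\fa_n)=\fa_n\}$ consists precisely of the elements preserving the partition of the coordinates into $B$ and $B^c$, and restriction to $\fa_n$ is projection onto the $B$-component.

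For part (1), in types $A$, $B$, $C$ the group $W_n(k)$ is a direct product, one factor acting on the $B$-coordinates as $W(n)$ and the other on the $B^c$-coordinates; thus $W_n(k)|_{\fa_n}=W(n)$. Surjectivity of $\C [\fa_k]^{W(k)}\to\C [\fa_n]^{W(n)}$ is checked on generators via power sums: $\C[\fa_n]^{W(n)}$ is generated by $\sum_{i\in B}x_i^{2j}$ for $j=1,\dots,n$ in types $B$, $C$, and by $\sum_{i\in B}x_i^{j}$ for $j=2,\dots,n+1$ in type $A$, and each of these is the restriction to $\fa_n$ of the analogous power sum over all the coordinates of $\fa_k$, which lies in $\C[\fa_k]^{W(k)}$. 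For parts (2) and (3) (type $D$): the identity $W(n)=\gamma_n\ltimes(\text{even sign changes})$ is the standard description of $W(D_n)$ in this model. For the second assertion of (2), write $w\in W_n(k)$ as a pair $(w_B,w_{B^c})$ of signed permutations; membership in $W(D_k)$ only requires the total number of sign changes of $w_B$ and $w_{B^c}$ to be even, and since $B^c$ is nonempty one can realize any signed permutation $w_B$ of $B$ by adjusting the parity of the sign changes on $B^c$. Hence $W_n(k)|_{\fa_n}=\gamma_n\ltimes(\text{all sign changes})$, the full hyperoctahedral group, which strictly contains $W(D_n)=W(n)$. For (3), recall the classical fact $\C[\fa_k]^{W(D_k)}=\C[\fa_k]^{W(B_k)}\oplus(\epsilon_1\cdots\epsilon_k)\,\C[\fa_k]^{W(B_k)}$; the second basis element $\epsilon_1\cdots\epsilon_k$ restricts to $0$ on $\fa_n$ because one of its factors lies in $B^c$, so $\C [\fa_k]^{W(k)}|_{\fa_n}=\C[\fa_k]^{W(B_k)}|_{\fa_n}$, which by the power-sum computation equals $\C[\fa_n]^{W(B_n)}$, the algebra of $\gamma_n$-invariant polynomials in the $B$-coordinates that are even in each of them. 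This is strictly contained in $P (\fa_n)^{W(n)}=\C[\fa_n]^{W(D_n)}$, since the product $\prod_{i\in B}x_i$ of the coordinates of $\fa_n$ is $W(D_n)$-invariant but not even.

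The only delicate point is the bookkeeping that converts the abstract propagation hypothesis into the concrete picture above: checking that $\fa_n$ is genuinely a coordinate subspace with $W_n(k)$ its block-stabilizer, and, in type $D$, keeping track of the sign-change parities when passing between $W(D_k)$, its block-stabilizer, and the restriction to $\fa_n$. The freeness of $\C[\fa_k]^{W(D_k)}$ as a rank-two module over $\C[\fa_k]^{W(B_k)}$ with second basis vector $\epsilon_1\cdots\epsilon_k$ is standard, and the content of (3) is simply that this basis vector dies on restriction.
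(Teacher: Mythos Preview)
The paper does not actually prove this theorem: Section~\ref{SectionILofSS} is a review of results from \cite{OlafssonWolf2009}, and Theorem~\ref{th-AdmExtG/K} is stated there without proof, followed only by the remark that the Pfaffian restricts to zero and that this explains the failure of surjectivity in part~(3). So there is no proof in the paper to compare against; your argument has to be assessed on its own.

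Your proof is correct and is the natural case-by-case verification in the standard coordinate realizations of the classical root systems. The reduction to irreducible factors is sound, the identification of $W_n(k)$ with the block stabilizer of the partition $B\sqcup B^c$ is exactly right in each type, and the power-sum argument gives surjectivity in types $A$, $B$, $C$ cleanly. In type~$D$ your parity bookkeeping for part~(2) is correct, and your use of the decomposition $\C[\fa_k]^{W(D_k)}=\C[\fa_k]^{W(B_k)}\oplus(\epsilon_1\cdots\epsilon_k)\,\C[\fa_k]^{W(B_k)}$ for part~(3) is precisely the mechanism the paper alludes to in its remark about the Pfaffian. One small omission: the restricted root system $\Sigma$ can be non-reduced (type $BC$), which is why the paper works with $\Sigma_{1/2}$; you should note that $W(BC_m)$ coincides with the full hyperoctahedral group, so the $BC$ case is subsumed by your $B$/$C$ computation and nothing changes.
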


We remark the Pfaffian in $\C [\fa_k]^{W(k)}$ restricts to zero and all the elements in $\C [\fa_k]^{W(k)}|_{\fa_n}$ are even in the $D$-case. That is why the restriction map in (3) fails to be surjective.

We will from now on assume that $X_j$ (or  $Y_j$) is a sequence of symmetric spaces so that $X_k$ propagates $X_n$ for $n\le k$. We will also assume that none of the spaces contains a factor of type $D$ and similarly for the compact spaces $Y_j$. For the general statement we refer to \cite{OlafssonWolf2009}. Denote by $\PW_{r}(\fa_{j\C}^*)^{W(j)} $ the space of $K_j$-invariant elements in $\PW_{r,W (j)}(\fa_{j\C}^*,C^\infty (B_j))$.

\begin{thm}[\cite{OlafssonWolf2009}] Let $\{X_n\}$ be as above. Then the restriction maps
\[R^k_n :\PW_{r}(\fa_{k\C}^*)^{W(k)} \to \PW_{r}(\fa_{n\C}^*)^{W(n)} \]
and
\[R^\infty :\varprojlim \PW_{r}(\fa_{n\C}^*)^{W(n)} \to \PW_{r}(\fa_{k\C}^*)^{W(k)} \]
are surjective. In particular,
$\varprojlim \PW_{r,W(n)}(\fa_{n\C}^*)\not=\{0\}$.
\end{thm}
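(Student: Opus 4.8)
The plan is to reduce the whole statement to the Euclidean result Theorem~\ref{OW1} and then to finish with a soft inverse-limit argument. The first step is to set up the dictionary between the ``$K$-invariant'' symmetric-space Paley--Wiener space and a Euclidean one. For $K$-invariant $f$ the Helgason Fourier transform $\widehat f$ is independent of $b\in B$, as recalled in the excerpt, so the defining conditions of $\PW_{r,W(n)}(\fa_{n\C}^*,C^\infty(B_n))$ collapse: every differential operator $D$ on $B_n$ of positive order annihilates a $b$-independent function, so only the family $\sup_{\lambda}(1+|\lambda|^2)^N e^{-r|\im\lambda|}|F(\lambda)|<\infty$ ($N\in\N$) survives, together with the $W(n)$-invariance $\cA(w,\lambda)F(\lambda)=F(w\lambda)$ (which for scalar $F$ is just $F(w\lambda)=F(\lambda)$). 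Fixing a linear identification $\fa_{n\C}^*\cong\C^{d_n}$, $d_n=\dim_\R\fa_n$, compatible with the inclusions $\fa_{n\C}^*\subseteq\fa_{k\C}^*$ and with the Weyl group actions, this identifies $\PW_r(\fa_{n\C}^*)^{W(n)}$, as a topological vector space, with the $W(n)$-invariant part of the Euclidean space $\PW_r(\C^{d_n})$ of the introduction; and under this identification $R^k_n$ is literally the restriction $F\mapsto F|_{\C^{d_n}}$ to a linear subspace, while $W(k)$-invariance of $F$ restricts to $W_n(k)|_{\fa_n}$-invariance on $\fa_{n\C}^*$.

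Next I would verify the hypotheses of Theorem~\ref{OW1}. Since by standing assumption none of the spaces $X_j$ has an irreducible factor with $\Psi_{1/2}$ of type $D$, Theorem~\ref{th-AdmExtG/K}(1) provides exactly what is required: $W_n(k)|_{\fa_n}=W(n)$ and the restriction of invariant polynomials $\C[\fa_k]^{W(k)}\to\C[\fa_n]^{W(n)}$ is surjective. Hence Theorem~\ref{OW1} applies (with $\C^{d_k},\C^{d_n}$ in the roles of $\C^k,\C^n$) and yields, for every $r>0$, the surjectivity of $R^k_n:\PW_r(\fa_{k\C}^*)^{W(k)}\to\PW_r(\fa_{n\C}^*)^{W(n)}$. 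That $R^k_n$ does land in $\PW_r(\fa_{n\C}^*)^{W(n)}$ is immediate: restricting a holomorphic function of exponential type $\le r$ to a linear subspace preserves the type, and the invariance passes down by the first step.

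For the projection $R^\infty$ I would invoke the standard fact that for an inverse system over $\N$ with surjective transition maps the canonical map from the inverse limit onto each stage is surjective. Concretely, given $F_k\in\PW_r(\fa_{k\C}^*)^{W(k)}$, put $F_n:=R^k_n(F_k)$ for $n\le k$ (the family is consistent because restriction is transitive, $R^k_n=R^j_n\circ R^k_j$ for $n\le j\le k$), and for $n>k$ choose inductively $F_n$ with $R^n_{n-1}(F_n)=F_{n-1}$, which is possible by the surjectivity of $R^n_{n-1}$ established above. The resulting family $(F_n)_n$ is an element of $\varprojlim\PW_r(\fa_{n\C}^*)^{W(n)}$ mapping to $F_k$, so $R^\infty$ is onto. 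Finally $\varprojlim\PW_{r,W(n)}(\fa_{n\C}^*)\neq\{0\}$ because $R^\infty$ surjects onto $\PW_r(\fa_{1\C}^*)^{W(1)}$, which is nonzero (for instance the Weyl-symmetrised Fourier transform of a nonzero radial bump function supported in a ball of radius $r$ lies in it).

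The only genuinely delicate point I anticipate is the first step: making the identification $\PW_r(\fa_{n\C}^*)^{W(n)}\cong\PW_r(\C^{d_n})^{W(n)}$ precise, topologically, and checking it is compatible with the entire tower of inclusions $\fa_{n\C}^*\subseteq\fa_{k\C}^*$ and the induced Weyl group actions, so that the abstract restriction maps $R^k_n$ really become the Euclidean ones. Once that dictionary is in place, surjectivity of $R^k_n$ is a direct citation of Theorem~\ref{OW1} (which itself rests on the theorems of Cowling and Rais), and the assertions about $R^\infty$ and about non-triviality of the projective limit are purely formal.
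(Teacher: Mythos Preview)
Your proposal is correct and follows exactly the approach the paper sets up: the paper presents Theorem~\ref{OW1} (the Euclidean surjectivity result, built on Cowling and Rais) and Theorem~\ref{th-AdmExtG/K} (verifying $W_n(k)|_{\fa_n}=W(n)$ and surjectivity of invariant-polynomial restriction outside type $D$) precisely so that the symmetric-space statement reduces to an application of Theorem~\ref{OW1}, followed by the standard countable inverse-limit lifting; the paper itself does not write out this reduction but cites \cite{OlafssonWolf2009}. Your identification of $\PW_r(\fa_{n\C}^*)^{W(n)}$ with a Euclidean $\PW_r(\C^{d_n})^{W(n)}$ and your concern about compatibility along the tower are appropriate, and once that dictionary is in place the rest is, as you say, purely formal.
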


We remark, that the corresponding projection $C^\infty_r(X_k)\to C^\infty_r(X_n)$ is more complicated than (\ref{eq-contProj}) because of the Harish-Chandra $c$-function and the fact that the spherical functions on $X_k$ do not necessarily project into spherical functions on $X_n$. But the projection are still given by the sequence
\[\text{Spherical Fourier transform on } X_k \rightarrow  \text{ restriction } R^k_n\]
\[{}\hbox to2.3cm{} \rightarrow  \text{ inverse spherical Fourier transform on } X_n\, .
\]
We denote this map by $S^k_n$.
Thus we have a commutative diagram
\begin{equation}\label{eq-commutativeDia}
{
\xymatrix{C^\infty_r(X_n)^{K_n}\ar[d]_{\cF_n}&
C^\infty_r(X_{n+1})^{K_{n+1}}
\ar[d]_{\cF_{n+1}}\ar[l]_(0.55){S^{n+1}_n}&\ar[l]_(0.4){S^{n+2}_{n+1}}
\cdots&
\varprojlim C^\infty_r(X_{n})^{K_{n}}\ar[d]_{\cF_\infty}
\\
\PW_{r}(\fa^*_{n\C})^{W(n)} &  \PW_{r}(\fa^*_{n+1\, \C})^{W(n+1)}
\ar[l]^(0.55){R^{n+1}_{n}}
&\ar[l]^(0.35){R^{n+2}_{n+1}}  \cdots & \varprojlim
\PW_{r}(\fa^*_{n\C} )^{W(n)} \\
}
}
\end{equation}
which we can interpret as an infinite dimensional Paley-Wiener type theorem for $X_\infty
=\varinjlim X_n$. The vertical maps are isomorphisms.

Similar results can also be derived for the compact case. To avoid the introduction of additional notation as well as needed preliminaries related to the representation theory of $U_n$ and $U_k$, we refer to \cite{OlafssonWolf2009} for the details. We only point out, that on the level on the Paley-Wiener spaces, we need to use the $\rho_n$-translated space $\{L_{\rho_n}F= F(\cdot - \rho_n)\:
F\in \PW_r (\fb^*_n,C^\infty (B))\}$ which is nothing elso but $\PW_{r,W (n)}(\fa^*_{n\C})$. Similarly, we will need the $\rho_n$-shifted Fourier transform, $f\mapsto L_{\rho_n}\widetilde{f}$. We can think of $\fg_n$ as a Lie algebra of matrices and use $(X,Y)_n=\Tr (XY)$ as a $K$-invariant inner product on $\fs_n$. Then $(X,Y)_k= (X,Y)_n$ if $Y_k$ propagates $Y_n$ and the injectivity radius stays constant. However, we have to replace $\Omega_n$ by a smaller convex set $\Omega^*_n$  such that for $k\ge n$.  We have $\Omega^*_k\cap \fa_n= \Omega^*_n$. An explicit definition of $\Omega^*_n$ is given in \cite{OlafssonWolf2009}. With those adjustments the commutative diagram (\ref{eq-commutativeDia}) stays valid for $r$ small enough. Here the vertical lines correspond to the holomorphic extension given by the spherical functions followed by a $\rho_n$-shift. Those maps are not necessarily isomorphisms any more because of the Carlson's theorem. In fact, the constant needed for that might tend to zero as $n\to \infty$.

\nocite{}
\bibliographystyle{amsplain}
\bibliography{ref}

\end{document}